\tikzstyle arrowstyle=[scale=1.2]
\tikzstyle directed=[postaction={decorate,decoration={markings,
    mark=at position .7 with {\arrow[arrowstyle]{stealth}}}}]
\newcolumntype{?}{!{\vrule width 3pt}}
\newtheorem{theorem}{Theorem}[section]
\newtheorem{corollary}[theorem]{Corollary}
\newtheorem{proposition}[theorem]{Proposition}
\newtheorem{conjecture}[theorem]{Conjecture}
\def\Z{\mathbb{Z}}
\DeclareMathOperator{\svt}{\mathbb{S}}
\DeclareMathOperator{\im}{\text{im}}
\DeclareMathOperator{\id}{\text{Id}}
\DeclareMathOperator{\pro}{\text{PRO}}
\DeclareMathOperator{\pc}{\text{PC}}
\definecolor{superlight}{rgb}{0.95,0.95,0.95}
\definecolor{LightGray}{rgb}{0.85,0.85,0.85}
\definecolor{Sage}{rgb}{0.75,0.75,0.75}
\title{Set-Valued Young Tableaux\\and Product-Coproduct Prographs}
\author[1]{Paul Drube}
\author[2]{Maxwell Krueger}
\author[3]{Ashley Skalsky}
\author[4]{Meghan Wren}
\affil[1]{Valparaiso University}
\affil[2]{Muhlenberg College}
\affil[3]{Minnesota State University Moorhead}
\affil[4]{SUNY Brockport}
\date{VERUM 2017, Valparaiso University\\Research funded by NSF Grant DMS-1559912}
\begin{document}

\maketitle

\begin{abstract}
Standard set-valued Young tableaux are a generalization of standard Young tableaux where cells can contain unordered sets of integers, with the added condition that every integer at position $(i,j)$ must be smaller that every integer at both $(i+1,j)$ and $(i,j+1)$.  In this paper, we explore properties of standard set-valued Young tableaux with three rows and a fixed number of integers in every cell of each row (referred to as set-valued tableaux with row-constant density).  Our primary focus is on standard set-valued Young tableaux with $1$ integer in each first-row cell, $k-1$ integers in each second-row cell, and $1$ integer in each third-row cell.  For rectangular shapes $\lambda=n^3$, such tableaux are placed in bijection with closed $k$-ary product-coproduct prographs: directed plane graphs that correspond to finite compositions involving a $k$-ary product operator and a $k$-ary coproduct operator.  That bijection is extended to three-row set-valued Young tableaux of non-rectangular and skew shape, and it is shown that a set-valued analogue of the Sch\"utzenberger involution on tableaux corresponds to $180$-degree rotation of the associated prographs.  As a set-valued analogue of the hook-length formula is currently lacking, we also present direct enumerations of three-row standard set-valued Young tableaux for a variety of row-constant densities and a small number of columns.  We then argue why the numbers of tableaux with the row-constant density $(1,k-1,1)$ should be interpreted as a one-parameter generalization of the three-dimensional Catalan numbers that mirrors the generalization of the (two-dimensional) Catalan numbers provided by the $k$-Catalan numbers. 
\end{abstract}

\section{Introduction: Standard Set-Valued Young Tableaux}
\label{sec: intro}

Consider a non-increasing sequence of positive integers $\lambda = (\lambda_1,\hdots,\lambda_m)$ that sum to $N$.  Following the English notation, a Young diagram $Y$ of shape $\lambda$ is a left-justified array of $N$ cells with $\lambda_i$ cells in the $i^{th}$ row from the top of the array.  Given a Young diagram of shape $\lambda$, a Young tableau of that shape is a bijection from the set of integers $[N]=\{1,\hdots,N\}$ to the cells of $Y$.  For a Young tableau to be a standard Young tableau, the entries in the tableau must increase left to right across each row and top to bottom down each column.  We denote the set of standard Young tableaux of shape $\lambda$ as $S(\lambda)$, and adopt the shorthand notation of $S(n^m)$ in the case of the $m$-row rectangular shape $\lambda = (n,\hdots,n)$.  For a thorough introduction to Young tableaux, see Fulton \cite{Fulton}.

The number of standard Young tableaux of arbitrary shape $\lambda$ may be directly calculated using the hook-length formula, as originally given by Frame, Robinson and Thrall \cite{FRT}.  A quick application of the hook-length formula to the case of $\lambda = (n,n)$ yields the well-known identity that $\vert S(n^2) \vert = C_n= \frac{(2n)!}{n! (n+1)!}$, the $n^{th}$ Catalan number.  Generalizing to the $d$-row rectangular case of $\lambda = (n,\hdots,n)$ similarly yields $\vert S(n^d) \vert = C_{d,n}$, where $C_{d,n} = \frac{(d-1)!(dn)!}{n! (n+1)! \hdots (n+d-1)!}$ is the $n^{th}$ $d$-dimensional Catalan number. 

Given an non-increasing sequence of positive integers $\lambda = (\lambda_1,\hdots,\lambda_{m_1})$ of $N_1$ and a non-increasing sequence of positive integers $\mu = (\mu_1,\hdots,\mu_{m_2})$ of $N_2$, where $0 \leq \mu_i \leq \lambda_i$ for all $i$, one can also define a skew Young diagram of shape $\lambda / \mu$ by removing the $\mu_i$ leftmost cells in the $i^{th}$ row of the Young diagram of shape $\lambda$, for all $1 \leq i \leq m$.  A skew Young tableau of shape $\lambda / \mu$ is a bijection from $[N_1 - N_2]$ to the cells of the skew Young diagram of shape $\lambda / \mu$.  Such a tableau is said to be a standard skew Young tableau if its entries increase left to right across each row and top to bottom down each column.  We denote the set of standard skew Young tableaux of shape $\lambda / \mu$ by $S(\lambda/\mu)$.

This paper is focused on a generalization of standard Young tableaux known as standard set-valued Young tableaux.  Consider a non-increasing sequence of positive integers $\lambda = (\lambda_1,\hdots,\lambda_m)$ and a sequence of positive integers $\rho = (\rho_1,\hdots,\rho_m)$ such that $\sum_{i=1}^m \lambda_i \rho_i = M$.  A \textbf{set-valued Young tableau} of shape $\lambda$ and (row-constant) \textbf{density} $\rho$ is a function from $[M]$ to the cells of the Young diagram $Y$ of shape $\lambda$ such that every cell in the $i^{th}$ row of $Y$ receives precisely $\rho_i$ integers.  The resulting tableau qualifies as a \textbf{standard set-valued Young tableau} if, for each cell $(i,j)$ of $Y$, every integer at position $(i,j)$ is smaller than every integer in the cells at $(i+1,j)$ and $(i,j+1)$.  In analogy with standard Young tableaux, we refer to these additional conditions as ``column-standardness" and ``row-standardness", respectively.  We denote the set of standard set-valued Young tableaux of shape $\lambda$ and density $\rho$ as $\svt(\lambda,\rho)$. See Figure \ref{fig: set-valued Young tableaux} for a collection of standard set-valued Young tableaux with $\lambda=(3,3)=3^2$ and $\rho=(2,1)$.  Given a skew Young diagram of shape $\lambda / \mu$, one may similarly define a \textbf{standard skew set-valued Young tableau} of shape $\lambda / \mu$ and (row-constant) density $\rho$.  We denote the set of such skew tableau by $\svt(\lambda / \mu, \rho)$.

\begin{figure}[ht!]
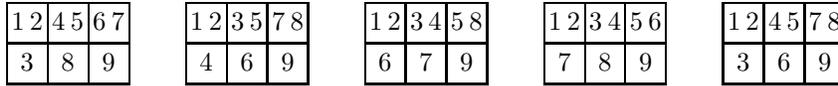

\centering
\begin{ytableau}
1 $\kern+2pt$ 2 & 4 $\kern+2pt$ 5 & 6 $\kern+2pt$ 7\\
3 & 8 & 9
\end{ytableau}
\hspace{0.2in}
\begin{ytableau}
1 $\kern+2pt$ 2 & 3 $\kern+2pt$ 5 & 7 $\kern+2pt$ 8\\
4 & 6 & 9
\end{ytableau}
\hspace{0.2in}
\begin{ytableau}
1 $\kern+2pt$ 2 & 3 $\kern+2pt$ 4 & 5 $\kern+2pt$ 8\\
6 & 7 & 9
\end{ytableau}
\hspace{0.2in}
\begin{ytableau}
1 $\kern+2pt$ 2 & 3 $\kern+2pt$ 4 & 5 $\kern+2pt$ 6\\
7 & 8 & 9
\end{ytableau}
\hspace{0.2in}
\begin{ytableau}
1 $\kern+2pt$ 2 & 4 $\kern+2pt$ 5 & 7 $\kern+2pt$ 8\\
3 & 6 & 9
\end{ytableau}
\caption{Five of the twelve elements of $\svt(3^2,\rho)$ with $\rho=(2,1)$.}
\label{fig: set-valued Young tableaux}
\end{figure}

Set-valued Young tableaux were originally introduced by Buch \cite{Buch} to study the $K$-theory of Grassmannians.  Heubach, Li and Mansour \cite{HLM} later provided standard set-valued Young tableaux with $\lambda = n^2$ and row-constant density $\rho=(k-1,1)$ as one of their many combinatorial interpretations of the $k$-Catalan numbers $C_n^k = \frac{(kn)!}{(kn-n+1)!n!}$.  That work was directly expanded upon by Drube \cite{Drube}, who used standard set-valued Young tableaux with two rows to provide new combinatorial interpretations of the Raney numbers, the rational Catalan numbers, and the solution to the generalized tennis ball problem.  For recent usages of set-valued Young tableaux in a more algebraic setting, see Reiner, Tenner and Young \cite{RTY} and Monical \cite{Monical}.

It is important to emphasize the current lack of a set-valued analogue to the hook-length formula.  This makes the enumeration of $\svt(\lambda,\rho)$ for arbitrary $\lambda$ and $\rho$ an extremely challenging problem, and comprehensive attempts at counting standard set-valued Young tableaux of arbitrary density $\rho$ have only been attempted for two-row shapes $\lambda = (a,b)$.  See Drube \cite{Drube} for calculations of $\vert S(\lambda,\rho) \vert$ in the two-row rectangular case, enumerations that corresponded to various generalizations of the (two-dimensional) Catalan numbers.

With the two-row case relatively well-understood, this paper presents the first thorough investigation of standard set-valued Young tableaux in the case of three-row shapes.  Much as various choices of $\rho$ allow $\vert \svt(n^2,\rho) \vert$ to correspond to various generalizations of the two-dimensional Catalan numbers, $\vert \svt(n^3,\rho) \vert$ and certain choices of $\rho$ will correspond to various generalizations of the three-dimensional Catalan numbers.  To our knowledge, all three-dimensional Catalan generalizations discussed in this paper have yet to appear anywhere in the literature.

\subsection{Outline of Paper}

This paper proceeds as follows.  In Section \ref{sec: prographs}, we introduce $k$-ary product-coproduct prographs, a class of directed plane graphs that naturally extend existing combinatorial interpretations for both the two- and three-dimensional Catalan numbers.  Much of Section \ref{sec: prographs} may be seen as an and formalization of the work of Borie \cite{Borie}.  This motivates our focus on the sets $\svt(n^3,\rho)$ with $\rho=(1,k-1,1)$, which are placed in bijection with $k$-ary product-coproduct prographs (Theorem \ref{thm: prograph vs tableaux bijection, rectangular}).  In Section \ref{sec: enumeration}, we focus upon the enumeration of these tableaux, yielding a family of integers $C_{3,n}^k$ that function as a three-dimensional analogue of the $k$-Catalan numbers.  Closed formulas for $C_{3,n}^k$ are derived for $n \leq 5$ (Propositions \ref{thm: n=2}, \ref{thm: n=3}, \ref{thm: n=4}, \ref{thm: n=5}) and a general calculus is introduced to tackle the general case (Proposition \ref{thm: general n recurrences}).  In Section \ref{sec: properties}, we further explore the bijection between our standard set-valued Young tableaux and $k$-ary product-coproduct prographs.  Appropriately generalized prographs are placed in bijection with various sets of (non-rectangular and skew) standard set-valued Young tableaux (Theorem \ref{thm: prograph vs tableaux bijection, non-rectangular}), and a 180-degree rotation of $k$-ary prographs is shown to correspond to a set-valued analogue of the Sch\"utzenberger involution on standard Young tableaux (Theorem \ref{thm: schutzenberger}).  Section \ref{sec: future directions} closes the paper with a series of more cursory discussions, including a suggestion of additional combinatorial interpretations for $C_{d,n}^k$ and a consideration of three- and four-row set-valued tableaux with densities other than $\rho=(1,k-1,1)$.

\section{$k$-ary Product-Coproduct Prographs and Set-Valued Tableaux}
\label{sec: prographs}

Let $\mathcal{T}_n^k$ denote the set of full $k$-ary trees with $kn+1$ vertices, drawn so that the root vertex lies at the bottom of the tree.  It is well-known that $\mathcal{T}_n^k$ is enumerated by the $k$-Catalan number $C_n^k = \frac{(kn)!}{(kn-n+1)!n!}$.  In the case of $k=2$, this prompts the well-studied bijection between $\mathcal{T}_n^2$ and $S(n^2)$ that associates entries in the top row of the tableau to left children and entries in the bottom row of the tableau to right children.

The bijection between $\mathcal{T}_n^2$ and $S(n^2)$ may be generalized to a bijection between $\mathcal{T}_n^k$ and standard set-valued Young tableaux $\svt(n^2,\rho)$ with row-constant density $\rho = (k-1,1)$.  As described by Heubach, Li and Mansour \cite{HLM}, this generalized bijection $\phi_k: \mathcal{T}_n^k \rightarrow \svt(n^2,\rho)$ is defined as below.  For an example of $\phi_k$, see Figure \ref{fig: k-ary tree vs tableau}.

\begin{enumerate}
\item For any $T \in \mathcal{T}_n^k$, label the edges of $T$ with the integers $\lbrace 1,\hdots,nk \rbrace$ according to a depth-left first search.
\item Place all integers that label rightmost-children of $T$ in the bottom row of $\phi(T) \in \svt(n^2,\rho)$, in increasing order from left to right.
\item Place all remaining integers from $\lbrace 1, \hdots, nk \rbrace$ in the top row of $\phi(T)$, in increasing order from left to right and ensuring that each cell in the top row receives precisely $k-1$ integers.
\end{enumerate}

\begin{figure}[ht!]
\centering
\begin{tikzpicture}
[scale=0.6,auto=left,every node/.style={circle,fill=black,inner sep=1.15pt}]
	\node[draw] (b) at (0,0) {};
	\node[draw] (1l) at (-1.5,1.5) {};
	\node[draw] (1m) at (0,1.5) {};
	\node[draw] (1r) at (1.5,1.5) {};
	\node[draw] (2l) at (-2.5,3) {};
	\node[draw] (2m) at (-1.5,3) {};
	\node[draw] (2r) at (-0.5,3) {};
	\node[draw] (3l) at (0.5,3) {};
	\node[draw] (3m) at (1.5,3) {};
	\node[draw] (3r) at (2.5,3) {};	
	\draw[thick] (b) to (1l);
	\draw[thick] (b) to (1m);
	\draw[thick] (b) to (1r);
	\draw[thick] (1l) to (2l);
	\draw[thick] (1l) to (2m);
	\draw[thick] (1l) to (2r);
	\draw[thick] (1r) to (3l);
	\draw[thick] (1r) to (3m);
	\draw[thick] (1r) to (3r);
	\node[fill=none] (1) at (-1.0,0.5) {1};
	\node[fill=none] (2) at (-2.3,2.1) {2};	
	\node[fill=none] (3) at (-1.8,2.6) {3};
	\node[fill=none] (4) at (-1.0,2.75) {4};
	\node[fill=none] (5) at (-0.25,1.0) {5};
	\node[fill=none] (6) at (0.65,1.1) {6};	
	\node[fill=none] (7) at (0.6,2.2) {7};
	\node[fill=none] (8) at (1.25,2.6) {8};
	\node[fill=none] (9) at (2,2.7) {9};	
\end{tikzpicture}
\hspace{.2in}
\raisebox{21pt}{\scalebox{2.5}{$\Leftrightarrow$}}
\hspace{.2in}
\raisebox{29pt}{\scalebox{1}{\begin{ytableau}
1 \kern+2pt 2 & 3 \kern+2pt 5 & 7 \kern+2pt 9 \\
4 & 6 & 9
\end{ytableau}}}
\caption{An example of the bijection $\phi_k: \mathcal{T}_n^k \rightarrow \svt(n^2,\rho)$ for $k=3$.}
\label{fig: k-ary tree vs tableau}
\end{figure}
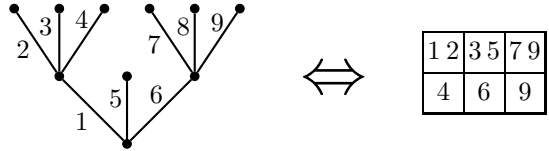

Following Borie \cite{Borie}, generalizing $\phi_k$ to three-row tableaux requires a consideration of prographs.  For any finite collection $G$ of formal operators, each of which is uniquely identified by its number of inputs and outputs, one may consider the set of all finite compositions that are freely constructed using elements of $G$ (as well as the identity operator $\id$).  Each of these compositions corresponds to a directed planted plane graph in which all edges are directed upward.  In these graphs, each application of a non-identity operator corresponds to a non-initial, non-terminal vertex whose vertical placement (when read from bottom to top) corresponds to the stage at which the operator appears in the composition.  See Figure \ref{fig: basic prograph example} for a quick example.  If one enforces a notion of equivalence for planted plane graphs with the added condition that all edges must maintain a strictly upward orientation, the resulting set $\pro_G$ is referred to as the (free) prographs generated by $G$.

Let $A$ denote a formal module.  If $G$ consists solely of an operator $\Delta_k : A \rightarrow A \otimes \cdots \otimes A$ with $1$ input and $k$ outputs (a non-coassociative $k$-ary coproduct), elements of $\pro_G$ are a directed variation of full $k$-ary trees where every edge has been directed upward and a single input edge has been added below the root vertex.  The subset of these prographs with precisely $n$ usages of $\Delta_k$ are in bijection $\mathcal{T}_n^k$.  For example, the $3$-ary tree on the left side of Figure \ref{fig: k-ary tree vs tableau} corresponds the prograph shown in Figure \ref{fig: basic prograph example}.

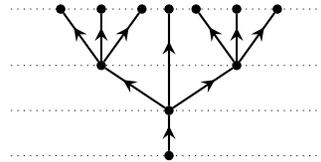
\begin{figure}[ht!]
\centering
\begin{tikzpicture}
[scale=0.6,auto=left,every node/.style={circle,fill=black,inner sep=1.1pt}]
	\node[draw] (bb) at (0,0) {};
	\node[draw] (b) at (0,1) {};
	\node[draw] (1l) at (-1.5,2) {};
	\node[draw] (1m) at (0,3.25) {};
	\node[draw] (1r) at (1.5,2) {};
	\node[draw] (2l) at (-2.4,3.25) {};
	\node[draw] (2m) at (-1.5,3.25) {};
	\node[draw] (2r) at (-0.6,3.25) {};
	\node[draw] (3l) at (0.6,3.25) {};
	\node[draw] (3m) at (1.5,3.25) {};
	\node[draw] (3r) at (2.4,3.25) {};	
	\draw[thick,directed] (bb) to (b);
	\draw[thick,directed] (b) to (1l);
	\draw[thick,directed] (b) to (1m);
	\draw[thick,directed] (b) to (1r);
	\draw[thick,directed] (1l) to (2l);
	\draw[thick,directed] (1l) to (2m);
	\draw[thick,directed] (1l) to (2r);
	\draw[thick,directed] (1r) to (3l);
	\draw[thick,directed] (1r) to (3m);
	\draw[thick,directed] (1r) to (3r);
	\draw[dotted] (-3.5,0) to (3.5,0) {};
	\draw[dotted] (-3.5,1) to (3.5,1) {};
	\draw[dotted] (-3.5,2) to (3.5,2) {};
	\draw[dotted] (-3.5,3.25) to (3.5,3.25) {};
\end{tikzpicture}
\caption{The prograph corresponding to the composition $(\Delta_3 \otimes \id \otimes \Delta_3) \circ \Delta_3$, where $\Delta_3$ is a $3$-ary coproduct.}
\label{fig: basic prograph example}
\end{figure}

Now consider the case where $G$ consists of a (non-coassociative) $k$-ary coproduct $\Delta_k: A \rightarrow A \otimes \cdots \otimes A$ with $1$ input and $k$ outputs, as well as a (non-associative) $k$-ary product $\mu_k: A \otimes \cdots \otimes A \rightarrow A$ with $k$ inputs and $1$ output. We refer to the resulting elements of $\pro_G$ as \textbf{$\mathbf{k}$-ary (product-coproduct) prographs}.  The subset of these prographs that have a single terminal vertex are known as \textbf{closed $\mathbf{k}$-ary (product-coproduct) prographs}.  As all prographs have a single initial vertex, all closed $k$-ary prographs must feature the same number of product and coproduct nodes.  We denote the set of all closed $k$-ary prographs with precisely $n$ products and $n$ product by $\pc^k(n)$.  See Figure \ref{fig: closed prograph example} for an illustration of $\pc^2(2)$.  Included in that figure is a representive from the equivalence class of compositions to which each prograph corresponds.

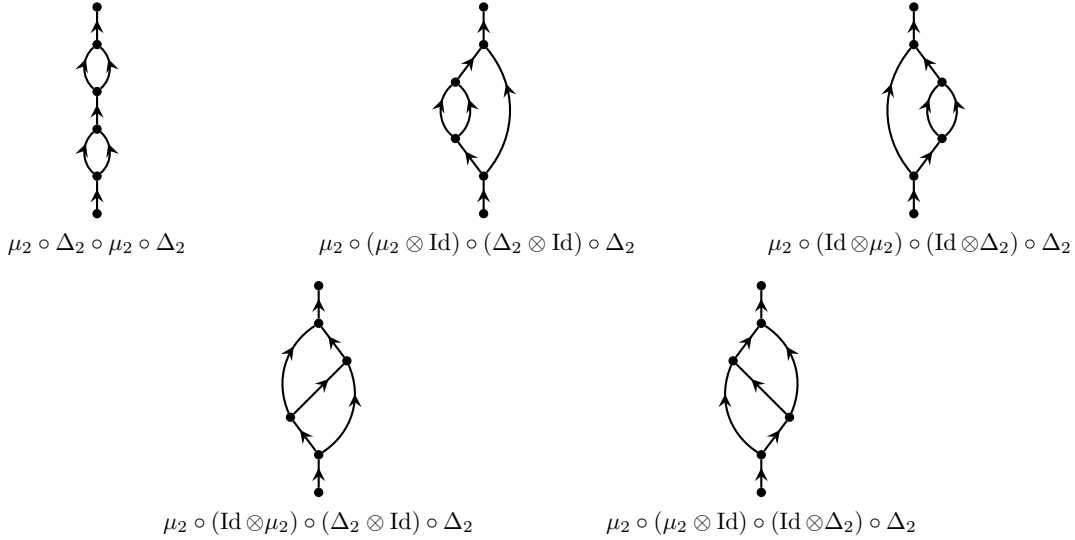
\begin{figure}[ht!]
\centering
\setlength{\tabcolsep}{25pt}
\begin{tabular}{c c c}
\begin{tikzpicture}
[scale=0.50,auto=left,every node/.style={circle,fill=black,inner sep=1.1pt}]
	\node[draw] (base) at (0,0) {};
	\node[draw] (a) at (0,1) {};
	\node[draw] (b) at (0,2.25) {};	
	\node[draw] (d) at (0,3.25) {};
	\node[draw] (e) at (0,4.5) {};
	\node[draw] (f) at (0,5.5) {};
	\draw[thick,directed] (base) to (a);
	\draw[thick,directed,bend left=50] (a) to (b);
	\draw[thick,directed,bend right=50] (a) to (b);
	\draw[thick,directed] (b) to (d);
    \draw[thick,directed,bend left=50] (d) to (e);
	\draw[thick,directed,bend right=50] (d) to (e);
	\draw[thick,directed] (e) to (f);
\end{tikzpicture}
&
\begin{tikzpicture}
[scale=0.50,auto=left,every node/.style={circle,fill=black,inner sep=1.1pt}]
	\node[draw] (base) at (0,0) {};
	\node[draw] (a) at (0,1) {};
	\node[draw] (b) at (-0.75,2) {};	
	\node[draw] (d) at (-0.75,3.5) {};
	\node[draw] (e) at (0,4.5) {};
	\node[draw] (f) at (0,5.5) {};
	\draw[thick,directed] (base) to (a);
	\draw[thick,directed] (a) to (b);
	\draw[thick,directed,bend left=50] (b) to (d);
	\draw[thick,directed,bend right=50] (b) to (d);
	\draw[thick,directed] (d) to (e);
	\draw[thick,directed,bend right=40] (a) to (e);
	\draw[thick,directed] (e) to (f);
\end{tikzpicture}
&
\begin{tikzpicture}
[scale=0.50,auto=left,every node/.style={circle,fill=black,inner sep=1.1pt}]
	\node[draw] (base) at (0,0) {};
	\node[draw] (a) at (0,1) {};
	\node[draw] (b) at (0.75,2) {};	
	\node[draw] (d) at (0.75,3.5) {};
	\node[draw] (e) at (0,4.5) {};
	\node[draw] (f) at (0,5.5) {};
	\draw[thick,directed] (base) to (a);
	\draw[thick,directed] (a) to (b);
	\draw[thick,directed,bend left=50] (b) to (d);
	\draw[thick,directed,bend right=50] (b) to (d);
	\draw[thick,directed] (d) to (e);
	\draw[thick,directed,bend left=40] (a) to (e);
	\draw[thick,directed] (e) to (f);
\end{tikzpicture}
\\
\scalebox{.9}{$\mu_2 \circ \Delta_2 \circ \mu_2 \circ \Delta_2$}
&
\scalebox{.9}{$\mu_2 \circ (\mu_2 \otimes \id) \circ (\Delta_2 \otimes \id) \circ \Delta_2$}
&
\scalebox{.9}{$\mu_2 \circ (\id \otimes \mu_2) \circ (\id \otimes \Delta_2) \circ \Delta_2$}
\end{tabular}

\vspace{.1in}

\begin{tabular}{c c}
\begin{tikzpicture}
[scale=0.50,auto=left,every node/.style={circle,fill=black,inner sep=1.1pt}]
	\node[draw] (base) at (0,0) {};
	\node[draw] (a) at (0,1) {};
	\node[draw] (b) at (-0.75,2) {};	
	\node[draw] (d) at (0.75,3.5) {};
	\node[draw] (e) at (0,4.5) {};
	\node[draw] (f) at (0,5.5) {};
	\draw[thick,directed] (base) to (a);
	\draw[thick,directed] (a) to (b);
	\draw[thick,directed] (b) to (d);
	\draw[thick,directed,bend left=40] (b) to (e);
	\draw[thick,directed,bend right=40] (a) to (d);
	\draw[thick,directed] (d) to (e);
	\draw[thick,directed] (e) to (f);
\end{tikzpicture}
&
\begin{tikzpicture}
[scale=0.50,auto=left,every node/.style={circle,fill=black,inner sep=1.1pt}]
	\node[draw] (base) at (0,0) {};
	\node[draw] (a) at (0,1) {};
	\node[draw] (b) at (0.75,2) {};	
	\node[draw] (d) at (-0.75,3.5) {};
	\node[draw] (e) at (0,4.5) {};
	\node[draw] (f) at (0,5.5) {};
	\draw[thick,directed] (base) to (a);
	\draw[thick,directed] (a) to (b);
	\draw[thick,directed] (b) to (d);
	\draw[thick,directed,bend right=40] (b) to (e);
	\draw[thick,directed,bend left=40] (a) to (d);
	\draw[thick,directed] (d) to (e);
	\draw[thick,directed] (e) to (f);
\end{tikzpicture}
\\
\scalebox{.9}{$\mu_2 \circ (\id \otimes \mu_2) \circ (\Delta_2 \otimes \id) \circ \Delta_2$}
&
\scalebox{.9}{$\mu_2 \circ (\mu_2 \otimes \id) \circ (\id \otimes \Delta_2) \circ \Delta_2$}
\end{tabular}
\caption{The set $\pc^2(2)$ of closed $2$-ary prographs with $2$ products and $2$ coproducts.}
\label{fig: closed prograph example}
\end{figure}

Borie \cite{Borie} argued that $\pc^2(n)$ is enumerated by the three-dimensional Catalan number $C_{3,n} = \frac{2 (3n)!}{n!(n+1)!(n+2)!}$.  This is accomplished by placing $\pc^2(n)$ in bijection with the three-row standard Young tableaux $S(n^3)$.

Our goal for the rest of this section is to generalize Borie's bijection to $\pc^k(n)$ for all $k \geq 2$, where the appropriate $k$-generalization of $S(n^3)$ is standard set-valued Young tableaux $\svt(n^3,\rho)$ with row-constant density $\rho=(1,k-1,1)$.  We begin by introducing an algorithm for labelling the edges of any $G \in \pc^k(n)$ that generalizes both the depth-left first labelling of $k$-ary trees and Borie's depth-left search for elements of $\pc^2(n)$.

\begin{enumerate}
\item Take any $G \in \pc^k(n)$, and begin by labelling the sole output of the initial node of $G$ with the integer $0$.
\item For each $0 \leq i < nk$, recursively define a subgraph $G_i$ of $G$ consisting solely of edges labelled by $\lbrace 1,\hdots,i \rbrace$.  Then let $V_i$ denote the subset of nodes from $G$ such that every input to that node lies in $G_i$ and at least one output from that node lies in $G - G_i$.
\item Identify the highest labelled edge from $G_i$ that terminates at an element $v$ of $V_i$ (this needn't be the edge labelled $i$).  Then label the leftmost unlabelled edge of that vertex $v$ with $i+1$ and return to Step \#2.
\end{enumerate}

See Figure \ref{fig: left-ascending search example} for an example of this procedure, which we henceforth refer to as our (generalized) depth-left first search.  Colloquially, the procedure may be described as ``staying as leftward as possible, with the restriction that all inputs to a node must be labelled before any output from that node may be labelled".  Also notice that this procedure directly generalizes to non-closed $k$-ary prographs: one merely needs to omit terminal nodes from the $V_i$ and repeat the recursive part of the algorithm until all terminal edges are labelled.

\begin{figure}[ht!]
\centering
\scalebox{0.85}{
\begin{tikzpicture}
[scale=.45,auto=left,every node/.style={circle,fill=black,inner sep=1.15pt}]
    \node[draw] (a) at (0,-1.5) {};
    \node[draw] (b) at (0,0) {};
    \node[draw] (c) at (1.5,1.5) {};
    \node[draw] (d) at (-1.5,1.5) {};
    \node[draw] (e) at (0,3) {};
    \node[draw] (f) at (0.75,4.5) {};
    \node[draw] (g) at (-0.75,6) {};
    \node[draw] (h) at (-0.75,7.5) {};
    \draw[thick,directed] (a) to (b);
    \draw[thick,directed] (b) to (c);
    \draw[thick,directed] (b) to (d);
    \draw[thick,directed] (c) to (e);
    \draw[thick,directed] (d) to (e);
    \draw[thick,directed,bend left=20] (d) to (g);
    \draw[thick,directed] (f) to (g);
    \draw[thick,directed,bend right=20] (c) to (f);
    \draw[thick,directed] (g) to (h);
    \draw[thick,directed,bend left=10] (e) to (f);
    \node[fill=none] (0) at (-0.4,-0.8) {0};
\end{tikzpicture}
\hspace{0.2in}
\raisebox{48pt}{\scalebox{3}{$\Rightarrow$}}
\hspace{0.1in}
\begin{tikzpicture}
[scale=.45,auto=left,every node/.style={circle,fill=black,inner sep=1.15pt}]
    \node[draw] (a) at (0,-1.5) {};
    \node[draw] (b) at (0,0) {};
    \node[draw] (c) at (1.5,1.5) {};
    \node[draw] (d) at (-1.5,1.5) {};
    \node[draw] (e) at (0,3) {};
    \node[draw] (f) at (0.75,4.5) {};
    \node[draw] (g) at (-0.75,6) {};
    \node[draw] (h) at (-0.75,7.5) {};
    \draw[thick,directed] (a) to (b);
    \draw[thick,directed] (b) to (c);
    \draw[thick,directed] (b) to (d);
    \draw[thick,directed] (c) to (e);
    \draw[thick,directed] (d) to (e);
    \draw[thick,directed,bend left=20] (d) to (g);
    \draw[thick,directed] (f) to (g);
    \draw[thick,directed,bend right=20] (c) to (f);
    \draw[thick,directed] (g) to (h);
    \draw[thick,directed,bend left=10] (e) to (f);
    \node[fill=none] (0) at (-0.4,-0.8) {0};
    \node[fill=none] (1) at (-1.15,0.5) {1};
    \node[fill=none] (2) at (-2.05,3.75) {2};
    \node[fill=none] (3) at (-0.95,2.7) {3};
\end{tikzpicture}
\hspace{0.2in}
\raisebox{48pt}{\scalebox{3}{$\Rightarrow$}}
\hspace{0.1in}
\begin{tikzpicture}
[scale=.45,auto=left,every node/.style={circle,fill=black,inner sep=1.15pt}]
    \node[draw] (a) at (0,-1.5) {};
    \node[draw] (b) at (0,0) {};
    \node[draw] (c) at (1.5,1.5) {};
    \node[draw] (d) at (-1.5,1.5) {};
    \node[draw] (e) at (0,3) {};
    \node[draw] (f) at (0.75,4.5) {};
    \node[draw] (g) at (-0.75,6) {};
    \node[draw] (h) at (-0.75,7.5) {};
    \draw[thick,directed] (a) to (b);
    \draw[thick,directed] (b) to (c);
    \draw[thick,directed] (b) to (d);
    \draw[thick,directed] (c) to (e);
    \draw[thick,directed] (d) to (e);
    \draw[thick,directed,bend left=20] (d) to (g);
    \draw[thick,directed] (f) to (g);
    \draw[thick,directed,bend right=20] (c) to (f);
    \draw[thick,directed] (g) to (h);
    \draw[thick,directed,bend left=10] (e) to (f);
    \node[fill=none] (0) at (-0.4,-0.8) {0};
    \node[fill=none] (1) at (-1.15,0.5) {1};
    \node[fill=none] (2) at (-2.05,3.75) {2};
    \node[fill=none] (3) at (-0.95,2.7) {3};
    \node[fill=none] (4) at (1.05,0.45) {4};
    \node[fill=none] (5) at (0.9,2.7) {5};
    \node[fill=none] (6) at (-0.1,4) {6};
\end{tikzpicture}
\hspace{0.2in}
\raisebox{48pt}{\scalebox{3}{$\Rightarrow$}}
\hspace{0.1in}
\begin{tikzpicture}
[scale=.45,auto=left,every node/.style={circle,fill=black,inner sep=1.15pt}]
    \node[draw] (a) at (0,-1.5) {};
    \node[draw] (b) at (0,0) {};
    \node[draw] (c) at (1.5,1.5) {};
    \node[draw] (d) at (-1.5,1.5) {};
    \node[draw] (e) at (0,3) {};
    \node[draw] (f) at (0.75,4.5) {};
    \node[draw] (g) at (-0.75,6) {};
    \node[draw] (h) at (-0.75,7.5) {};
    \draw[thick,directed] (a) to (b);
    \draw[thick,directed] (b) to (c);
    \draw[thick,directed] (b) to (d);
    \draw[thick,directed] (c) to (e);
    \draw[thick,directed] (d) to (e);
    \draw[thick,directed,bend left=20] (d) to (g);
    \draw[thick,directed] (f) to (g);
    \draw[thick,directed,bend right=20] (c) to (f);
    \draw[thick,directed] (g) to (h);
    \draw[thick,directed,bend left=10] (e) to (f);
    \node[fill=none] (0) at (-0.4,-0.8) {0};
    \node[fill=none] (1) at (-1.1,0.5) {1};
    \node[fill=none] (2) at (-2.05,3.75) {2};
    \node[fill=none] (3) at (-0.95,2.7) {3};
    \node[fill=none] (4) at (1.05,0.45) {4};
    \node[fill=none] (5) at (0.9,2.7) {5};
    \node[fill=none] (6) at (-0.1,4) {6};
    \node[fill=none] (7) at (1.8,3.25) {7};
    \node[fill=none] (8) at (0.3,5.6) {8};
    \node[fill=none] (9) at (-1.15,6.75) {9};
\end{tikzpicture}}
\caption{An example of our generalized depth-left first search, applied to an element of $\pc^2(3)$.}
\label{fig: left-ascending search example}
\end{figure}
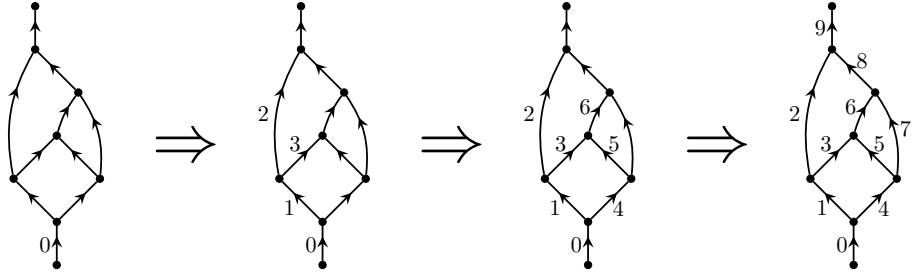

We are now ready to place $\pc^k(n)$ in bijection with an appropriate collection of standard set-valued Young tableaux.  Observe that Theorem \ref{thm: prograph vs tableaux bijection, rectangular} directly recovers the result of Borie \cite{Borie} in the case of $k=2$.

\begin{theorem}
Fix $n \geq 1$ and $k \geq 2$.  Then $\vert \pc^k(n) \vert = \vert \svt(\lambda,\rho) \vert$ for $\lambda = n^3$ and $\rho=(1,k-1,1)$.
\label{thm: prograph vs tableaux bijection, rectangular}
\end{theorem}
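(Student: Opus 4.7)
The plan is to construct an explicit bijection $\psi_k \colon \pc^k(n) \to \svt(n^3,(1,k-1,1))$ that simultaneously extends the Heubach--Li--Mansour map $\phi_k$ on two-row tableaux and Borie's $k=2$ bijection. Given $G \in \pc^k(n)$, apply the generalized depth-left first search described above to label the $n(k+1)+1$ edges of $G$ with the integers $0,1,\ldots,n(k+1)$. The initial edge always receives the label $0$ and is discarded. The remaining $n(k+1)$ labelled edges partition into three natural classes whose sizes match the cell-counts of $\svt(n^3,(1,k-1,1))$: (i) the $n$ leftmost output edges of the coproducts, (ii) the $n(k-1)$ remaining output edges of the coproducts, and (iii) the $n$ output edges of the products. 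I build $\psi_k(G)$ by inserting the labels of class (i) into the top row, those of class (ii) into the middle row, and those of class (iii) into the bottom row, filling each row in increasing order from left to right with the prescribed density.

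Next I verify that $\psi_k(G)$ actually lies in $\svt(n^3,(1,k-1,1))$. Row-standardness is automatic from the increasing-order filling, so the task reduces to column-standardness. Writing $c_i(m)$ for the number of class-$(i)$ labels among $\{1,\ldots,m\}$, column-standardness is equivalent to the prefix inequalities $c_2(m)\leq(k-1)\,c_1(m)$ and $(k-1)\,c_3(m)\leq c_2(m)$ for every $m$. The first inequality is a direct consequence of the search rule: within any single coproduct the leftmost output is always labelled before its $k-1$ siblings, so each unit added to $c_1$ strictly precedes the at most $k-1$ units added to $c_2$ by the same coproduct. The second inequality is more subtle; I would argue by induction on the number of products whose output has been labelled, using the fact that a product's output cannot be labelled until all $k$ of its inputs have been processed, so the $k$ input edges contribute, directly or recursively through earlier products, enough class-(ii) labels to compensate for the $(k-1)$-sized drop caused by labelling the new class-(iii) edge.

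To complete the bijection I produce an explicit inverse. Given $T \in \svt(n^3,(1,k-1,1))$, read its entries in the order $1,2,\ldots,n(k+1)$ while maintaining a left-to-right list of currently ``open'' output edges. A label appearing in the top row opens a new coproduct, appending its leftmost output to the list; a label in the middle row attaches the next output of the most recently opened coproduct that still has unattached outputs; a label in the bottom row closes a product by consuming the rightmost $k$ open edges as its inputs and appending the product's single output to the list. The two prefix inequalities verified above are precisely what is needed to guarantee that each of these operations is well-defined. A final check that running the generalized depth-left first search on the reconstructed prograph recovers the original labels of $T$ shows that the two constructions are mutually inverse.

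The principal obstacle is the column inequality $(k-1)\,c_3(m) \leq c_2(m)$, which is a global balance condition coupling possibly topologically distant products and coproducts of $G$. A clean treatment assigns each labelled edge a potential ($0$ for class (i), $+1$ for class (ii), $-(k-1)$ for class (iii)) and shows inductively that the running sum of potentials stays nonnegative during the depth-left first search; the inductive step requires identifying, for each product about to be closed off, a matching of its $k$ inputs against earlier subgraphs whose accumulated potential is already large enough to absorb the drop. On the inverse side, the same accounting underlies the verification that the greedy ``consume the rightmost $k$ open edges'' rule yields a planar prograph whose edges can be drawn without crossings.
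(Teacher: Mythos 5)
Your proposal is correct and follows essentially the same route as the paper's proof: the same generalized depth-left-first labelling with leftmost coproduct children, remaining coproduct children, and product outputs sent to the top, middle, and bottom rows, a column-standardness argument whose prefix/potential inequalities are the paper's count $\alpha_2 \geq (k-1)j$ in disguise, and the same entry-by-entry greedy reconstruction for the inverse, with standardness of $T$ guaranteeing each step is possible. The only detail your open-edge-list description leaves implicit is where a top-row label $i$ grafts its new coproduct, namely onto the edge labelled $i-1$ (equivalently the rightmost open edge), which is exactly how the paper's inverse map $\Phi_2$ is specified.
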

\begin{proof}
We provide a pair of well-defined functions $\Phi : \pc^k(n) \rightarrow \mathbb{S}(n^3,\rho)$, $\Phi_2: \mathbb{S}(n^3,\rho) \rightarrow \pc^k(n)$ and then show that $\Phi_2 = \Phi^{-1}$.  Our first map $\Phi: \pc^k(n) \rightarrow \svt(n^3,\rho)$ is defined as below.  See Figure \ref{fig: prograph vs tableaux example, rectangular} for an example.
\begin{enumerate}
\item For any $G \in \pc^k(n)$, label the edges of $G$ according to our depth-left first search.
\item Place integers that label leftmost coproduct children of $G$ in the top row of $\Phi(G) \in \svt(n^3,\rho)$, in increasing order from left to right.
\item Place integers that label all remaining coproduct children of $G$ along the middle row of $\Phi(G)$, in increasing order from left to right and ensuring that each cell in the middle row receives precisely $k-1$ integers.
\item Place integers corresponding to product children of $G$ along the bottom row of $\Phi(G)$, in increasing order from left to right.
\end{enumerate}

Notice that the initial input label of $0$ is ignored in this procedure.  As $\Phi(G)$ is row-standard by construction, to show that $\Phi$ is a well-defined map into $\svt(n^3,\rho)$ we merely need to argue that $\Phi(G)$ is column-standard.  Begin by noticing that our depth-left first search ensures that the leftmost child of a given coproduct node will always be labelled prior to the $k-1$ non-leftmost children of that same node.  This implies that every entry in the middle row of $\Phi(G)$ must be larger than the entry in the top row of the same column.

Now assume that precisely $\alpha_1$ leftmost coproduct children and $\alpha_2$ other coproduct children have been labelled prior to the labelling of the $j^{th}$ product child of $G$.  In order for the $j^{th}$ product child to receive the next label, there must have been at least $k$ previously labelled edges terminating at a node with an unlabelled output.  Among the $\alpha_1 + \alpha_2 + (j-1) + 1$ edges that were labelled prior to the labelling of the $j^{th}$ product child (initial $0$ edge included), precisely $k$ edges terminate at each of the $j-1$ product nodes with a previously labelled output, while $1$ edge terminates at each of the $\alpha_1$ coproduct nodes with a previously labelled leftmost output.  This leaves $\alpha_1 + \alpha_2 + (j-1) + 1 - k(j-1) - \alpha_1 = \alpha_2 - (k-1)j + k$ labelled edges that could lead into a product node with an unlabelled output.  Enforcing $\alpha_2 - (k-1)j + k \geq k$ gives $\alpha_2 \geq (k-1)j$, ensuring that all entries in the middle row of the $j^{th}$ column are smaller than the entry in the bottom row of the $j^{th}$ column.  It follows that $\Phi(G)$ is in fact column-standard and hence that $\Phi$ is well-defined.

For our second map $\Phi_2: \svt(n^3,\rho) \rightarrow \pc^k(n)$, we recursively ``build up" an edge-labelled prograph by working through $T \in \svt(n^3,\rho)$ one entry at a time, as described below.

\begin{enumerate}
\item For any $T \in \svt(n^3,\rho)$, begin by placing an initial input edge labelled $0$.  Then recursively consider each entry $1 \leq i \leq (k+1)n$ in numerical order.
\item If $i$ lies in the top row of $T$, place a coproduct node whose input is the edge labelled $i-1$.  Then label the leftmost child of that coproduct with $i$.
\item If $i$ is in the middle row of $T$, follow the depth-left first search through the partially constructed graph from the edge labelled $i-1$.  Then label the first unlabelled edge you encounter with the integer $i$.
\item If $i$ is in the bottom row of $T$, place a product node whose rightmost input is the edge labelled $i-1$ and whose remaining inputs are the $k-1$ nearest terminal edges immediately to the left of the edge labelled $i-1$.  Then label the output of that product $i$.
\end{enumerate}

The well-definedness of $\Phi_2$ depends upon whether the actions described above are possible at every step.  In particular, there must exist a rightward unlabelled edge when applying Step \#3, and there must be enough leftward free edges (all previously labelled) when adding the product node in Step \#4.

Begin by noting that, in the procedure that constructs $\Phi_2(T)$, leftmost coproduct children and product children are labelled as soon as they are placed.  This means that unlabelled terminal edges at any intermediate step must correspond to non-leftmost coproduct children, and hence that all edges labelled in Step \#3 must be non-leftmost coproduct children.  Also notice that the edge labelled $i$ immediately serves as an input for a new product or coproduct node unless $i+1$ lies in the middle row of $T$.  As this case involves an application of the depth-left first search, when it is initially placed $i+1$ is always the rightmost terminal edge in our partially constructed prograph.

So assume that the entry $i$ lies in the cell $(2,j)$ of $T$, and that $i$ is larger than precisely $x$ other integers in that cell ($0 \leq x \leq k-2$).  Row- and column-standardness of $T$ guarantees that at least $j$ coproducts have already been placed prior to this step, and that $(k-1)(j-1) + x$ of the non-leftmost children from those coproducts have already been labelled.  This means there are at least $(k-1)j - (k-1)(j-1) - x = k-1- x \geq 1$ unlabelled non-leftmost coproduct children at this step.  Because all non-leftmost coproduct children are labelled according to our depth-left first search, all of these unlabelled coproduct edges lie to the left of the edge labelled $i-1$.  Thus the operation of Step \#3 is always possible.

Now assume that $i$ lies in the cell $(3,j)$ of $T$.  Row- and column-standardness of $T$ guarantee that at least $j$ coproducts and precisely $j-1$ products have already been placed at this point in the procedure, with at least $kj$ coproduct children and precisely $j-1$ product children having been labelled.  As $j-1$ labelled inputs are needed for the placement of each coproduct, this means that there are at least $kj - (k-1)(j-1) - (j-1) = k$ labelled free edges when $i$ is the active integer.  Via preceding comments, the edge labelled $i-1$ is the rightmost of these free edges.  Thus the operation of Step \#4 is always possible, and we may conclude that $\Phi_2$ is well-defined.

It is only left to show that $\Phi_2 = \Phi^{-1}$.  We demonstrate that $\Phi_2 \circ \Phi (G) = G$ for any $G \in \pc^k(n)$, and that $\Phi \circ \Phi_2(T)$ for any $T \in \svt(n^3,\rho)$.

To show $\Phi_2 \circ \Phi(G) = G$ for any $G \in \pc^k(n)$, we inductively work through the edges of $G$ in the order of the depth-left first search.  For $i=0$, $G$ and $\Phi_2 \circ \Phi (G)$ both feature a single input edge labelled with $i$.  For any $1 \leq i \leq n(k+1)$, assume that $G$ and $\Phi_2 \circ \Phi(G)$ feature identical sub-prographs (not necessarily closed) corresponding to the edges labelled $\lbrace 0,\hdots,i-1 \rbrace$.  There are the three possible scenarios for the edge labelled $i$.

\begin{enumerate}
\item If $i-1$ labels the input to a coproduct node in $G$, the edge labelled $i$ must be the leftmost output of that same coproduct.  This implies that $i$ lies in the top row of $\Phi(G)$ and hence that $\Phi_2 \circ \Phi(G)$ also features a coproduct with input $i-1$ and leftmost output $i$.
\item If $i-1$ labels the rightmost input to a product node in $G$, the edge labelled $i$ in $G$ is always the next (on the right) input to that same product.  That means that $i$ lies in the middle row of $\Phi(G)$ and that the edge labelled with $i$ in $\Phi_2 \circ \Phi(G)$ is determined via a depth-left first search from the edge labelled $i-1$.  This results in the next (on the right) input to that same product being labelled $i$ in $\Phi_2 \circ \Phi(G)$.
\item If $i-1$ labels the rightmost input to a product node in $G$, the edge labelled $i$ in $G$ is necessarily the output of that product.  This implies that $i$ lies in the bottom row of $\Phi(G)$ and thus that $i$ also labels a product output in $\Phi_2 \circ \Phi(G)$ whose rightmost input is labelled $i-1$.
\end{enumerate}

As all three options lead to an identical placement of the edge labelled with $i$, we conclude $\Phi_2 \circ \Phi(G) = G$.

To show that $\Phi \circ \Phi_2 (T) = T$ for any $T \in \svt(n^3,\rho)$, we inductively work through the entries of $T$.  For $i=1$, $T$ and $\Phi \circ \Phi_2(T)$ both feature $i$ in the top-left corner.  For $2 \leq i \leq n(k+1)$, assume that $T$ and $\Phi \circ \Phi_2(T)$ feature identical subtableau corresponding to the entries $\lbrace 1,\hdots,i-1 \rbrace$.  There are once again three possibilities for $i$:

\begin{enumerate}
\item If $i$ lies in the top row of $T$, $i$ labels a leftmost child of a coproduct node in $\Phi_2(T)$ whose input is labelled $i-1$.  Thus $i$ lies in the top row of $\Phi \circ \Phi_2(T)$.
\item If $i$ lies in the middle row of $T$, via earlier comments we know that $i$ will always label a non-leftmost coproduct child in $\Phi_2(T)$.  It follows that $i$ also lies in the middle row of $\Phi \circ \Phi_2(T)$
\item If $i$ lies in the bottom row of $T$, $i$ labels a product output in $\Phi_2(T)$ and hence $i$ also lies in the bottom row of $\Phi \circ \Phi_2(T)$.
\end{enumerate}

As all three cases lead to identical placement of $i$ in the relevant tableaux, we conclude $\Phi \circ \Phi_2(T) = T$.
\end{proof}

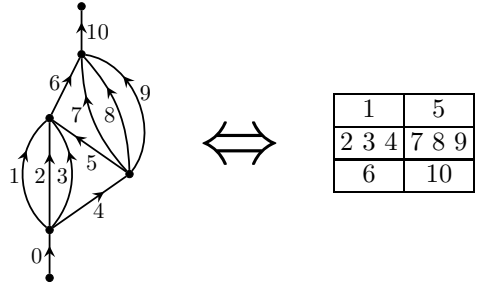
\begin{figure}[ht!]
\centering
\scalebox{.85}{\begin{tikzpicture}
[scale=1,auto=left,every node/.style={circle,fill=black,inner sep=1.15pt}]
    \node[draw] (a) at (0,.25) {};
    \node[draw] (b) at (0,1) {};
    \node[draw] (c) at (1.25,1+1.75/2) {};
    \node[draw] (d) at (0,2.75) {};
    \node[draw] (e) at (0.5,3.75) {};
    \node[draw] (f) at (0.5,4.5) {};
    \draw[thick,directed] (a) to (b);
    \draw[thick,directed] (b) to (c);
    \draw[thick,directed,bend right=40] (b) to (d);
    \draw[thick,directed] (b) to (d);
    \draw[thick,directed,bend left=50] (b) to (d);
    \draw[thick,directed] (c) to (d);
    \draw[thick,directed,bend right=60] (c) to (e);
    \draw[thick,directed,bend right=20] (c) to (e);
    \draw[thick,directed,bend left=20] (c) to (e);
    \draw[thick,directed] (d) to (e);
    \draw[thick,directed] (e) to (f);
    \node[fill=none] (0) at (-.2,.6) {0};
    \node[fill=none] (1) at (-.55,1.85) {1};
    \node[fill=none] (2) at (-0.15,1.85) {2};
    \node[fill=none] (3) at (.2,1.85) {3};
    \node[fill=none] (4) at (.77,1.3) {4};
    \node[fill=none] (5) at (0.66,2.05) {5};
    \node[fill=none] (6) at (.08,3.3) {6};
    \node[fill=none] (7) at (0.42,2.8) {7};
    \node[fill=none] (8) at (.94,2.8) {8};
    \node[fill=none] (9) at (1.5,3.15) {9};
    \node[fill=none] (10) at (.75,4.1) {10};
\end{tikzpicture}}
\hspace{0.1in}
\raisebox{45pt}{\scalebox{3}{$ \Leftrightarrow$}}
\hspace{0.1 in}
\raisebox{50pt}{
\setlength{\tabcolsep}{2.2pt}
\begin{tabular}{|c|c|}
	\hline
	1 & 5 \\ \hline
	2 3 4 & 7 8 9 \\ \hline
	6 & 10 \\ \hline
\end{tabular}}
\caption{An example of the bijection $\Phi: \pc^k(n) \rightarrow \svt(n^3,\rho)$ for $n=2$ and $k=4$.}
\label{fig: prograph vs tableaux example, rectangular}
\end{figure}

\section{Enumerating $\svt(n^3,\rho)$ for $\rho = (1,k-1,1)$}
\label{sec: enumeration}

Theorem \ref{thm: prograph vs tableaux bijection, rectangular} suggests that $\svt(n^3,\rho)$ with $\rho=(1,k-1,1)$ generalizes $S(n^3)$ in a manner similar to how $\svt(n^2,\rho')$ with $\rho'=(k-1,1)$ generalizes $S(n^2)$.  As the $\svt(n^2,\rho')$ are enumerated by the $k$-Catalan numbers $C^k_n$, we henceforth refer to the cardinalities $\vert \svt(n^3,\rho) \vert = C^k_{3,n}$ as the \textbf{three-dimensional $k$-Catalan numbers}.

The purpose of this section is to develop closed formulas for $C^k_{3,n}$.  Sadly, developing such a formula or deriving a multivariate generating function for arbitrary $n \geq 1$, $k \geq 1$ do not appear to be tractable problems.  As such, we restrict our attention to cases of small $n$.  See Table \ref{tab: 1,k-1,1} of Appendix \ref{sec: appendix} for a table of known values of $C^k_{3,n}$, which combines the explicit results of this section with computer calculations performed in Java.

In all that follows, notice that the ``degenerate" $k=1$ case corresponds to three-row tableaux with empty cells across their middle row.  This means that the $k=1$ enumerations reduce to pre-existing results about two-row tableaux: that $C_{3,n}^1 = \vert \svt(n^3,\rho) \vert = \vert S(n^2) \vert = C_n$ for all $n \geq 1$ with $\rho = (1,0,1)$.

For all of our enumerations we recursively place $\svt(\lambda,\rho)$ in bijection with a collection of sets $\bigcup \svt(\lambda_i,\rho)$ of strictly smaller shape yet equivalent density.  Our technique is similar to pre-existing proofs for non-set-valued tableaux where the sub-shapes $\lambda_i$ are determined via the removal of lower-right corners, corresponding to possible locations of the largest possible entry in a tableau of shape $\lambda$.  The difference here is that we never remove entries from a cell without eliminating all entries in that cell.  If the removed cell contains entries other than the largest entry in the tableau, this necessitates that we account for the ordering of those smaller entries relative to integers appearing elsewhere in the tableau.

Before proceeding, observe that $\vert S(\lambda,\rho) \vert$ is easily calculable whenever $\lambda = (n,1,\hdots,1)$ is ``hook-shaped".  In this case, one merely needs to count the ways of partitioning entries between the rightward and downward ``legs", giving an enumeration in terms of a single binomial coefficient $\vert S(\lambda,\rho) \vert = \binom{a}{b}$.  See Figure \ref{fig: hook-shaped enumeration} for examples.

For the rest of this section, an unfilled Young diagram of shape $\lambda$ is used to denote the cardinality $\vert \svt(\lambda,\rho) \vert$, assuming $\rho = (1,k-1,1)$.

\begin{figure}[ht!]
\centering
\ytableausetup{boxsize=1em}
$$\raisebox{8pt}{\ydiagram{2,1,1}} \ = \ \binom{k+1}{1} \hspace{.5in} \raisebox{8pt}{\ydiagram{4,1,1}} \ = \ \binom{k+3}{3}$$
\caption{Cardinalities $\vert S(\lambda,\rho) \vert$ for $\rho = (1,k-1,1)$ and several hook-shapes $\lambda$.  As $1$ must lie at position $(1,1)$, one merely needs to determine which of $\lbrace 2,3,\hdots \rbrace$ lie in the remaining cells of the top row.}
\label{fig: hook-shaped enumeration}
\end{figure}

\begin{proposition}
\label{thm: n=2}
Let $\rho = (1,k-1,1)$.  For any $k \geq 1$, $C_{3,2}^k = \vert \svt(2^3,\rho) \vert = k^2 + 1$.
\end{proposition}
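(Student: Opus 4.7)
\medskip

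\noindent \textit{Proof plan.} The plan is to carry out a direct case analysis, exploiting the fact that the $3 \times 2$ shape contains only six cells and its extreme entries are forced. The starting observation is that row- and column-standardness pin the entry $1$ to position $(1,1)$ and the entry $2k+2$ to position $(3,2)$, so a tableau $T \in \svt(2^3,\rho)$ is determined by four remaining pieces of data: the entry $a$ at $(1,2)$, the entry $c$ at $(3,1)$, the $(k-1)$-element set $A$ at $(2,1)$, and the $(k-1)$-element set $B$ at $(2,2)$. The pair $\{a,c\}$ together with $A$ and $B$ partition $\{2,3,\dots,2k+1\}$, and the remaining standardness conditions collapse into three inequalities: $a < \min(B)$, $\max(A) < \min(B)$, and $\max(A) < c$ (with empty $\min$ or $\max$ treated vacuously when $k=1$).

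I would then split the count on the relative order of $a$ and $c$; equality is impossible since they sit in disjoint cells. In the case $a < c$, the elements of $\{2,\dots,2k+1\}\setminus\{a,c\}$ that are less than $a$ must lie in $A$ (they fail the $>a$ requirement for $B$), the elements greater than $c$ must lie in $B$, and the remaining $c-a-1$ elements strictly between $a$ and $c$ split as an initial segment into $A$ and the rest into $B$ in order to preserve $\max(A)<\min(B)$. The cardinality requirement $|A|=k-1$ then forces this split uniquely, and it is feasible precisely when $a \leq k+1$ and $c \geq k+2$. Letting $a$ range over $\{2,\dots,k+1\}$ and $c$ over $\{k+2,\dots,2k+1\}$, this case contributes $k \cdot k = k^2$ tableaux.

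In the case $a > c$, any integer $x$ with $c < x < a$ would be barred from both $A$ (which requires $x<c$) and $B$ (which requires $x>a$), so $a = c+1$ is forced. Then $A$ must equal $\{2,\dots,c-1\}$ and $B$ must equal $\{a+1,\dots,2k+1\}$, yielding $|A|=c-2$ and $|B|=2k-c$; setting both equal to $k-1$ pins down $c = k+1$ and $a = k+2$, for exactly one tableau. Summing the two cases gives $k^2+1$.

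The step I expect to be slightly delicate--though still essentially routine--is the feasibility and uniqueness argument in the $a<c$ case, which requires carefully tracking the forced placements of small and large elements against the density constraint $|A|=|B|=k-1$. The boundary cases $k=1$ and $k=2$ will serve as useful sanity checks against the known counts $|\svt(2^3,(1,0,1))| = |S(2^2)| = C_2 = 2$ and $|S(2^3)| = C_{3,2} = 5$.
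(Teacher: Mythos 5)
Your proposal is correct: the forced placements of $1$ at $(1,1)$ and $2k+2$ at $(3,2)$, the reduction of standardness to the three inequalities $a<\min(B)$, $\max(A)<\min(B)$, $\max(A)<c$, and the two-case count ($k^2$ tableaux with $a<c$ from the $k\times k$ grid of choices $a\in\{2,\dots,k+1\}$, $c\in\{k+2,\dots,2k+1\}$, plus the single tableau with $a=c+1$, $c=k+1$) all check out, including the degenerate $k=1$ case. However, your route is genuinely different from the paper's. The paper never fixes the entries at $(1,2)$ and $(3,1)$ and counts directly; instead it argues recursively by deleting lower-right corner cells: since the largest entry sits at $(3,2)$, one passes to shape $(2,2,1)$, then conditions on how the entry $b$ at $(3,1)$ interleaves with the $k-1$ entries at $(2,2)$ (the case $b<a_1$ reducing to the hook $(2,1,1)$, and each of the $k-1$ choices of the largest $a_j<b$ reducing to the hook $(2,1)$), and finally evaluates the hook-shaped counts as single binomial coefficients, giving $\binom{k+1}{1}+(k-1)\binom{k}{1}=k^2+1$. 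The trade-off: your direct enumeration is more elementary and fully self-contained, but it exploits the smallness of the $2^3$ shape; the paper's corner-removal argument is the prototype for the general recurrences of Proposition \ref{thm: general n recurrences}, which is what makes the $n=3,4,5$ computations (Propositions \ref{thm: n=3}, \ref{thm: n=4}, \ref{thm: n=5}) feasible, so it buys scalability at the cost of a less transparent count in this smallest case.
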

\begin{proof}
As the largest entry of any $T \in \svt(2^3,\rho)$ must lie at $(3,2)$, we investigate the integers $a_1 < \hdots < a_{k-1}$ lying at $(2,2)$ in an arbitrary set-valued tableaux $T_1 \in \svt(\lambda_1,\rho)$ of shape $\lambda_1 = (2,2,1)$.  The only other entry in $T_1$ that may be larger than any of the $a_i$ is the entry $b$ at position $(3,1)$.  The subset of $\svt(\lambda_1,\rho)$ satisfying $b \leq a_i$ for all $i$ is then in bijection with $\svt(\lambda_2,\rho)$ for $\lambda_2 = (2,1,1)$.  If $b > a_1$, one must specify the ordering of $b$ relative to $a_2,\hdots,a_{k-1}$.  So assume that $j$ is the largest index such that $a_j < b$ (where $1 \leq j \leq k-1$).  Each choice of $j$ defines a subset of $\svt(\lambda_2,\rho)$ that is in bijection with $\svt(\lambda_3,\rho)$ for $\lambda_3 = (2,1)$, since for any choice of $j$ the $k$ largest entries of such a tableau $T_1 \in \svt(\lambda_1,\rho)$ is split between positions $(2,2)$ and $(3,1)$.  Combining these observations gives the string of equalities below.

\ytableausetup{boxsize=1em}
$$\raisebox{8pt}{\ydiagram{2,2,2}} \ = \ \raisebox{8pt}{\ydiagram{2,2,1}} \ = \ \raisebox{8pt}{\ydiagram{2,1,1}} \ + \ (k-1) \ \raisebox{8pt}{\ydiagram{2,1}} \ = \ \binom{k+1}{1} \ + \ (k-1) \binom{k}{1} \ = \ k^2 + 1$$
\end{proof}

\begin{proposition}
\label{thm: n=3}
Let $\rho = (1,k-1,1)$.  For any $k \geq 1$, $$C_{3,3}^k = \vert \svt(3^3,\rho) \vert = \frac{9k^4 - 2k^3 + 9k^2}{4} + 1$$
\end{proposition}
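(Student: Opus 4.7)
The plan is to mirror the corner-removal strategy of Proposition \ref{thm: n=2}, iteratively peeling cells off the bottom-right of the shape $3^3$ via case analyses on the relative orderings of corner-adjacent entries, ultimately reducing everything to hook-shaped tableaux enumerated by the binomial coefficients of Figure \ref{fig: hook-shaped enumeration}.

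The first step is immediate: since the maximum $3k+3$ of any $T \in \svt(3^3,\rho)$ must lie at $(3,3)$, deletion of that cell gives $|\svt(3^3,\rho)| = |\svt((3,3,2),\rho)|$. To decompose $|\svt((3,3,2),\rho)|$ further, I would let $a_1 < \ldots < a_{k-1}$ denote the entries at $(2,3)$, let $b$ denote the entry at $(3,2)$, and let $d$ denote the entry at $(3,1)$, and then case-split on whether $b < a_1$, and (when $b > a_1$) on both the index $j$ satisfying $a_j < b < a_{j+1}$ and the position of $d$ among $\{a_1, \ldots, a_j\}$. The key subtlety distinguishing this from the $n=2$ argument is that $d$ is only bounded above by $b$, so $d$ need not be smaller than all of the $a_i$ and hence $d$ can itself enter the ``top'' part of the linear order. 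Accounting for every case should yield a decomposition of the form
\[
|\svt((3,3,2),\rho)| \;=\; |\svt((3,2,2),\rho)| \;+\; (k-1)\,|\svt((3,2,1),\rho)| \;+\; \binom{k}{2}\,|\svt((3,2),\rho)|,
\]
with the three terms corresponding to $b<a_1$, to $b>a_1$ with $d<a_1$, and to $b>a_1$ with $d$ strictly exceeding some $a_i$, respectively.

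I would then recursively apply the same corner-removal-with-case-analysis to each term on the right, producing further sub-shapes such as $(2,2,2)$, $(2,2,1)$, $(3,1,1)$, and $(3,1)$, until every remaining sub-shape is either a hook (enumerated by a single binomial as in Figure \ref{fig: hook-shaped enumeration}) or has a cardinality already computed in Proposition \ref{thm: n=2}. Summing the binomial expressions and collecting like powers of $k$ should simplify to the target $(9k^4-2k^3+9k^2)/4+1$.

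The main obstacle will be the sub-cases where a sub-tableau's maximum lies in a cell containing $k-1$ entries --- for instance at the middle-row corner $(2,2)$ of the shape $(3,2,1)$. Such a cell cannot be deleted wholesale without violating the row-constant density, so one must instead further compare the remaining $k-2$ entries against adjacent sub-corner cells, generating an additional tree of sub-cases that each must be verified to sit in bijection with the correct smaller-shape set while avoiding double-counting. Beyond that, the final polynomial-in-$k$ arithmetic is routine but tedious due to the degree-four target expression.
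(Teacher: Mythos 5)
Your proposal follows essentially the same route as the paper: the identical top-level decomposition $\vert\svt((3,3,2),\rho)\vert = \vert\svt((3,2,2),\rho)\vert + (k-1)\vert\svt((3,2,1),\rho)\vert + \binom{k}{2}\vert\svt((3,2),\rho)\vert$ based on how the entries of cell $(2,3)$ interleave with the two bottom-row entries, followed by recursive corner removal; the subtlety you flag about deleting the multi-entry corner $(2,2)$ of $(3,2,1)$ is exactly what the paper's auxiliary computation $\vert\svt((3,2,1),\rho)\vert = \tfrac{3k^3+k^2+2k}{2}$ handles. The only cosmetic difference is that the paper short-circuits the two-row piece by citing $\vert\svt((3,2),(1,k-1))\vert = C_3^k$ from Heubach--Li--Mansour rather than recursing further, which does not change the argument.
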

\begin{proof}
We begin by enumerating $\svt(\lambda',\rho)$ for $\lambda'=(3,2,1)$.  For arbitrary $T' \in \svt(\lambda',\rho)$, let $a_1 < \hdots < a_{k-1}$ denote the entries at $(2,2)$, $b$ denote the entry at $(3,1)$, and $c$ denote the entry at $(1,3)$.  Proceeding as in the proof of Proposition \ref{thm: n=2}, we subdivide $\svt(\lambda',\rho)$ based on the relationship of $b$ and $c$ to the $a_i$ and then delete all entries $x \geq a_1$ to place each subset in bijection with tableaux of some smaller shape.  The equalities below synopsize our results, with the first summand corresponding to $b,c < a_1$, the second summand corresponding to the $k-1$ placements of $b$ relative to $a_2 < \hdots < a_{k-1}$ when $b > a_1$ yet $c < a_1$, the third summand corresponding to the $k-1$ placements of $c$ relative to $a_2 < \hdots < a_{k-1}$ when $c > a_1$ yet $b < a_1$, and the fourth summand corresponding to the $\binom{k}{k-2,1,1}$ placements of $b,c$ relative to $a_2 < \hdots < a_{k-1}$ when $b,c > a_1$.

\ytableausetup{boxsize=1em}
$$\raisebox{8pt}{\ydiagram{3,2,1}} \ = \ \raisebox{8pt}{\ydiagram{3,1,1}} \ + \ \binom{k-1}{1} \ \raisebox{8pt}{\ydiagram{3,1}} \ + \ \binom{k-1}{1} \ \raisebox{8pt}{\ydiagram{2,1,1}} \ + \ \binom{k}{k-2,1,1} \ \raisebox{8pt}{\ydiagram{2,1}}$$

$$= \ \binom{k+2}{2} + \binom{k-1}{1} \binom{k+1}{2} + \binom{k-1}{1} \binom{k+1}{1} + \binom{k}{k-2,1,1} \binom{k}{1} \ = \ \frac{3k^3 + k^2 + 2k}{2}$$

For the full theorem, we once again proceed as in the proof to Proposition \ref{thm: n=2}.  After reducing to arbitrary $T \in \svt(\lambda,\rho)$ with $\lambda = (3,3,2)$, we divide $\svt(\lambda,\rho)$ into subsets depending upon how the entries $a_1 < \hdots < a_{k-1}$ at position $(2,3)$ relate to the entry $b_1$ at $(3,1)$ and the entry $b_2$ at $(3,2)$.  The three summands in the first line of the equalities below corresponds to the cases of $b_1 < b_2 < a_1$, $b_1 < a_1 < b_2$, and $a_1 < b_1 < b_2$, respectively.  In the second line of equalities, the first of those subsets is further subdivided based upon the relationship of the entry $c$ at $(1,3)$ to the entry $b_2$ at $(3,2)$, with the two new summands corresponding to $b_2 < c$ and $c < b_2$, respectively.  This leaves a sum of cardinalities $\vert \svt(\lambda_i,\rho) \vert$ that are computable via Proposition \ref{thm: n=2}, our informal lemma for shape $\lambda'=(3,2,1)$, and the result of Heubach, Li and Mansour \cite{HLM} giving $\vert \svt(n^2,\rho) \vert = C^k_n$.

\ytableausetup{boxsize=1em}
$$\raisebox{8pt}{\ydiagram{3,3,3}} \ = \ \raisebox{8pt}{\ydiagram{3,3,2}} \ = \ \raisebox{8pt}{\ydiagram{3,2,2}} \ + \ \binom{k-1}{1} \ \raisebox{8pt}{\ydiagram{3,2,1}} \ + \ \binom{k}{2} \ \raisebox{8pt}{\ydiagram{3,2}}$$

$$= \ \raisebox{8pt}{\ydiagram{2,2,2}} \ + \ \raisebox{8pt}{\ydiagram{3,2,1}} \ + \ \binom{k-1}{1} \ \raisebox{8pt}{\ydiagram{3,2,1}} \ + \ \binom{k}{2} \ \raisebox{8pt}{\ydiagram{3,2}}$$

$$= \ (k^2 + 1) + k \left( \frac{3k^3 + k^2 + 2k}{2} \right) + \binom{k}{2} C^k_3 \ = \ \frac{9k^4 - 2k^3 + 9k^2}{4} + 1$$
\end{proof}

The proofs of Propositions \ref{thm: n=2} and \ref{thm: n=3} suggest a general methodology for enumerating $\svt(n^3,\rho)$ that could be applied to all $n \geq 2$.  In particular, for any three-row shape our technique of removing every entry in a lower-right corner yields the recurrences of Proposition \ref{thm: general n recurrences}.

\begin{proposition}
\label{thm: general n recurrences}
Fix $k \geq 1$.  For $\rho = (1,k-1,1)$ and any three-row shape $\lambda = (a,b,c)$ with $a \leq b \leq c$,

$$|\svt((a,b,c),\rho) | =
\begin{cases}
\displaystyle{\sum_{\substack{0 \leq i \leq a-b,\\[1pt] 0 \leq j \leq c}} \binom{k-2+i+j}{k-2,i,j} \kern+2pt \vert \svt((a-i,b-1,c-j),\rho) \vert}, & \text{if $b > c$;}\\[22pt]
\displaystyle{\sum_{0 \leq i \leq a-b} \vert \svt((a-i,b,c-1),\rho) \vert}, & \text{if $b = c$.}
\end{cases}$$
\end{proposition}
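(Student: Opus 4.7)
The plan is to prove both recurrences via a bijection that simultaneously removes an outer corner of $\lambda$ from the lower two rows together with a trailing block of cells in the other rows whose entries lie above a natural threshold. This is the direct analog of the peeling strategy used in the proofs of Propositions \ref{thm: n=2} and \ref{thm: n=3}, now generalized to an arbitrary three-row shape. The two cases of the recurrence correspond to whether cell $(2,b)$ or cell $(3,c)$ is the corner being peeled.

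For the case $b > c$, cell $(2,b)$ is an outer corner. Fix $T \in \svt((a,b,c),\rho)$ and let $\alpha_1 < \cdots < \alpha_{k-1}$ denote its entries. Row-standardness of row 2 and column-standardness at column $b$ force every entry of the cells $(1,1),\ldots,(1,b)$ and $(2,1),\ldots,(2,b-1)$ to lie strictly below $\alpha_1$. In contrast, the cells $(1,b+1),\ldots,(1,a)$ and $(3,1),\ldots,(3,c)$ contain increasing sequences of single entries, call them $\beta_{b+1}<\cdots<\beta_a$ and $\gamma_1<\cdots<\gamma_c$, that are incomparable to $\alpha_1$ through the standardness relations alone. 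Let $i$ (resp.\ $j$) be the number of $\beta$'s (resp.\ $\gamma$'s) strictly exceeding $\alpha_1$; these are necessarily trailing segments of their respective sequences, and so $0 \leq i \leq a-b$ and $0 \leq j \leq c$. The key claim to verify is that
\[
\{\alpha_1,\ldots,\alpha_{k-1}\} \cup \{\beta_{a-i+1},\ldots,\beta_a\} \cup \{\gamma_{c-j+1},\ldots,\gamma_c\}
\]
is exactly the set of the top $k-1+i+j$ integers of $T$. Granting this, erasing these integers together with their cells yields a tableau in $\svt((a-i,b-1,c-j),\rho)$; conversely, from any such smaller tableau one reconstructs $T$ by forcing $\alpha_1$ to be the smallest integer above the cutoff and then choosing how to distribute the remaining $k-2+i+j$ top integers among three increasing destinations of sizes $k-2$, $i$, and $j$. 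That latter choice contributes the multinomial factor $\binom{k-2+i+j}{k-2,i,j}$, and summing over $(i,j)$ produces the first recurrence.

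The case $b=c$ proceeds in the same fashion but with the corner $(3,c)$ peeled instead. Letting $\gamma$ be its single entry, an analogous standardness analysis forces every other entry of $T$ except possibly some trailing row-1 entries $\beta_\ell$ to lie below $\gamma$; letting $i$ count those exceeding $\gamma$ yields the remaining shape $(a-i,b,c-1)$. The multinomial collapses to $1$ because the top $i+1$ integers have completely forced positions ($\gamma$ as the smallest at $(3,c)$, the larger $i$ slotted in order along the trailing row-1 cells). The anticipated main obstacle for both cases is the careful verification of the ``top integers'' identification---in particular, that no entry outside the listed cells can exceed $\alpha_1$ (resp.\ $\gamma$), and that the range $i \leq a-b$ is sharp because $\beta_b$ sits directly above $\alpha_1$ in column $b$ and is therefore strictly below it by column-standardness. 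Once this incomparability structure is in place, both directions of each bijection follow quickly, since any newly inserted top integer is automatically greater than every entry of the smaller tableau and thus preserves row- and column-standardness.
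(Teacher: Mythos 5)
Your proposal is correct and takes essentially the same route as the paper, which justifies Proposition \ref{thm: general n recurrences} precisely by generalizing the corner-peeling of Propositions \ref{thm: n=2} and \ref{thm: n=3}: delete the corner cell $(2,b)$ (resp.\ $(3,c)$ when $b=c$) together with all entries at least its smallest entry, with the multinomial $\binom{k-2+i+j}{k-2,i,j}$ recording the interleaving of the deleted entries and the sum over $(i,j)$ giving the stated recurrences. Note only that the paper's hypothesis ``$a \leq b \leq c$'' is a typo for $a \geq b \geq c$, which you implicitly (and correctly) assumed.
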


Notice that, although we have utilized other results about hook-shaped tableaux and two-row tableaux to shorten our proofs in the $n=2,3$ cases, the two recurrences of Proposition \ref{thm: general n recurrences} are sufficient to reduce any $\vert \svt((a,b,c),\rho) \vert$ to a summation involving one-column shapes $\lambda_i$, where $\vert \svt(\lambda_i,\rho) \vert = 1$.  Considered as a function of $k$, we may then use Proposition \ref{thm: general n recurrences} to quickly draw several conclusions about $\vert \svt((a,b,c),\rho) \vert$:

\begin{corollary}
\label{thm: recurrence corollary}
Fix $\rho=(1,k-1,1)$, where $k$ is indeterminate, and let $\lambda = (a,b,c)$ satisfy both $b \geq 1$ and $a+c \geq 2$.  If $a=b=c$, then $\vert \svt((a,b,c),\rho) \vert$ is a polynomial in $k$ of degree $a+c-2$.  If $a>c$, then $\vert \svt((a,b,c),\rho) \vert$ is a polynomial in $k$ of degree $a+c-1$.
\end{corollary}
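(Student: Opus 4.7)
The plan is to prove both claims by strong induction on the triple $(a,b,c)$, taking the two recurrences of Proposition~\ref{thm: general n recurrences} as the inductive step. I would order shapes first by $a+b+c$ and then lexicographically, so that every shape appearing on the right of either branch of the recurrence is strictly smaller. Base cases include the one-column shape $(1,1,1)$ (a single tableau, hence degree $0$ in $k$), hook shapes as enumerated in Figure~\ref{fig: hook-shaped enumeration}, and the two-row shapes reached when a recurrence drops the bottom or middle row to zero cells; the degrees in those two-row cases are governed by the results of Heubach--Li--Mansour and Drube, and can be checked directly.

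I would first dispatch the diagonal case $a=b=c$ by reducing it to the $a>c$ case. Since $a = b$, the $b=c$ branch of the recurrence collapses to a single term ($i=0$), yielding
$$\vert \svt((a,a,a),\rho)\vert \;=\; \vert \svt((a,a,a-1),\rho)\vert.$$
The right-hand side satisfies $a>c$, so the inductive hypothesis produces a polynomial of degree $a+(a-1)-1 = 2a-2 = a+c-2$, matching the claim for $a=b=c$.

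The main step is the $a>c$ case, handled by whichever branch of Proposition~\ref{thm: general n recurrences} applies. In the $b > c$ branch, each summand is
$$\binom{k-2+i+j}{k-2,\,i,\,j}\,\vert \svt((a-i,b-1,c-j),\rho)\vert.$$
A direct expansion shows the multinomial coefficient is a polynomial in $k$ of degree $i+j$, and the tableau factor has degree known inductively. I would then identify which index pair $(i,j)$ maximises the total degree $(i+j) + \deg\vert \svt((a-i,b-1,c-j),\rho)\vert$ and show that this maximum equals $a+c-1$. In the $b=c$ branch, the sum $\sum_{0\leq i\leq a-b}\vert \svt((a-i,b,c-1),\rho)\vert$ is a sum of induction-degree polynomials whose dominant term occurs at $i=0$.

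The hard part will be the degree bookkeeping: once the degree is bounded from above, one must rule out cancellation among the leading coefficients of the maximum-degree summands. The cleanest route is to isolate a unique dominant $(i,j)$; failing that, a nonnegativity argument (every summand has nonnegative integer coefficients, as is implicit in the explicit computations for Propositions~\ref{thm: n=2} and~\ref{thm: n=3}) guarantees no cancellation. A secondary obstacle is compatibility at the two-row boundary of the induction, where the ``$a+c-1$'' formula must reconcile with the degree of the relevant two-row count (e.g.\ $\deg C^k_n = n-1$); this requires verifying case by case that each reduction terminating in a two-row or hook shape lands at the expected degree in $k$.
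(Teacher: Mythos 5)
Your overall strategy is the same as the paper's: induct on the total size $a+b+c$, feed the two branches of Proposition~\ref{thm: general n recurrences} into the inductive step, and use positivity of leading coefficients to rule out cancellation; your reduction of the rectangular case ($a=b=c$ collapses, via the $b=c$ branch, to the single term $\vert\svt((a,a,a-1),\rho)\vert$) is exactly the paper's. The genuine gap is in the degree bookkeeping you defer for the case $a>c$: it cannot come out as you claim when $a>b=c$. In that subcase only the $b=c$ branch is available, and its dominant summand (your $i=0$ term) is $\vert\svt((a,b,c-1),\rho)\vert$, which by the inductive hypothesis (now $b>c-1$) has degree $a+(c-1)-1=a+c-2$, with all other summands of strictly smaller degree; so your argument yields $a+c-2$, not $a+c-1$. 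Indeed the counts bear this out: $\vert\svt((2,1,1),\rho)\vert=k+1$, more generally $\vert\svt((a,1,1),\rho)\vert=\binom{k+a-1}{a-1}$ of degree $a-1=a+c-2$, and $\vert\svt((3,2,2),\rho)\vert$ has degree $3$, not $4$. What the paper's inductive step actually establishes is the dichotomy governed by $b$ versus $c$: degree $a+c-1$ when $b>c$ and degree $a+c-2$ when $b=c$ (the printed hypothesis ``$a>c$'' should be read as ``$b>c$''). You need to reorganize your case split accordingly; as written, the step you postpone would fail rather than merely be tedious.

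Two smaller points. In the $b>c$ branch there is no unique dominant pair $(i,j)$: every summand with $c-j<b-1$ has total degree $(i+j)+\bigl((a-i)+(c-j)-1\bigr)=a+c-1$ independent of $(i,j)$, so your preferred ``isolate a unique dominant summand'' route is unavailable and the positivity argument (each summand is a product of a multinomial and a count, both polynomials in $k$ with positive leading coefficient) must carry the day --- this is precisely the parenthetical observation in the paper's proof. Also, your base cases should include the degenerate one-row shapes $(a,0,0)$ (count $1$, degree $0$), since the recurrences pass through them even though they violate the corollary's hypotheses; the paper starts its induction at $a+b+c=3$ with $\vert\svt((2,1,0),\rho)\vert=k$ and $\vert\svt((1,1,1),\rho)\vert=1$ and needs no appeal to the two-row results of Heubach--Li--Mansour or Drube, though using them as you suggest is harmless provided you check, e.g., $\deg C^k_n=n-1=a+c-1$ for the shape $(n,n,0)$.
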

\begin{proof}
That $\vert \svt((a,b,c),\rho) \vert = p(k)$ is a polynomial in $k$ follows directly from the recursion of Proposition \ref{thm: recurrence corollary}.  To demonstrate the degree of $p(k)$, induct on $i=a+b+c$ for $i\geq 3$.  The base case of $i = 3$ follows from $\vert \svt((2,1,0), \rho) \vert = k$ and $\vert \svt((1,1,1),\rho) \vert = 1$.  For the inductive case, take $\vert \svt((a,b,c),\rho) \vert$ with $a+b+c = i+1$.  If $b>c$, the first case of Proposition \ref{thm: recurrence corollary} equates $\vert \svt((a,b,c),\rho) \vert$ with a sum of polynomials (all with positive leading coefficient) whose maximal degree summand(s) all have degree $a+c-1$.  If $b=c$, the second case of Proposition \ref{thm: recurrence corollary} equates $\vert \svt((a,b,c),\rho) \vert$ with a sum of polynomials whose sole maximal degree summand has degree $a+(c-1)-1$.
\end{proof}

In the case of $\lambda = n^3$, notice that Corollary \ref{thm: recurrence corollary} implies that $p(k) = \vert \svt(n^3,\rho) \vert$ has degree $2(n-1)$.  For several additional enumerations, Proposition \ref{thm: general n recurrences} may be applied with the aid of a computer algebra system to derive the following polynomials for the $k=4$ and $k=5$ cases.

\begin{proposition}
\label{thm: n=4}
Let $\rho = (1,k-1,1)$.  For any $k \geq 1$, $$C_{3,4}^k = \vert \svt(4^3,\rho) \vert = \frac{256 k^6 - 114 k^5 + 217 k^4 - 12 k^3 + 121 k^2}{36} + 1$$
\end{proposition}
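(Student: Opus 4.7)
The plan is to mirror the inductive methodology used in the proofs of Propositions \ref{thm: n=2} and \ref{thm: n=3}, applying the two recurrences of Proposition \ref{thm: general n recurrences} repeatedly to reduce $\vert \svt(4^3,\rho) \vert$ to a sum of cardinalities on strictly smaller three-row shapes whose values are either already computed or can themselves be expanded further. The base cases available are hook-shaped tableaux (which are counted by a single binomial as in Figure \ref{fig: hook-shaped enumeration}), two-row tableaux $\vert \svt(n^2,\rho) \vert = C_n^k$ (due to Heubach, Li and Mansour), and the previously established values $\vert \svt(2^3,\rho) \vert = k^2 + 1$ and $\vert \svt(3^3,\rho) \vert$.

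First I would begin with $\lambda = (4,4,4)$. Since $b = c$ and $a - b = 0$, the second case of Proposition \ref{thm: general n recurrences} collapses to the single-term reduction $\vert \svt((4,4,4),\rho) \vert = \vert \svt((4,4,3),\rho) \vert$. The shape $(4,4,3)$ has $b > c$, so the first case of the recurrence applies and produces a sum over $j \in \{0,1,2,3\}$ of $\binom{k-2+j}{j} \vert \svt((4,3,3-j),\rho) \vert$. Each resulting summand either lands in the $b = c$ case (such as $(4,3,3)$) or in the $b > c$ case, and iterated applications of the recurrence peel off lower-right cells until every remaining summand is hook-shaped, two-row, or one of the known three-row cardinalities. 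At that stage one assembles the answer by substituting the base-case expressions and collecting terms.

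The main obstacle is purely bookkeeping: each reduction generates several summands, and the full expansion produces a long sum whose simplification into the claimed polynomial $\frac{256k^6 - 114k^5 + 217k^4 - 12k^3 + 121k^2}{36} + 1$ requires careful polynomial arithmetic involving the multinomial coefficients $\binom{k-2+i+j}{k-2,i,j}$. As the paper already signals, this is exactly the point at which one naturally delegates to a computer algebra system, once the recursive expansion has been written down explicitly. Two cheap consistency checks are available: Corollary \ref{thm: recurrence corollary} guarantees that the output must be a polynomial in $k$ of degree $2(4-1) = 6$, matching the leading term $\frac{256}{36}k^6$; and evaluating the claimed formula at $k = 1$ gives $\frac{256 - 114 + 217 - 12 + 121}{36} + 1 = 13 + 1 = 14 = C_4$, which matches the degenerate prediction $C_{3,4}^1 = C_4$ noted at the beginning of the section.
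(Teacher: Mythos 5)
Your proposal matches the paper's own treatment: the paper derives Proposition \ref{thm: n=4} exactly by applying the recurrences of Proposition \ref{thm: general n recurrences} (starting from $(4,4,4)\to(4,4,3)$ as in the $n=2,3$ proofs) with the final expansion and simplification delegated to a computer algebra system. Your consistency checks (degree $6$ from Corollary \ref{thm: recurrence corollary} and the $k=1$ specialization to $C_4=14$) are correct and a sensible addition.
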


\begin{proposition}
\label{thm: n=5}
Let $\rho = (1,k-1,1)$.  For any $k \geq 1$, $$C_{3,5}^k = \vert \svt(5^3,\rho) \vert = \frac{15625 k^8 - 10092 k^7 + 10258 k^6 - 72 k^5 + 5473 k^4 - 204 k^3 + 2628 k^2}{576} + 1$$
\end{proposition}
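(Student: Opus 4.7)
The plan is to iterate Proposition~\ref{thm: general n recurrences} starting from the shape $\lambda = 5^3$, expanding the resulting recursion tree until every leaf corresponds to a shape whose count is already in hand: single-column shapes (which contribute $1$), hook shapes (handled by a single binomial coefficient as in Figure~\ref{fig: hook-shaped enumeration}), two-row shapes (enumerated by $C_n^k$), or the three-row cases computed in Propositions~\ref{thm: n=2}, \ref{thm: n=3}, and \ref{thm: n=4}. Since $\rho = (1,k-1,1)$ is fixed throughout, every intermediate expression is a polynomial in the indeterminate $k$ with rational coefficients.

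The first reduction is forced: the $b = c$ branch of the recurrence with $a - b = 0$ collapses $|\svt(5^3, \rho)|$ to $|\svt((5,5,4), \rho)|$. The $b > c$ branch then expands this into $\sum_{j=0}^{4} \binom{k-2+j}{k-2,\, j} \, |\svt((5,4,4-j), \rho)|$, and each summand is expanded in the same way. Along each branch, the multinomial weights $\binom{k-2+i+j}{k-2,\,i,\,j}$ accumulate multiplicatively, so every leaf of the recursion tree contributes the product of its accumulated weight with a known base count. Summing these contributions and simplifying yields the closed formula. Corollary~\ref{thm: recurrence corollary} confirms in advance that the result is a polynomial in $k$ of degree $2(5-1) = 8$, which matches the degree of the formula as stated.

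The main obstacle is not conceptual but logistical: the recursion tree is wide enough that the expansion and the subsequent polynomial simplification are impractical to do by hand. The cleanest execution is to implement the two-case recurrence of Proposition~\ref{thm: general n recurrences} in a computer algebra system (as the authors indicate) and then simplify the output over the common denominator $576$. For sanity checks, one may verify the polynomial at $k = 1$, where the middle row is empty and the count must reduce to $|S(5^2)| = C_5 = 42$; indeed, $(15625 - 10092 + 10258 - 72 + 5473 - 204 + 2628)/576 + 1 = 23616/576 + 1 = 42$. One may further cross-check at small $k \geq 2$ against the computer-generated data in Table~\ref{tab: 1,k-1,1} of Appendix~\ref{sec: appendix}.
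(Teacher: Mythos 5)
Your proposal is correct and matches the paper's own approach: the paper likewise obtains this formula by iterating the recurrences of Proposition~\ref{thm: general n recurrences} with the aid of a computer algebra system, exactly as you describe. Your added checks (the degree $2(n-1)=8$ from Corollary~\ref{thm: recurrence corollary} and the $k=1$ specialization to $C_5=42$) are consistent with the stated polynomial.
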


\section{Properties of $k$-ary Product-Coproduct Prographs}
\label{sec: properties}

In this section we prove a generalization of Theorem \ref{thm: prograph vs tableaux bijection, rectangular} that applies to non-closed $k$-ary prographs satisfying certain basic properties.  We then explore one significant application of our bijection that generalizes an unproven proposition of Borie \cite{Borie}, showing that 180-degree rotation of prographs corresponds to a set-valued analogue of the Sch\"utzenberger involution on standard Young tableaux.

\subsection{Non-Closed $k$-ary Prographs and Set-Valued Tableaux}
\label{subsec: non-closed prographs}

We begin by generalizing the set $\pro_G$ to finite compositions of formal operators where the initial input is the an $x$-fold tensor product $A \otimes \cdots \otimes A$ of the formal module $A$.  The resulting directed plane graphs resemble prographs over $G$ but now contain precisely $x$ input strands, aligned horizontally across the bottom of the graph.  Fixing $G$ and $m \geq 1$, we may enforce a notion of equivalence on the resulting set of directed plane graphs that is analogous to the equivalence relation on $\pro_G$ from Section \ref{sec: prographs}.  We refer to the resulting set of equivalence classes $\pro_{G,x}$ as the set of $x$-fold (free) prographs generated by $G$.

In the case where $G$ consists of a $k$-ary coproduct $\Delta_k$ and a $k$-ary product $\mu_k$, we refer to the elements of $\pro_{G,x}$ as \textbf{$\mathbf{x}$-fold $\mathbf{k}$-ary (product-coproduct) prographs}.  We denote the subset of $x$-fold $k$-ary prographs with precisely $n$ coproduct nodes, $m$ product nodes, and $x$ input strands by $\pc_x^k(n,m)$.  Notice that these three parameters are sufficient to determine the number of output strands in any $G \in \pc_x^k(n,m)$.  Explicitly,

\begin{proposition}
\label{thm: number of output strands}
Take any $G \in \pc_x^k(n,m)$.  Then $G$ has precisely $y = (n-m)(k-1)+x$ output strands.  In particular, $y \equiv x \mod(k-1)$.
\end{proposition}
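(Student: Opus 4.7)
The plan is to prove the formula by a conservation-of-strands argument applied to a horizontal sweep across the prograph. Since every $G \in \pc_x^k(n,m)$ has a representative in which the $n+m$ non-initial, non-terminal vertices occur at pairwise distinct heights, I would consider a sequence of $n+m+1$ horizontal cross-sections of $G$: one just below the bottommost node, one just above the topmost node, and one between each consecutive pair of adjacent nodes. By construction the bottom cross-section meets $G$ in precisely $x$ edges (the input strands) and the top cross-section meets $G$ in precisely $y$ edges (the output strands).

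First I would observe that crossing a single coproduct node $\Delta_k$ replaces one strand by $k$ strands, so the cross-section count increases by $k-1$; symmetrically, crossing a single product node $\mu_k$ replaces $k$ strands by one, so the count decreases by $k-1$. Summing these $n$ positive and $m$ negative contributions and telescoping over the sequence of cross-sections gives $y = x + n(k-1) - m(k-1) = x + (n-m)(k-1)$, which is the stated formula. The congruence $y \equiv x \pmod{k-1}$ is then an immediate consequence.

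There is no serious obstacle here, since the argument is essentially a local conservation law. The only subtlety is verifying that the planar equivalence relation defining $\pro_{G,x}$ preserves both the bottom and top cross-section counts; this holds because equivalence is generated by vertical rearrangements that neither alter the number of product and coproduct nodes nor change the horizontal arrangement of the input and output strands, and consequently the numbers $x$ and $y$ depend only on the equivalence class of $G$.
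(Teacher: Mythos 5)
Your proposal is correct and is essentially the same argument as the paper's: the paper also tracks the number of free edges as operators are applied, noting that each coproduct adds $k-1$ strands and each product removes $k-1$, giving $y = x + n(k-1) - m(k-1)$. Your extra remarks on cross-sections and invariance under the planar equivalence are just a more formal packaging of the same conservation law.
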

\begin{proof}
Observe that each $k$-ary coproduct increases the number of free edges by $k-1$, while each $k$-ary product decreases the number of free edges by $k-1$.  If we begin with $x$ free edges, after $n$ coproducts and $m$ products we have $y = x + n(k-1) - m(k-1)$ outgoing free edges.
\end{proof}

For any $x \equiv 1 \mod(k-1)$, consider $\pc_x^k(n,m)$.  There exists an injection $j: \pc_x^k(n,m) \rightarrow \pc^k(n + \frac{x-1}{k-1})$ that is defined by recursively joining incoming strands with $k$-ary coproducts, from left to right in sets of $k$, while recursively joining outgoing strands with $k$-ary products, from right to left in sets of $k$.  See Figure \ref{fig: justified prographs} for an illustration.  For any $G \in \pc_x^k(n,m)$, we call the image $j(G) \in \pc^k(n + \frac{x-1}{k-1})$ the \textbf{justification} of $G$.  Assuming $x \equiv 1 \mod(k-1)$, justification suggests the generalization of Theorem \ref{thm: prograph vs tableaux bijection, rectangular} given by Theorem \ref{thm: prograph vs tableaux bijection, non-rectangular}.

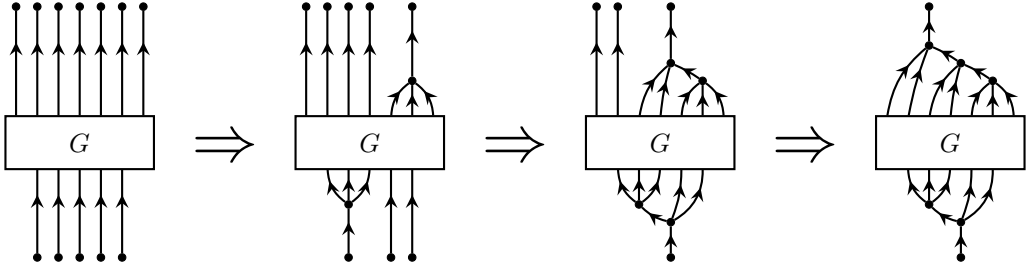
\begin{figure}[ht!]
\centering
\begin{tikzpicture}
[scale=.47,auto=left,every node/.style={circle,fill=black,inner sep=1.0pt}]
    \node[draw] (a1) at (-1.2,0) {};
    \node[draw] (a2) at (-.6,0) {};
    \node[draw] (a3) at (0,0) {};
    \node[draw] (a4) at (.6,0) {};
    \node[draw] (a5) at (1.2,0) {};
    \node[fill=none] (b1) at (-1.2,2.6) {};
    \node[fill=none] (b2) at (-.6,2.6) {};
    \node[fill=none] (b3) at (0,2.6) {};
    \node[fill=none] (b4) at (.6,2.6) {};
    \node[fill=none] (b5) at (1.2,2.6) {};
	\node[fill=none] (c0) at (-1.8,3.9) {}; 
    \node[fill=none] (c1) at (-1.2,3.9) {};
    \node[fill=none] (c2) at (-.6,3.9) {};
    \node[fill=none] (c3) at (0,3.9) {};
    \node[fill=none] (c4) at (.6,3.9) {};
    \node[fill=none] (c5) at (1.2,3.9) {};
   	\node[fill=none] (c6) at (1.8,3.9) {};
   	\node[draw] (d0) at (-1.8,7.1) {};  
    \node[draw] (d1) at (-1.2,7.1) {};
    \node[draw] (d2) at (-.6,7.1) {};
    \node[draw] (d3) at (0,7.1) {};
    \node[draw] (d4) at (.6,7.1) {};
    \node[draw] (d5) at (1.2,7.1) {};      
	\node[draw] (d6) at (1.8,7.1) {};
    \draw[thick] (-2.1,2.5) rectangle (2.1,4);
    \draw[thick,directed] (a1) to (b1) {};
    \draw[thick,directed] (a2) to (b2) {};
    \draw[thick,directed] (a3) to (b3) {};
    \draw[thick,directed] (a4) to (b4) {};
    \draw[thick,directed] (a5) to (b5) {};
    \draw[thick,directed] (c0) to (d0) {};    
    \draw[thick,directed] (c1) to (d1) {};
    \draw[thick,directed] (c2) to (d2) {};
    \draw[thick,directed] (c3) to (d3) {};
    \draw[thick,directed] (c4) to (d4) {};
    \draw[thick,directed] (c5) to (d5) {};    
    \draw[thick,directed] (c6) to (d6) {};
    \node[fill=none] at (0,3.25) {$G$};
\end{tikzpicture}
\hspace{.1in}
\raisebox{38pt}{\scalebox{2.5}{$\Rightarrow$}}
\hspace{.1in}
\begin{tikzpicture}
[scale=.47,auto=left,every node/.style={circle,fill=black,inner sep=1.0pt}]
    \node[draw] (a2) at (-.6,0) {};
    \node[draw] (a4) at (.6,0) {};
    \node[draw] (a5) at (1.2,0) {};
    \node[draw] (bottom1) at (-.6,1.5) {};
    \node[fill=none] (b1) at (-1.2,2.6) {};
    \node[fill=none] (b2) at (-.6,2.6) {};
    \node[fill=none] (b3) at (0,2.6) {};
    \node[fill=none] (b4) at (.6,2.6) {};
    \node[fill=none] (b5) at (1.2,2.6) {};
    \node[fill=none] (c0) at (-1.8,3.9) {};   
    \node[fill=none] (c1) at (-1.2,3.9) {};
    \node[fill=none] (c2) at (-.6,3.9) {};
    \node[fill=none] (c3) at (0,3.9) {};
    \node[fill=none] (c4) at (.6,3.9) {};
    \node[fill=none] (c5) at (1.2,3.9) {};
    \node[fill=none] (c6) at (1.8,3.9) {};
    \node[draw] (top1) at (1.2,5.0) {};
 	\node[draw] (d0) at (-1.8,7.1) {};    
    \node[draw] (d1) at (-1.2,7.1) {};
    \node[draw] (d2) at (-.6,7.1) {};
    \node[draw] (d3) at (0,7.1) {};
    \node[draw] (d5) at (1.2,7.1) {};
    \draw[thick] (-2.1,2.5) rectangle (2.1,4);
    \draw[thick,directed] (a2) to (bottom1) {};
    \draw[thick,directed,bend left=25] (bottom1) to (b1) {};
    \draw[thick,directed] (bottom1) to (b2) {};
    \draw[thick,directed,bend right=25] (bottom1) to (b3) {};
    \draw[thick,directed] (a4) to (b4) {};
    \draw[thick,directed] (a5) to (b5) {};
    \draw[thick,directed] (c0) to (d0) {};
    \draw[thick,directed] (c1) to (d1) {};
    \draw[thick,directed] (c2) to (d2) {};
    \draw[thick,directed] (c3) to (d3) {};
    \draw[thick,directed,bend left=25] (c4) to (top1) {};
    \draw[thick,directed] (c5) to (top1) {};
    \draw[thick,directed,bend right=25] (c6) to (top1) {};
    \draw[thick,directed] (top1) to (d5) {};  
    \node[fill=none] at (0,3.25) {$G$};
\end{tikzpicture}
\hspace{.1in}
\raisebox{38pt}{\scalebox{2.5}{$\Rightarrow$}}
\hspace{.1in}
\begin{tikzpicture}
[scale=.47,auto=left,every node/.style={circle,fill=black,inner sep=1.0pt}]
    \node[draw] (a4) at (.3,0) {};
    \node[draw] (bottom2) at (.3,1) {};
    \node[draw] (bottom1) at (-.6,1.5) {};
    \node[fill=none] (b1) at (-1.2,2.6) {};
    \node[fill=none] (b2) at (-.6,2.6) {};
    \node[fill=none] (b3) at (0,2.6) {};
    \node[fill=none] (b4) at (.6,2.6) {};
    \node[fill=none] (b5) at (1.2,2.6) {};
	\node[fill=none] (c0) at (-1.8,3.9) {}; 
    \node[fill=none] (c1) at (-1.2,3.9) {};
    \node[fill=none] (c2) at (-.6,3.9) {};
    \node[fill=none] (c3) at (0,3.9) {};
    \node[fill=none] (c4) at (.6,3.9) {};
    \node[fill=none] (c5) at (1.2,3.9) {};
    \node[fill=none] (c6) at (1.8,3.9) {};
    \node[draw] (top1) at (1.2,5.0) {};
    \node[draw] (top2) at (.3,5.5) {};
    \node[draw] (d0) at (-1.8,7.1) {};
    \node[draw] (d1) at (-1.2,7.1) {};
    \node[draw] (d4) at (.3,7.1) {};
    \draw[thick] (-2.1,2.5) rectangle (2.1,4);
    \draw[thick,directed] (a4) to (bottom2) {};
    \draw[thick,directed,bend left=15] (bottom2) to (bottom1) {};
    \draw[thick,directed,bend right=10] (bottom2) to (b4) {};
    \draw[thick,directed,bend right=30] (bottom2) to (b5) {};
    \draw[thick,directed,bend left=25] (bottom1) to (b1) {};
    \draw[thick,directed] (bottom1) to (b2) {};
    \draw[thick,directed,bend right=25] (bottom1) to (b3) {};
    \draw[thick,directed,bend left=25] (c4) to (top1) {};
    \draw[thick,directed] (c5) to (top1) {};
    \draw[thick,directed,bend right=25] (c6) to (top1) {};
	\draw[thick,directed,bend left=20] (c2) to (top2) {};
	\draw[thick,directed,bend left=5] (c3) to (top2) {};
	\draw[thick,directed,bend right=5] (top1) to (top2) {};
    \draw[thick,directed] (top2) to (d4) {};
    \draw[thick,directed] (c0) to (d0) {};
    \draw[thick,directed] (c1) to (d1) {};  
    \node[fill=none] at (0,3.25) {$G$};
\end{tikzpicture}
\hspace{.1in}
\raisebox{38pt}{\scalebox{2.5}{$\Rightarrow$}}
\hspace{.1in}
\begin{tikzpicture}
[scale=.47,auto=left,every node/.style={circle,fill=black,inner sep=1.0pt}]
    \node[draw] (a4) at (.3,0) {};
    \node[draw] (bottom2) at (.3,1) {};
    \node[draw] (bottom1) at (-.6,1.5) {};
    \node[fill=none] (b1) at (-1.2,2.6) {};
    \node[fill=none] (b2) at (-.6,2.6) {};
    \node[fill=none] (b3) at (0,2.6) {};
    \node[fill=none] (b4) at (.6,2.6) {};
    \node[fill=none] (b5) at (1.2,2.6) {};
	\node[fill=none] (c0) at (-1.8,3.9) {}; 
    \node[fill=none] (c1) at (-1.2,3.9) {};
    \node[fill=none] (c2) at (-.6,3.9) {};
    \node[fill=none] (c3) at (0,3.9) {};
    \node[fill=none] (c4) at (.6,3.9) {};
    \node[fill=none] (c5) at (1.2,3.9) {};
    \node[fill=none] (c6) at (1.8,3.9) {};
    \node[draw] (top1) at (1.2,5.0) {};
    \node[draw] (top2) at (.3,5.5) {};
    \node[draw] (top3) at (-.6,6.0) {};
    \node[draw] (d4) at (-.6,7.1) {};
    \draw[thick] (-2.1,2.5) rectangle (2.1,4); 
    \draw[thick,directed] (a4) to (bottom2) {};
    \draw[thick,directed,bend left=15] (bottom2) to (bottom1) {};
    \draw[thick,directed,bend right=10] (bottom2) to (b4) {};
    \draw[thick,directed,bend right=30] (bottom2) to (b5) {};
    \draw[thick,directed,bend left=25] (bottom1) to (b1) {};
    \draw[thick,directed] (bottom1) to (b2) {};
    \draw[thick,directed,bend right=25] (bottom1) to (b3) {};
    \draw[thick,directed,bend left=25] (c4) to (top1) {};
    \draw[thick,directed] (c5) to (top1) {};
    \draw[thick,directed,bend right=25] (c6) to (top1) {};
	\draw[thick,directed,bend left=20] (c2) to (top2) {};
	\draw[thick,directed,bend left=5] (c3) to (top2) {};
	\draw[thick,directed,bend right=5] (top1) to (top2) {};
    \draw[thick,directed,bend right=5] (top2) to (top3) {};
    \draw[thick,directed,bend left=20] (c0) to (top3) {};
    \draw[thick,directed,bend left=5] (c1) to (top3) {};
    \draw[thick,directed] (top3) to (d4) {};
    \node[fill=none] at (0,3.25) {$G$};
\end{tikzpicture}
\caption{A non-closed prograph $G \in \pc_5^3(n,n-1)$ and its justification $j(G) \in \pc^3(n + \frac{5-1}{3-1})$.}
\label{fig: justified prographs}
\end{figure}

\begin{theorem}
\label{thm: prograph vs tableaux bijection, non-rectangular}
Fix $n,m \geq 1$, $k \geq 2$, and take any $x \geq 1$ such that $x \equiv 1 \mod (k-1)$.  Then $\vert \pc_x^k(n,m) \vert = \vert \svt(\lambda/\mu,\rho) \vert$, where $\lambda = (n + \frac{x-1}{k-1},n + \frac{x-1}{k-1},m)$, $\mu = (\frac{x-1}{k-1},0,0)$, and $\rho = (1,k-1,1)$.
\end{theorem}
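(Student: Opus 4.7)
The plan is to leverage Theorem~\ref{thm: prograph vs tableaux bijection, rectangular} by reducing the skew case to the closed rectangular one via the justification map $j$. Writing $a = \frac{x-1}{k-1}$ and $N = n+a$, I would first verify that $j$ is an injection $\pc_x^k(n,m) \hookrightarrow \pc^k(N)$: attaching the bottom staircase of $a$ coproducts beneath the $x$ inputs and a top staircase of $a + (n-m) = N-m$ products atop the $y$ outputs (using Proposition~\ref{thm: number of output strands}) produces a closed prograph with $N$ coproducts and $N$ products, and $G$ is recovered by simply excising these boundary staircases. Composing with the bijection $\Phi: \pc^k(N) \to \svt(N^3,\rho)$ from Theorem~\ref{thm: prograph vs tableaux bijection, rectangular} gives an injection $\Phi \circ j: \pc_x^k(n,m) \hookrightarrow \svt(N^3,\rho)$.

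The next step is to pin down the image of $\Phi \circ j$ by tracing the depth-left first search on $j(G)$. The claim is that $T := \Phi(j(G))$ always satisfies (a) its first $a$ top-row entries are $1, 2, \ldots, a$, and (b) its last $N-m$ bottom-row entries are $Nk+m+1, Nk+m+2, \ldots, N(k+1)$. For (a), an induction shows that once label $i-1$ is placed on the input of the $i$-th staircase coproduct $C_i$, the uniquely highest-labeled edge entering a node in $V_{i-1}$ is this input, forcing label $i$ to land on $C_i$'s leftmost child; as leftmost coproduct children, labels $1, \ldots, a$ occupy the leftmost $a$ cells of the top row. For (b), once $G$'s interior has been fully processed, the only eligible node is $P_1$ (whose $k$ inputs are the rightmost $k$ output strands of $G$, all already labeled), and by the staircase structure $P_j$ becomes eligible immediately after $P_{j-1}$'s output is labeled; this produces $N-m$ consecutive final labels that, since they are all product outputs, land in the rightmost $N-m$ bottom-row cells.

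For the converse, I would take $T \in \svt(N^3,\rho)$ satisfying (a) and (b), run the inverse $\Phi_2$ from the proof of Theorem~\ref{thm: prograph vs tableaux bijection, rectangular}, and show the result has the justification's staircase structure. Processing entries $1, 2, \ldots, a$ successively places $C_1, \ldots, C_a$ as a bottom staircase (each $C_i$'s input being the leftmost child of $C_{i-1}$); processing the final $N-m$ bottom-row entries places products $P_1, \ldots, P_{N-m}$ whose rightmost inputs are the preceding product's output (these entries are consecutive integers, so the rightmost-input rule in $\Phi_2$ chains them together) and whose other inputs are the $k-1$ nearest free edges to the left, yielding precisely the top staircase. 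The residual prograph has $N-a = n$ coproducts, $N-(N-m) = m$ products, and $x$ input strands (the non-leftmost outputs of $C_1, \ldots, C_a$ together with $C_a$'s leftmost), i.e., an element of $\pc_x^k(n,m)$. Finally, the set of $T \in \svt(N^3,\rho)$ satisfying (a) and (b) bijects with $\svt(\lambda/\mu,\rho)$ by simply removing the forced first-$a$ top-row cells and last-$(N-m)$ bottom-row cells, then subtracting $a$ from every surviving entry; column- and row-standardness transfer across each cut because the removed entries are extremal in their rows.

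The main technical obstacle is the part of step 2 verifying (b), where one must rule out that a product internal to $G$ interleaves with the top-staircase run at the end of the DLF search. The cleanest argument is that every output strand of $G$ is, by construction of $j(G)$, an input to some top-staircase product, so before any $P_j$ can become eligible, every output strand of $G$ must already be labeled, which (combined with the leftward bias of the DLF) forces the entire interior of $G$ to be exhausted before $P_1$'s output can be assigned. A secondary subtlety arises in the converse: one must confirm that the $k-1$ ``nearest terminal edges immediately to the left'' consumed by each $\Phi_2$-placed product $P_j$ really are the output strands of $G$ that the justification would have identified. This follows by induction from the fact that at each step the edge labeled $i-1$ is the unique rightmost free edge, so the procedure peels off the rightmost $k$ free edges in agreement with the top staircase's right-to-left combination order.
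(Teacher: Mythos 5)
Your proposal is correct and follows essentially the same route as the paper: justify $G$, apply the rectangular bijection $\Phi$, characterize the image by the forced initial top-row entries $1,\hdots,\frac{x-1}{k-1}$ and forced final bottom-row entries, and then strip those cells and reindex to land in $\svt(\lambda/\mu,\rho)$, with the converse handled by running $\Phi_2$ and unjustifying the resulting staircase structure. Your treatment of the two technical points (no interleaving of interior products with the top staircase, and the identification of the $k-1$ left free edges consumed by each appended product) is if anything slightly more explicit than the paper's.
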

\begin{proof}
Let $j: \pc_x^k(n,m) \rightarrow \pc^k(n+\frac{x-1}{k-1})$ be justification and let $\Phi: \pc^k(n+\frac{x-1}{k-1}) \rightarrow \svt((n+\frac{x-1}{k-1})^3,\rho)$ be the forward bijection from Theorem \ref{thm: prograph vs tableaux bijection, rectangular}.  Then define $\chi: \svt((n+\frac{x-1}{k-1})^3,\rho) \rightarrow \svt(\lambda/\mu, \rho)$ as the map that deletes the first $\frac{x-1}{k-1}$ cells in the top row of $T \in \svt((n+\frac{x-1}{k-1})^3,\rho)$, deletes the last $\frac{y-1}{k-1}$ cells in the bottom row of $T$, and then reindexes all remaining entries so that no positive integers are skipped.  We define $\psi: \pc_x^k(n,m) \rightarrow \svt(\lambda/\mu,\rho)$ by $\psi = \chi \circ \Phi \circ j$, and show that $\psi$ is a bijection.  See Figure \ref{fig: non-closed bijection example} for an example of this map $\psi$.

Well-definedness of $j$, $\Phi$, and $\chi$ ensure that the composition $\psi$ is also well-defined.  To show that $\psi$ is a bijection, we begin showing that the restriction $\widetilde{\chi} = \chi \vert_{\im(\Phi \circ j)}$ is a bijection onto $\svt(\lambda / \mu, \rho)$.  So take any $G \in \pc_x^k(n,m)$.  Using Proposition \ref{thm: number of output strands}, the number of output strands in $G$ is $y = (k-1)(n-m)+x$.  Thus $n + \frac{x-1}{k-1} = m + \frac{y-1}{k-1}$, and $j(G) \in \pc^k(n + \frac{x-1}{k-1})$ is obtained from $G$ by recursively adding $\frac{x-1}{k-1}$ left-aligned coproducts to the bottom of $G$ and $\frac{y-1}{k-1}$ right-aligned products to the top of $G$.  Applying our depth-left first search to $j(G)$ then results in the first $\frac{x-1}{k-1}$ non-zero labels being applied to the leftmost children of the ``new" coproduct nodes at the bottom of $j(G)$, while the final $\frac{y-1}{k-1}$ labels are applied to the outputs of the ``new" product nodes at the top of $j(G)$.  This guarantees that the first row of every $T \in \im(\phi \circ j)$ begins with $1,\hdots,\frac{x-1}{k-1}$ and that the bottom row of every such $T$ ends with $k(n+\frac{x-1}{k-1})+m+1, \hdots, (k+1)(n+\frac{x-1}{k-1})$.  It follows that the entries deleted by $\chi$ are identical across all tableaux in $\widetilde{\chi}$, implying that $\widetilde{\chi}$ is a bijection.

Bijectivity of $\widetilde{\chi}$ implies that $\chi \circ \Phi$ is also bijective with inverse $(\chi \circ \Phi)^{-1} \equiv \Phi^{-1} \circ \widetilde{\chi}^{-1}$.  Notice that $\widetilde{\chi}^{-1}: \svt(\lambda/\mu,\rho) \rightarrow \svt((n+ \frac{x+1}{k-1})^3,\rho)$ is the function that reindexes all entries of $T \in \svt(\lambda/\mu,\rho)$ by $a \mapsto a+ \frac{x-1}{k-1}$, appends $\lbrace 1,\hdots,\frac{x-1}{k-1} \rbrace$ to the front of the top row, and appends the $\frac{y-1}{k-1}$ entries $\lbrace k(n+\frac{x-1}{k-1})+m+1, \hdots, (k+1)(n+\frac{x-1}{k-1}) \rbrace$ to the end of the bottom row.  This means that $\im(\Phi^{-1} \circ \widetilde{\chi}^{-1})$ are the prographs $G \in \pc^k(n+\frac{x-1}{k-1})$ with $\frac{x-1}{k-1}$ consecutive left-aligned coproducts at the bottom and $\frac{y-1}{k-1}$ consecutive right-aligned products at the top.

All of this allows us to define an ``unjustification" map $h: \im(\Phi^{-1} \circ \widetilde{\chi}^{-1}) \rightarrow \svt(\lambda/\mu, \rho)$ where, for any prograph $G \in \im(\Phi^{-1} \circ \widetilde{\chi}^{-1})$, one simply deletes the $\frac{x-1}{k-1}$ initial product nodes (along with their inputs) and deletes the $\frac{y-1}{k-1}$ final coproduct nodes (along with their outputs).  This map $h$ clearly satisfies $j \circ h(G) = G$ for any $G \in \im(\Phi^{-1} \circ \widetilde{\chi}^{-1})$ and $h \circ j(G) = G$ for any $G \in \svt(\lambda/\mu,\rho)$.  We may then conclude that $\psi$ is a bijection with inverse $(\chi \circ \Phi \circ j)^{-1} \equiv h \circ \Phi^{-1} \circ \widetilde{\chi}^{-1}$.
\end{proof}

\begin{figure}[ht!]
\centering
\raisebox{19pt}{
\begin{tikzpicture}
[scale=0.48,auto=left,every node/.style={circle,fill=black,inner sep=1pt}]
    \node[draw] (a1) at (.25,-1.4) {};
    \node[draw] (a2) at (3.55,-1.4) {};
    \node[draw] (a3) at (4.3,-1.4) {};
    \node[draw] (b) at (.25,0) {};
    \node[draw] (c) at (2.5,1) {};
    \node[draw] (d) at (1.5,2.5) {};
    \node[draw] (e) at (2.5,3.5) {};
    \node[draw] (f1) at (-0.5,4.9) {};
    \node[draw] (f2) at (0.25,4.9) {};    
    \node[draw] (f3) at (.75,4.9) {};
    \node[draw] (f4) at (1.5,4.9) {};
    \node[draw] (f5) at (2.5,4.9) {};
    \node[draw] (f6) at (3.55,4.9) {};
    \node[draw] (f7) at (4.3,4.9) {};
    \draw[thick,directed] (a1) to (b);
    \draw[thick,directed,bend left=10] (b) to (f1);
    \draw[thick,directed,bend left=0] (b) to (f2);
    \draw[thick,directed] (b) to (c);
    \draw[thick,directed] (c) to (d);
    \draw[thick,directed] (c) to (e);
    \draw[thick,directed,bend right=40] (c) to (e);
    \draw[thick,directed] (d) to (f4);
    \draw[thick,directed,bend left=10] (d) to (f3);
    \draw[thick,directed] (d) to (e);
    \draw[thick,directed] (e) to (f5);
	\draw[thick,directed] (a2) to (f6);
	\draw[thick,directed] (a3) to (f7);
\end{tikzpicture}}
\hspace{.1in} \raisebox{60pt}{\scalebox{3}{$\Rightarrow$} \kern-25pt{\raisebox{18pt}{$j$}}} \hspace{.2in}
\begin{tikzpicture}
[scale=0.48,auto=left,every node/.style={circle,fill=black,inner sep=1pt}]
	\node[draw] (base) at (3,-2.2) {};
	\node[draw] (coprod1) at (3,-1.1) {};
	\node[draw] (prod1) at (3.75,4.55) {};
	\node[draw] (prod2) at (2.25,5.45) {};
	\node[draw] (prod3) at (.5,6.35) {};
	\node[draw] (top) at (.5,7.45) {};
    \node[draw] (b) at (.25,0) {};
    \node[draw] (c) at (2.5,1) {};
    \node[draw] (d) at (1.4,2.5) {};
    \node[draw] (e) at (2.5,3.5) {};
    \node[fill=none] (f1) at (-1,4.9) {};
    \node[fill=none] (f2) at (0.25,4.9) {};    
    \draw[thick,directed] (base) to (coprod1);
    \draw[thick,directed,bend left=20] (coprod1) to (b);
    \draw[thick,directed,bend left=30] (b) to (prod3);
    \draw[thick,directed] (b) to (prod3);
    \draw[thick,directed] (b) to (c);
    \draw[thick,directed] (c) to (d);
    \draw[thick,directed] (c) to (e);
    \draw[thick,directed,bend right=50] (c) to (e);
    \draw[thick,directed] (d) to (prod2);
    \draw[thick,directed,bend left=50] (d) to (prod2);
    \draw[thick,directed] (d) to (e);
    \draw[thick,directed,bend left=15] (e) to (prod1);
	\draw[thick,directed,bend right=10] (coprod1) to (prod1);
	\draw[thick,directed,bend right=40] (coprod1) to (prod1);
	\draw[thick,directed,bend right=20] (prod1) to (prod2);
	\draw[thick,directed,bend right=20] (prod2) to (prod3);
	\draw[thick,directed] (prod3) to (top);
    \node[fill=none,scale=.85] (0) at (2.65,-1.65) {0};
    \node[fill=none,scale=.85] (1) at (1.7,-.6) {1};
    \node[fill=none,scale=.85] (2) at (-.8,3.9) {2};
    \node[fill=none,scale=.85] (3) at (.1,3.3) {3};
    \node[fill=none,scale=.85] (4) at (1.25,0.85) {4};
    \node[fill=none,scale=.85] (5) at (1.6,1.7) {5};
    \node[fill=none,scale=.85] (6) at (1.2,4.95) {6};
    \node[fill=none,scale=.85] (7) at (2.15,4.0) {7};
    \node[fill=none,scale=.85] (8) at (2.0,2.6) {8};
    \node[fill=none,scale=.85] (9) at (2.72,2.15) {9};
    \node[fill=none,scale=.85] (10) at (3.2,3.2) {10};
    \node[fill=none,scale=.85] (11) at (2.9,4.5) {11};
    \node[fill=none,scale=.85] (12) at (4.05,2) {12};
    \node[fill=none,scale=.85] (13) at (4.9,2.2) {13};
    \node[fill=none,scale=.85] (14) at (3.45,5.4) {14};
    \node[fill=none,scale=.85] (15) at (1.75,6.35) {15};
    \node[fill=none,scale=.85] (16) at (.0,6.95) {16};
\end{tikzpicture}
\hspace{.0in} \raisebox{60pt}{\scalebox{3}{$\Rightarrow$} \kern-27pt{\raisebox{17pt}{$\Phi$}}} \hspace{.15in}
\raisebox{66pt}{
\setlength{\tabcolsep}{2.5pt}
\scalebox{.85}{
\begin{tabular}{|>{$}c<{$}|>{$}c<{$}|>{$}c<{$}|>{$}c<{$}|}
\hline
\cellcolor{LightGray} 1 & 2 & 5 & 6 \\ \hline
3 \kern+3pt 4 & 7 \kern+3pt 8 & 9 \kern+3pt 10 & 12 \kern+3pt 13 \\ \hline
11 & \cellcolor{LightGray} 14 & \cellcolor{LightGray} 15 & \cellcolor{LightGray} 16 \\ \hline
\end{tabular}}}
\hspace{.1in} \raisebox{60pt}{\scalebox{3}{$\Rightarrow$} \kern-26pt{\raisebox{18pt}{$\chi$}}} \hspace{.15in}
\raisebox{66pt}{
\setlength{\tabcolsep}{2.5pt}
\scalebox{.85}{
\begin{tabular}{|>{$}c<{$}|>{$}c<{$}|>{$}c<{$}|>{$}c<{$}|}
\cline{2-4}
\multicolumn{1}{c|}{ } & 1 & 4 & 5 \\ \cline{1-4}
2 \kern+3pt 3 & 6 \kern+3pt 7 & 8 \kern+3pt 9 & 11 \kern+3pt 12 \\ \cline{1-4}
10 & \multicolumn{1}{c}{ } & \multicolumn{1}{c}{ } & \multicolumn{1}{c}{ } \\ \cline{1-1}
\end{tabular}}}
\caption{An example of the bijection $\psi: \pc_x^k(n,m) \rightarrow \svt(\lambda/\mu,\rho)$ for $k=3$, $x=3$, $n=3$, and $m=1$.}
\label{fig: non-closed bijection example}
\end{figure}
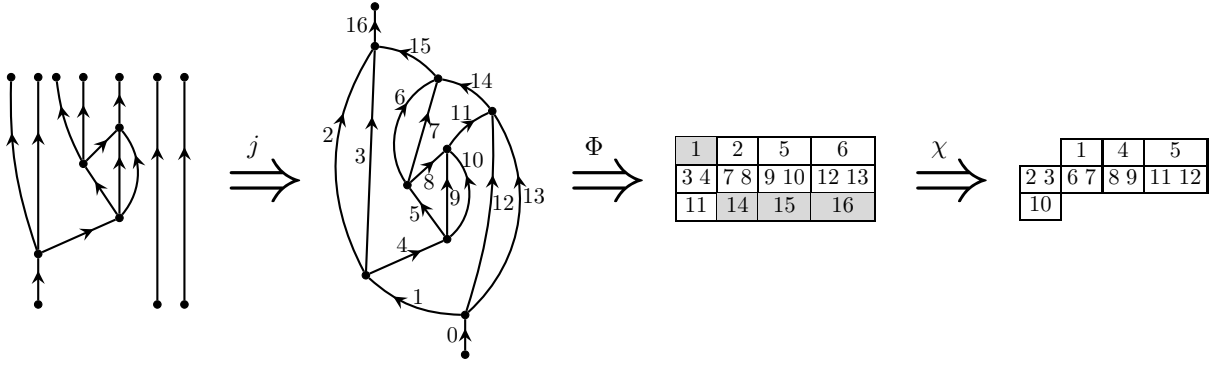

In light of Theorem \ref{thm: prograph vs tableaux bijection, non-rectangular}, one may define a modification of our depth-left first search that allows one to pass directly from an edge-labelling of $G \in \pc_x^k(n,m)$ to $\psi(G) \in \svt(\lambda/\mu,\rho)$, bypassing the justification and reindexing steps.  This $x$-fold depth-left first search is defined as below.

\begin{enumerate}
\item For any $G \in \pc_x^k(n,m)$ with $x \equiv 1 \mod(k-1)$, label the leftmost initial input of $G$ with the integer $0$.
\item After labelling the $i^{th}$ edge, determine the node subset $V_i$ from the depth-left first search of Section \ref{sec: prographs}.  If $V_i$ is non-empty, follow the procedure of Section \ref{sec: prographs} to find the edge labelled $i+1$.  If $V_i$ is empty, label the leftmost unlabelled initial input of $G$ with $i+1$
\end{enumerate}

Using the same terminology as Theorem \ref{thm: prograph vs tableaux bijection, non-rectangular}, let $\tau(G) \in \svt(\lambda/\mu,\rho)$ be the tableau that results from applying the $x$-fold depth-left first search to $G \in \pc_x^k(n,m)$, placing all integers labelling leftmost coproduct children of $G$ in the top row, placing all integers labelling product children of $G$ in the bottom row, and placing all remaining non-zero integers (including those labelling non-leftmost initial inputs of $G$) in the middle row.  This is in fact that same tableau that results from the composite bijection of Theorem \ref{thm: prograph vs tableaux bijection, non-rectangular}:

\begin{corollary}
\label{thm: prograph vs tableaux bijection, non-rectangular corollary}
Let $\psi: \pc_x^k(n,m) \rightarrow \svt(\lambda/\mu,\rho)$ be as in the proof of Theorem \ref{thm: prograph vs tableaux bijection, non-rectangular}.  For any $G \in \pc_x^k(n,m)$ with $x \equiv 1 \mod(k-1)$, $\tau(G) = \psi(G)$.
\end{corollary}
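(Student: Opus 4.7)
The plan is to show that the $x$-fold depth-left first search on $G$ produces, up to a constant shift on labels, the same edge labelling on the edges of $G$ that the standard depth-left first search produces on the justified prograph $j(G)$, and then to verify that the row-placement rules of $\tau$ and of $\chi \circ \Phi$ agree on those labels. Set $\Delta = (x-1)/(k-1)$ and $\Delta' = (y-1)/(k-1)$ where $y = (n-m)(k-1)+x$, as given by Proposition~\ref{thm: number of output strands}. Let $L^{\tau}$ denote the labelling produced by $\tau$ applied to $G$, and $L^{j}$ the labelling produced by the standard depth-left search on $j(G)$. The first step is to pin down the ``boilerplate'' labels introduced by justification: the bottom of $j(G)$ is a chain of $\Delta$ new coproducts joined along their leftmost-child edges, so labels $0,1,\ldots,\Delta-1$ necessarily land on the chain of inputs to these new coproducts, and label $\Delta$ lands on the leftmost initial input of $G$, matching $L^{\tau}$'s label $0$ on that same edge.

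Next I would prove by induction on the label index that $L^{j}(e) = L^{\tau}(e) + \Delta$ for every edge $e$ of $G$. The inductive step splits into two cases. In the first case, the $x$-fold search's set $V_i$ is nonempty, so it labels the leftmost unlabelled output of some $G$-node $v$; one must check that the $j(G)$ search picks the same $v$ at step $i+\Delta$. The key observation is that the ``highest labelled edge terminating at an element of $V$'' rule cannot select a new coproduct over a live node of $G$: the only labelled edges terminating at new coproducts are the chain inputs carrying labels $0,\ldots,\Delta-1$, which are all strictly less than $\Delta$, whereas any terminating edge at an eligible $G$-node carries a label of at least $\Delta$. In the second case, the $x$-fold search's $V_i$ is empty, so $\tau$ jumps to the leftmost unlabelled initial input of $G$; in $j(G)$, the still-active new bottom coproducts are then the only remaining candidates in $V_{i+\Delta}$, and the standard depth-left search emits a label on their next non-leftmost child, which is precisely that next initial input of $G$. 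A brief mirror-image argument handles the top: the $\Delta'$ right-aligned new products receive only the final $\Delta'$ labels and never intrude on edges of $G$ during the main body of the search.

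Once the label correspondence $L^{j}(e) = L^{\tau}(e) + \Delta$ is established on edges of $G$, the two sorting rules agree cell-for-cell: leftmost coproduct children of $G$ go to the top row in both tableaux; product children of $G$ go to the bottom row in both; and every remaining edge of $G$ — including the non-leftmost initial inputs of $G$, which in $j(G)$ are exactly the non-leftmost outputs of the new bottom coproducts (and hence middle-row entries of $\Phi(j(G))$) — goes to the middle row in both. The cells deleted by $\chi$ are precisely the first $\Delta$ top-row cells and the last $\Delta'$ bottom-row cells of $\Phi(j(G))$, and these hold exactly the labels $1,\ldots,\Delta$ and the final $\Delta'$ labels — the labels attached to boilerplate edges that have no counterpart in $L^{\tau}$. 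After $\chi$ reindexes the surviving entries by subtracting $\Delta$, the tableau $\chi(\Phi(j(G)))$ equals $\tau(G)$ cell by cell, which is exactly the required identity $\psi(G) = \tau(G)$.

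The main obstacle will be the rigorous handling of the second case of the inductive step: verifying that each ``jump'' of the $x$-fold search to a new initial input of $G$ corresponds in $j(G)$ to the depth-left search descending back along the chain of new coproducts and emitting a label on the appropriate non-leftmost child. This amounts to maintaining, as an inductive invariant, an exact description of which new bottom coproducts lie in $V_{i+\Delta}$ at each step, and confirming that the ordering among their partially labelled output sets — dictated by the leftmost-priority rule — matches the left-to-right order in which $\tau$ visits the initial inputs of $G$.
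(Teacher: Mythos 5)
Your proposal is correct and follows essentially the same route as the paper's proof: it reduces the claim to showing that the $i$-th edge of the $x$-fold search on $G$ is the $\left(i+\frac{x-1}{k-1}\right)$-th edge of the ordinary depth-left first search on $j(G)$, establishes this by induction with the same two cases (the chain inputs to the new bottom coproducts carry the smallest labels, so they never outrank an eligible node of $G$, and their label ordering reproduces $\tau$'s left-to-right visits to the initial inputs), and then matches rows and the cells deleted by $\chi$. Your deferred claim that the new top products receive only the final $\frac{y-1}{k-1}$ labels is treated at the same level of detail in the paper, which simply recalls that assertion from the proof of Theorem \ref{thm: prograph vs tableaux bijection, non-rectangular}.
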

\begin{proof}
Recall that justification of $G$ introduces precisely $\frac{x-1}{k-1}$ leftmost coproduct children that receive the first $\frac{x-1}{k-1}$ nonzero labels in the depth-left first search on $j(G)$, as well as $\frac{y-1}{k-1}$ product children that receive the final $\frac{y-1}{k-1}$ labels the depth-left first search on $j(G)$.  As these are precisely the entries of $\Phi \circ j(G)$ that are deleted in the final stage of $\psi$, we merely need to argue that the depth-left first search of Section \ref{sec: prographs} labels the remaining edges of $j(G)$ in the same order that the $x$-fold depth-left first search labels the edges of $G$.  In particular, we need to show that the $i^{th}$ edge from the $x$-fold depth-left first search on $G$ corresponds to the $(i+\frac{x-1}{k-1})^{th}$ edge from our original depth-left first search on $j(G)$.

Inducting on $i$, consider the two alogorithms after the labelling of the $i^{th}$ edge of $G$.  If the set $V_i$ is non-empty for $G$, the set $V_{(i+\frac{x-1}{k-1})}$ is non-empty for $j(G)$.  Since the inputs to the initial coproduct nodes that appear only in $j(G)$ have lower edge labels than all other edges in $j(G)$, the element of $V_i$ in $G$ with the largest input corresponds to the element of $V_{(i+\frac{x-1}{k-1})}$ in $j(G)$ with the largest input.  This leads to equivalent placements of the next edge label in both graphs.  Now if $V_i$ is empty for $G$, it must be the case that $V_{(i+\frac{x-1}{k-1})}$ for $j(G)$ consists solely of nodes from the justification's $\frac{x-1}{k-1}$ initial coproducts.  As the edge labels on the inputs to these initial coproducts always decrease from left to right, the next edge labelled in $j(G)$ is always the leftmost output of the initial coproducts that has yet to be labelled.  These initial coproduct children of $j(G)$ precisely correspond to initial inputs in the non-justified graph $G$, implying that the next edge of $G$ to be labelled by the $x$-fold depth-left first search is the equivalent (non-leftmost) initial input of $G$.
\end{proof}

\subsection{The Sch\"utzenberger Involution}
\label{subsec: schutzenberger}

For any rectangular shape $\lambda \vdash N$, the Sch\"utzenberger involution is a map $f: S(\lambda) \rightarrow S(\lambda)$ that rotates $T \in S(\lambda)$ by 180 degrees and then renumbers entries via $a \mapsto N - a + 1$.  As described by Drube \cite{Drube}, one may define an analogue of the Sch\"utzenberger involution for standard set-valued Young tableaux.  For any rectangular shape $\lambda$ and row-constant density $\rho$, the set-valued Sch\"utzenberger involution $f : \svt(\lambda,\rho) \rightarrow  \svt(\lambda,\rho')$ is similarly defined via 180-degree rotation of $T \in \svt(\lambda,\rho)$, followed by a reversal in the order of entries in the resulting tableaux.  Here $\rho' = (\rho_m,\hdots,\rho_1)$ if $\rho = (\rho_1,\hdots,\rho_m)$, meaning only ``vertically symmetric" densities are preserved by $f$.

Now define a rotation operator $r: \pc_x^k(n,m) \rightarrow \pc_y^k(m,n)$ on (not-necessarily closed) $k$-ary prographs that corresponds to 180-degree rotation and a reversal in the orientation of all edges.  In Theorem \ref{thm: schutzenberger} we will show that a specialization of this operator to any closed $k$-ary prograph $G$ is compatible with the Sch\"utzenbeger involution on the associated set-valued tableaux $\Phi(G)$ from Theorem \ref{thm: prograph vs tableaux bijection, rectangular}, but first we need to analyze how rotation effects our edge-labelling algorithms.  It is in fact that case that the $x$-fold depth-left first search of Subsection \ref{subsec: non-closed prographs} labels the edges of $r(G) \in \pc_y^k(m,n)$ in an order that exactly reverses the order in which it labels the corresponding edges of $G \in \pc_x^k(n,m)$:

\begin{proposition}
\label{thm: rotated depth-left first search}
For any $k \geq 2$, $n,m \geq 0$, $x \geq 1$, set $N = x + kn + m - 1$ and consider the rotation operator $r: \pc_x^k(n,m) \rightarrow \pc_y^k(m,n)$.  For any edge $e$ of $G$, if the $x$-fold depth-left first search labels $e$ with the integer $i$, then the $x$-fold depth-left first search labels the corresponding edge of $r(G)$ with $N-i$.
\end{proposition}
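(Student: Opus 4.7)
The plan is to prove $\sigma_{r(G)}(N-i) = \iota(\sigma_G(i))$ by induction on $i$, with the equivalent state invariant that after $i+1$ steps of $G$'s search and $N-i$ steps of $r(G)$'s search, the two labeled edge-sets partition $E(G)$ under $\iota$. An auxiliary lemma will be required: on any $x$-fold $k$-ary prograph $H$, the last edge labeled by the $x$-fold depth-left first search is the rightmost terminal output of $H$. This lemma follows from the ``leftmost-first'' preference of the algorithm --- among the outputs of any single vertex, the rightmost is labeled last --- combined with an argument propagating this rightmost-last property through successive vertex exhaustions in the full graph. Granted the lemma, the base case $i=0$ is immediate: $\sigma_G(0)$ is the leftmost initial input of $G$, whose $\iota$-image is the rightmost terminal output of $r(G)$, which equals $\sigma_{r(G)}(N)$.

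For the inductive step, assume $\sigma_{r(G)}(N-k) = \iota(\sigma_G(k))$ for all $k \leq i$; then the state of $r(G)$'s search just after labeling label $N-i-1$ equals $E(r(G)) \setminus \iota\bigl(\{\sigma_G(0),\ldots,\sigma_G(i)\}\bigr)$. Showing $\iota(\sigma_G(i+1)) = \sigma_{r(G)}(N-i-1)$ then splits by cases on how $\sigma_G(i+1)$ is determined in $G$'s search. Case~(A): if the analogue of $V_i$ at the current state of $G$'s search is nonempty, then $\sigma_G(i+1)$ is the leftmost unlabeled output of the vertex $v$ with maximal most-recently-labeled input. Under $\iota$, $v$ maps to a vertex $v'$ of $r(G)$ whose inputs and outputs are swapped and whose left-right ordering is flipped, so $\iota(\sigma_G(i+1))$ is the rightmost $r(G)$-input of $v'$ not yet in the state of $r(G)$'s search. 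One verifies that this edge is the one labeled $N-i-1$ by $r(G)$'s algorithm, via a dualization of the ``highest-labeled-input'' priority. Case~(B): if the analogue of $V_i$ in $G$'s search is empty, then $\sigma_G(i+1)$ is the leftmost unlabeled initial input of $G$, whose $\iota$-image is the rightmost still-unlabeled terminal output of $r(G)$; this case is handled by a local application of the auxiliary lemma to the unprocessed portion of $r(G)$.

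The main obstacle is the dualization in Case~(A). The priority rule ``highest-labeled input in $V_i$'' depends on the full linear order of previously labeled edges, not merely on the labeled set, so translating this rule across $\iota$ requires the inductive hypothesis at the individual-label level --- specifically, that $\sigma_G(k)$ in $G$ and $\sigma_{r(G)}(N-k)$ in $r(G)$ occupy ``mirror'' positions in the two search trajectories for all $k \leq i$. Verifying that the two algorithms' cursor vertices correspond under rotation, and that the rule ``leftmost unlabeled output of the most-recently-activated $V$-vertex'' on $G$ dualizes to ``rightmost unlabeled input of the latest-to-be-completed vertex'' on $r(G)$, will constitute the technical core of the proof.
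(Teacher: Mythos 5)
There is a genuine gap at exactly the point you yourself flag as ``the technical core'': Case~(A) is never argued, and the induction scheme you set up cannot supply the information needed to argue it. Your hypothesis $\sigma_{r(G)}(N-k)=\iota(\sigma_G(k))$ for $k\leq i$ pins down only the \emph{late} labels $N-i,\ldots,N$ of $r(G)$'s search, whereas the edge that $r(G)$'s algorithm assigns label $N-i-1$ is determined by the priority rule applied to the \emph{early} part of its trajectory --- the order in which the labels $0,\ldots,N-i-2$ were placed. Under the very correspondence you are trying to establish, those early labels of $r(G)$ correspond to the labels $i+2,\ldots,N$ of $G$'s search, i.e.\ to the future of $G$'s trajectory, which the induction has not yet reached; your set-level partition invariant does not recover their order. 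So ``dualizing the highest-labelled-input priority'' is not a local verification that follows from the mirror positions for $k\leq i$: as structured, the step from $i$ to $i+1$ needs information that is only available once the whole theorem is known, and the argument is circular. (Your auxiliary lemma, that the final label always lands on the rightmost terminal output, is correct and is also used, essentially without comment, in the paper; it is not where the difficulty lies.)

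The paper sidesteps this by inducting on the size of the prograph rather than stepping through a fixed one: it takes the edge of $G$ labelled $N$ (the rightmost output), which is either a free strand, a product output, or a rightmost coproduct child, deletes or resolves the node at its source to obtain a smaller prograph $\widetilde{G}$ with maximal label $N-1$, applies the inductive hypothesis to the pair $\widetilde{G}$, $r(\widetilde{G})$, and then checks that reinstating the removed structure on each side shifts every affected label by exactly one (see Figure~\ref{fig: edge labelling in rotation}), yielding $i\mapsto N-i$. If you want to rescue a within-one-graph induction you would have to strengthen the invariant so that it determines the entire order of $r(G)$'s early labels --- which is essentially the global statement itself --- so the size induction is the workable route; alternatively, rewrite your plan as a deletion argument on the last-labelled edge and your Case~(B) and the auxiliary lemma fold naturally into the paper's free-strand case.
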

\begin{proof}
We proceed by induction on the maximum edge label $N \geq 0$.  The $N = 0$ case is immediate, as both $G$ and $r(G)$ consist of a single edge labelled $0$.  For $N>0$, consider the edge $e$ of $G$ that receives the label $N$, which is always the rightmost output of $G$.  There are three options: 1) $e$ is a ``free strand" that does not originate at a product or coproduct, 2) $e$ is a product child, or 3) $e$ is a rightmost coproduct child.

If $e$ is a free strand, simply deleting $e$ produces a valid prograph $\widetilde{G}$ with maximal edge label $N-1$.  By the inductive hypothesis, the $x$-fold depth-left first search labels corresponding edges in $\widetilde{G}$ and $r(\widetilde{G})$ according to $i \mapsto N-1-i$.  Inserting a free strand (labelled $0$) on the left side of $G$ recovers $r(G)$, and effects our edge labelling in that the label of all edges in $r(\widetilde{G})$ are increased by $1$.  It follows that the $x$-fold depth-left first search labels corresponding edges in $G$ and $r(G)$ according to $i \mapsto N-i$.

If $e$ is a product child, we eliminate the product node at the source of $e$ as in the first row of Figure \ref{fig: edge labelling in rotation}, yielding a prograph $\widetilde{G}$ with $k-1$ additional outputs but maximal edge label $N-1$.  Applying the inductive hypothesis allows us to relate corresponding edge labels of $\widetilde{G}$ and $r(\widetilde{G})$ by $i \mapsto N-1-i$.  We then pre-compose $r(\widetilde{G})$ with an additional coproduct whose outputs are the $k$ leftmost inputs of $r(\widetilde{G})$, as in the top row of Figure \ref{fig: edge labelling in rotation}.  This recovers $r(G)$ and effects our edge labelling in that all edges apart from the new coproduct input are increased by $1$.  It follows that the $x$-fold search labels corresponding edges in $G$ and $r(G)$ according to $i \mapsto N-i$.

Lastly, if $e$ is a righmost coproduct child we eliminate the coproduct node at the source of $e$ as in the bottom row of Figure \ref{fig: edge labelling in rotation}, identifying the input of that coproduct with its leftmost output while extending all remaining outputs of to the bottom of the prograph as $k-1$ new inputs.  As the resulting graph $\widetilde{G}$ has maximal edge label $N-1$, we may once again relate corresponding edge labels of $\widetilde{G}$ and $r(\widetilde{G})$ by $i \mapsto N-1-i$.  Introducing a new product node into $r(\widetilde{G})$ as in the bottom row of Figure \ref{fig: edge labelling in rotation} recovers $r(G)$ and effects our $x$-fold search in such a way that the corresponding edges of $G$ and $r(G)$ are labelled according to $i \mapsto N-i$.
\end{proof}

\begin{figure}[ht!]
\centering
\begin{tikzpicture}
[scale=.55,auto=left,every node/.style={circle,fill=black,inner sep=1.0pt}]
    \node[draw] (top) at (0,4) {};
    \node[draw] (b) at (0,2.8) {};
    \node[fill=none] (a1) at (-1.3,1.5) {};
    \node[fill=none] (a2) at (1.3,1.5) {};
	\draw[thick] (-1.5,0) rectangle (1.5,1.6);
    \draw[thick,directed] (a1) to (b) {};
    \draw[thick,directed] (a2) to (b) {};
    \draw[thick,directed] (b) to (top) {};
    \node[fill=none] at (0,1.9) {$\cdots$};
    \node[fill=none] at (0,.8) {$\widetilde{G}$};
    \node[fill=none] at (.5,3.4) {\scalebox{.9}{$N$}};
    \node[fill=none] at (-1.2,2.1) {\scalebox{.9}{$a_1$}};
    \node[fill=none] at (1.55,2.15) {\scalebox{.9}{$N \kern-3pt- \kern-3pt1$}};
\end{tikzpicture}
\hspace{.05in} \raisebox{26pt}{\scalebox{3}{$\Rightarrow$}} \hspace{.05in}
\begin{tikzpicture}
[scale=.55,auto=left,every node/.style={circle,fill=black,inner sep=1.0pt}]
    \node[draw] (top1) at (-1.3,4) {};
    \node[draw] (top2) at (1.3,4) {};
    \node[fill=none] (a1) at (-1.3,1.5) {};
    \node[fill=none] (a2) at (1.3,1.5) {};
	\draw[thick] (-1.5,0) rectangle (1.5,1.6);
    \draw[thick,directed] (a1) to (top1) {};
    \draw[thick,directed] (a2) to (top2) {};
    \node[fill=none] at (0,2.7) {$\cdots$};
    \node[fill=none] at (0,.8) {$\widetilde{G}$};
    \node[fill=none] at (-1.7,2.5) {\scalebox{.9}{$a_1$}};
    \node[fill=none] at (2.05,2.5) {\scalebox{.9}{$N \kern-3pt- \kern-3pt1$}};
\end{tikzpicture}
\hspace{.05in} \raisebox{26pt}{\scalebox{3}{$\Rightarrow$} \kern-24pt{\raisebox{16pt}{$r$}}} \hspace{.25in}
\raisebox{-8pt}{\begin{tikzpicture}
[scale=.55,auto=left,every node/.style={circle,fill=black,inner sep=1.0pt}]
    \node[draw] (bottom1) at (-1.3,0) {};
    \node[draw] (bottom2) at (1.3,0) {};
    \node[fill=none] (a1) at (-1.3,2.5) {};
    \node[fill=none] (a2) at (1.3,2.5) {};
	\draw[thick] (-1.5,2.4) rectangle (1.5,4);
    \draw[thick,directed] (bottom1) to (a1) {};
    \draw[thick,directed] (bottom2) to (a2) {};
    \node[fill=none] at (0,1.3) {$\cdots$};
    \node[fill=none] at (0,3.2) {$r(\widetilde{G})$};
    \node[fill=none] at (-1.65,1.0) {\scalebox{.9}{$0$}};
    \node[fill=none] at (2.85,1.0) {\scalebox{.9}{$(N \kern-3pt- \kern-3pt1) \kern-1pt- \kern-1pta_1$}};
\end{tikzpicture}}
\hspace{-.2in} \raisebox{26pt}{\scalebox{3}{$\Rightarrow$}} \hspace{.15in}
\raisebox{-2pt}{\begin{tikzpicture}
[scale=.55,auto=left,every node/.style={circle,fill=black,inner sep=1.0pt}]
    \node[draw] (bottom) at (0,0) {};
    \node[draw] (b) at (0,1.2) {};
    \node[fill=none] (a1) at (-1.3,2.5) {};
    \node[fill=none] (a2) at (1.3,2.5) {};
	\draw[thick] (-1.5,2.4) rectangle (1.5,4);
    \draw[thick,directed] (b) to (a1) {};
    \draw[thick,directed] (b) to (a2) {};
    \draw[thick,directed] (bottom) to (b) {};
    \node[fill=none] at (0,2.1) {$\cdots$};
    \node[fill=none] at (0,3.2) {$r(\widetilde{G})$};
    \node[fill=none] at (.4,.5) {\scalebox{.9}{$0$}};
    \node[fill=none] at (-.95,1.5) {\scalebox{.9}{$1$}};
    \node[fill=none] at (1.5,1.5) {\scalebox{.9}{$N \kern-3pt- \kern-3pta_1$}};
\end{tikzpicture}}

\vspace{.15in}

\begin{tikzpicture}
[scale=.55,auto=left,every node/.style={circle,fill=black,inner sep=1.0pt}]
	\node[fill=none] (a) at (0,1.4) {}; 
    \node[draw] (b) at (0,2.2) {};
    \node[fill=none] (c1) at (-2.4,3.7) {};
    \node[fill=none] (c2) at (0,3.7) {};
    \node[draw] (top) at (2,5) {};
	\draw[thick] (-1.5,.1) rectangle (1.5,1.5);
	\draw[thick] (-2.7,3.6) rectangle (.3,5.0);
    \draw[thick,directed] (a) to (b) {};
    \draw[thick,directed,bend left=25] (b) to (c1) {};
    \draw[thick,directed] (b) to (c2) {};
	\draw[thick,directed,bend right=40] (b) to (top) {};
	\node[fill=none] at (-.9,3.1) {$\cdots$};
	\node[fill=none] at (0,.8) {$G_1$};
    \node[fill=none] at (-1.2,4.2) {$G_2$};
    \node[fill=none] at (0.75,1.8) {\scalebox{.9}{$b_1 \kern-3pt- \kern-3pt1$}};
    \node[fill=none] at (-1.9,2.5) {\scalebox{.9}{$b_1$}};
    \node[fill=none] at (0.65,3.1) {\scalebox{.9}{$b_{k\kern-1pt-\kern-1pt1}$}};
    \node[fill=none] at (1.8,2.9) {\scalebox{.9}{$N$}};
\end{tikzpicture}
\hspace{.0in} \raisebox{32pt}{\scalebox{3}{$\Rightarrow$}} \hspace{.0in}
\begin{tikzpicture}
[scale=.55,auto=left,every node/.style={circle,fill=black,inner sep=1.0pt}]
	\node[fill=none] (a) at (-2.4,1.4) {}; 
    \node[fill=none] (c1) at (-2.4,3.7) {};
    \node[fill=none] (cnew) at (-2.0,3.7) {};
    \node[fill=none] (c2) at (-.1,3.7) {};
    \node[draw] (top) at (2,5) {};
    \node[draw] (bottom1) at (-.4,0) {};
    \node[draw] (bottom2) at (1.5,0) {};
    \node[draw] (bottom3) at (2,0) {};
	\draw[thick] (-3.9,.1) rectangle (-.9,1.5);
	\draw[thick] (-2.7,3.6) rectangle (.3,5.0);
    \draw[thick,directed] (a) to (c1) {};
    \draw[thick,directed,bend right=15] (bottom1) to (cnew) {};
    \draw[thick,directed,bend right=15] (bottom2) to (c2) {};
    \draw[thick,directed] (bottom3) to (top) {};
	\node[fill=none] at (-.2,2.5) {$\cdots$};
	\node[fill=none] at (-2.4,.8) {$G_1$};
    \node[fill=none] at (-1.2,4.2) {$G_2$};
    \node[fill=none] at (-3.0,2.3) {\scalebox{.9}{$b_1 \kern-3pt- \kern-3pt1$}};
    \node[fill=none] at (.2,.5) {\scalebox{.9}{$b_2 \kern-3pt- \kern-3pt1$}};
    \node[fill=none] at (.45,1.3) {\scalebox{.85}{$b_{k\kern-1pt-\kern-1pt1} \kern-4pt- \kern-3pt1$}};
    \node[fill=none] at (1.35,3.8) {\scalebox{.9}{$N\kern-3pt-\kern-3pt1$}};
\end{tikzpicture}
\hspace{.05in} \raisebox{32pt}{\scalebox{3}{$\Rightarrow$} \kern-24pt{\raisebox{16pt}{$r$}}} \hspace{.25in}
\begin{tikzpicture}
[scale=.55,auto=left,every node/.style={circle,fill=black,inner sep=1.0pt}]
	\node[fill=none] (a) at (2.4,-1.4) {}; 
    \node[fill=none] (c1) at (2.4,-3.7) {};
    \node[fill=none] (cnew) at (2.0,-3.7) {};
    \node[fill=none] (c2) at (.1,-3.7) {};
    \node[draw] (top) at (-2,-5) {};
    \node[draw] (bottom1) at (.4,0) {};
    \node[draw] (bottom2) at (-1.5,0) {};
    \node[draw] (bottom3) at (-2,0) {};
	\draw[thick] (3.9,-.1) rectangle (.9,-1.5);
	\draw[thick] (2.7,-3.6) rectangle (-.3,-5.0);
    \draw[thick,directed] (c1) to (a) {};
    \draw[thick,directed,bend left=15] (cnew) to (bottom1) {};
    \draw[thick,directed,bend left=15] (c2) to (bottom2) {};
    \draw[thick,directed] (top) to (bottom3) {};
	\node[fill=none] at (.2,-2.5) {$\cdots$};
	\node[fill=none] at (2.4,-.8) {$r(G_1)$};
    \node[fill=none] at (1.2,-4.35) {$r(G_2)$};
    \node[fill=none] at (3.2,-2.8) {\scalebox{.9}{$N \kern-3pt- \kern-3ptb_1$}};
    \node[fill=none] at (.15,-1.9) {\scalebox{.9}{$N \kern-3pt- \kern-3ptb_2$}};
    \node[fill=none] at (-.49,-0.5) {\scalebox{.83}{$N \kern-4pt- \kern-3ptb_{k \kern-1pt- \kern-1pt1}$}};
    \node[fill=none] at (-1.75,-3.0) {\scalebox{.9}{$0$}};
\end{tikzpicture}
\hspace{.1in} \raisebox{32pt}{\scalebox{3}{$\Rightarrow$}} \hspace{.0in}
\begin{tikzpicture}
[scale=.55,auto=left,every node/.style={circle,fill=black,inner sep=1.0pt}]
	\node[fill=none] (a) at (0,-1.4) {}; 
    \node[draw] (b) at (0,-2.2) {};
    \node[fill=none] (c1) at (2.4,-3.7) {};
    \node[fill=none] (c2) at (0,-3.7) {};
    \node[draw] (top) at (-2,-5) {};
	\draw[thick] (1.5,-.1) rectangle (-1.5,-1.5);
	\draw[thick] (2.7,-3.6) rectangle (-.3,-5.0);
    \draw[thick,directed] (b) to (a) {};
    \draw[thick,directed,bend right=25] (c1) to (b) {};
    \draw[thick,directed] (c2) to (b) {};
	\draw[thick,directed,bend left=40] (top) to (b) {};
	\node[fill=none] at (.8,-2.8) {$\cdots$};
	\node[fill=none] at (0,-.8) {$r(G_1)$};
    \node[fill=none] at (1.2,-4.3) {$r(G_2)$};
    \node[fill=none] at (-1.15,-1.9) {\scalebox{.9}{$N \kern-3pt- \kern-3ptb_1 \kern-3pt+ \kern-3pt1$}};
    \node[fill=none] at (2.0,-2.4) {\scalebox{.9}{$N \kern-3pt- \kern-3ptb_1$}};
    \node[fill=none] at (0.9,-3.35) {\scalebox{.83}{$N \kern-3pt- \kern-3ptb_{k\kern-1pt-\kern-1pt1}$}};
    \node[fill=none] at (-1.8,-3.1) {\scalebox{.9}{$0$}};
\end{tikzpicture}
\caption{The effect of the rotation operator upon edge labels surrounding the final product node or coproduct node of $G \in \pc_x^k(n,m)$, utilizing the ``resolution" techniques from the proof of Proposition \ref{thm: rotated depth-left first search}.  In the top row, $\widetilde{G}$ may include additional output edges that lie to the left of the edge labelled $N$.}
\label{fig: edge labelling in rotation}
\end{figure}
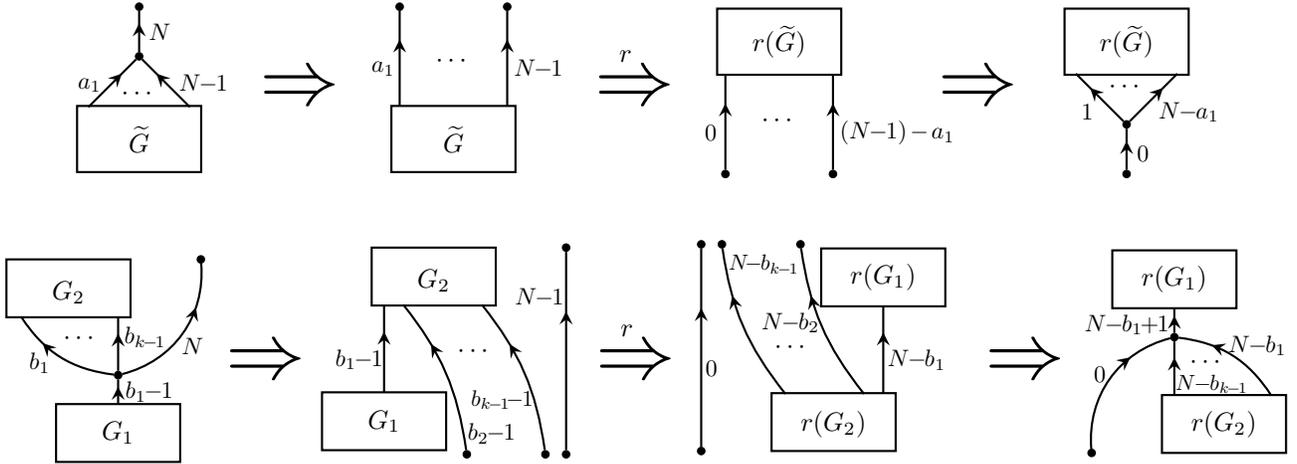

In the case of $x=1$ and $n=m$, the rotation operator reduces to an involution $r:\pc^k(n) \rightarrow \pc^k(n)$ of closed $k$-ary prographs.  Proposition \ref{thm: rotated depth-left first search} then states that the depth-left first search of Section \ref{sec: prographs} relates corresponding edges of $G$ and $r(G)$ according to $i \mapsto (k+1)n - i$.  This allows us to derive the following relationship between the rotation operator on closed $k$-ary prographs, the Sch\"utzenberger involution on rectangular set-valued tableaux, and the bijection $\Phi$ from Theorem \ref{thm: prograph vs tableaux bijection, rectangular}.  See Figure \ref{fig: schutzenberger versus rotation} for an example of this compatibility.

\begin{theorem}
\label{thm: schutzenberger}
Fix $k \geq 2$ and $n \geq 1$, and let $\rho = (1,k-1,1)$.  For $\Phi : \pc^k(n) \rightarrow \svt(\lambda,\rho)$ defined as in Theorem \ref{thm: prograph vs tableaux bijection, rectangular}, the rotation operator $r: \pc^k(n) \rightarrow \pc^k(n)$, and the set-valued Sch\"utzenberger involution $f : \svt(\lambda,\rho) \rightarrow  \svt(\lambda,\rho)$, we have $\Phi \circ r = f \circ \Phi$.
\end{theorem}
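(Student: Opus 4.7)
The plan is to show that $\Phi(r(G))$ and $f(\Phi(G))$ contain the same entries in each of their three rows; once this is established, equality of the two tableaux follows because for the density $\rho=(1,k-1,1)$ a standard tableau is uniquely determined by its row-sets (the top and bottom rows have one entry per cell, while the middle row must be filled by sorting its $n(k-1)$ entries in increasing order and grouping them $k-1$ at a time to maintain row-standardness). The starting point is Proposition \ref{thm: rotated depth-left first search} specialized to $x=1$ and $m=n$: the edge of $r(G)$ corresponding to the edge of $G$ labeled $i$ carries the label $N-i$, where $N=(k+1)n$. Because $r$ reverses orientations, the output role of $r(e)$ in $r(G)$ (which determines the row of its label in $\Phi(r(G))$) is controlled by the input role of $e$ in $G$: rightmost inputs of products in $G$ correspond to leftmost coproduct children in $r(G)$ (top row of $\Phi(r(G))$), non-rightmost inputs of products correspond to non-leftmost coproduct children (middle row), and coproduct inputs in $G$ correspond to product children (bottom row).

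The heart of the argument is a local trichotomy for consecutive labels in the depth-left first search of $G$: for each $0 \leq i < N$, (c$\to$A) if $e_i$ is the input of a coproduct then $e_{i+1}$ is a leftmost coproduct child; (a$\to$C) if $e_i$ is the rightmost input of a product then $e_{i+1}$ is a product output; (b$\to$B) if $e_i$ is a non-rightmost input of a product then $e_{i+1}$ is a non-leftmost coproduct child. Case (c$\to$A) is immediate: once a coproduct's single input is labeled $i$, that coproduct lies in $V_i$ with maximal input label $i$, so its leftmost unlabeled child receives label $i+1$. Case (a$\to$C) can be read off from the inverse construction $\Phi_2$ of Theorem \ref{thm: prograph vs tableaux bijection, rectangular}, which plants each product so that its rightmost input is $e_{a-1}$ and its output is $e_a$; hence the rightmost input of any product carries the highest label among the product's $k$ inputs, and as soon as it is labeled the product joins $V_i$ with maximal input and its output is labeled next. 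Case (b$\to$B) then follows by counting: the previous two cases already give bijections between the $n$ edges of input role (c)/(a) and the $n$ edges of output role (A)/(C) respectively, leaving the $n(k-1)$ edges of input role (b) to map exactly onto the $n(k-1)$ edges of output role (B).

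Putting the two steps together, for each $i \in \{0,\ldots,N-1\}$ the row containing $N-i$ in $\Phi(r(G))$ is determined by the input role of $e_i$, which via the trichotomy is the top-bottom swap (middle row fixed) of the row containing $i+1$ in $\Phi(G)$. Since $f$ acts on $\Phi(G)$ by exactly this top-bottom swap together with the relabeling $a \mapsto N+1-a$, and since $N+1-(i+1)=N-i$, the row-sets of $\Phi(r(G))$ and $f(\Phi(G))$ coincide, which forces $\Phi(r(G))=f(\Phi(G))$.

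The main obstacle I anticipate is justifying case (a$\to$C) without circularly invoking $\Phi_2$. A clean self-contained alternative is the lemma that in any $G \in \pc^k(n)$ the $k$ inputs of every product node are labeled in strictly increasing order from left to right; this should follow by induction on $n$ using the planarity of prographs, which forces the subgraphs feeding into successive inputs of a product to be planarly disjoint and arranged from left to right, with the ``leftward'' preference of the depth-left first search labeling them in order.
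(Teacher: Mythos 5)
Your proposal is correct and follows essentially the same route as the paper: both arguments rest on Proposition \ref{thm: rotated depth-left first search} together with the observation that consecutive labels of the depth-left first search straddle a single node (your trichotomy, the paper's ``$a$ is a leftmost coproduct child iff $a-1$ labels that coproduct's input''), and both conclude by matching row membership and using that the row sets determine a tableau of density $(1,k-1,1)$. The only divergence is that the paper treats just the top and bottom rows and gets the product-output case for free by applying the coproduct case to $r(G)$ and using that $r$ is an involution, thereby sidestepping your case (a$\to$C) entirely; your circularity worry there is unfounded in any event, since the fact you need (once a product's rightmost input is labelled $i-1$, its output is labelled $i$) is exactly what is asserted in the case analysis in the proof of Theorem \ref{thm: prograph vs tableaux bijection, rectangular}, which is established well before this theorem, and your middle-row counting step is likewise sound but unnecessary once the top and bottom rows are matched.
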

\begin{proof}
Take arbitrary $G \in \pc^k(n)$ and set $N = (k+1)n$, so that $G$ contains $N+1$ total edges and the cells of $\Phi(G) \in \svt(\lambda,\rho)$ are filled with $\lbrace 1,\hdots,N \rbrace$.  We show that leftmost coproduct children in $G$ correspond to bottom row entries in both $\Phi \circ r (G)$ and $f \circ \Phi(G)$, while product outputs in $G$ correspond to top row entries in both $\Phi \circ r (G)$ and $f \circ \Phi(G)$.  This implies that $\Phi \circ r (G)$ and $f \circ \Phi(G)$ feature identical sequences of integers across their top and bottom rows, and hence must be the same tableau.

So assume $G$ has been labelled according to our depth-left first search.  By Proposition \ref{thm: rotated depth-left first search}, if an edge in $G$ is labelled with the integer $a$, then the corresponding edge in $r(G)$ is labelled with $N-a$.  The depth-left first search is defined in such a way that $a$ labels a leftmost coproduct output in $G$ if and only if $a-1$ labels the input to the same coproduct node for which $a$ labels the leftmost child.  As demonstrated in the left side of Figure \ref{fig: schutzenberger proof cases}, this means that $N-(a-1)$ labels a product output in the rotated prograph $r(G)$.  It follows that $N-a+1$ appears in the bottom row of $\Phi \circ r(G)$.  On the other hand, $a$ being a leftmost coproduct child implies that $a$ appears in the top row of $\Phi(G)$, and hence that $N-a+1$ appears in the bottom row of $f \circ \Phi(G)$.

As $r$ is an involution, the case where $a$ labels a product in $G$ follows directly from reversing the roles of $G$ and $r(G)$ in the previous paragraph.  See the right side of Figure \ref{fig: schutzenberger proof cases} for a demonstration.  In this case we may conclude that $N-a+1$ appears in the top row of both $\Phi \circ r(G)$ and $f \circ \Phi(G)$, as required.
\end{proof}

\begin{figure}[ht!]
\centering
\begin{tikzpicture}
[scale=.7,auto=left,every node/.style={circle,fill=black,inner sep=1.15pt}]
	\node[fill=none] (b) at (0,0) {};
	\node[draw] (m) at (0,1) {};
	\node[fill=none] (t1) at (-1,2) {};
	\node[fill=none] (t2) at (1,2) {}; 
	\node[fill=none] (t3) at (0.4,2) {};
    \node[fill=none] (t4) at (-0.4,2) {};
	\draw[thick] (m) to (t3);
	\draw[thick] (b) to (m);
	\node[fill=none,scale=.9] at (-0.5,0.5) {$a-1$};
	\draw[thick] (m) to (t1);
	\node[fill=none,scale=.9] at (-0.6,1.3) {$a$};
	\draw[thick] (m) to (t4);
	\draw[thick] (m) to (t2);
\end{tikzpicture}
\hspace{0.1in}
\raisebox{21pt}{\scalebox{2}{$\curvearrowright$}}
\hspace{0.1in}
\begin{tikzpicture}
[scale=.7,auto=left,every node/.style={circle,fill=black,inner sep=1.15pt}]
	\node[fill=none] (b) at (0,2) {};
	\node[draw] (m) at (0,1) {};
	\node[fill=none] (t1) at (-1,0) {};
	\node[fill=none] (t2) at (1,0) {};
	\node[fill=none] (t3) at (-.4,0) {};
	\node[fill=none] (t4) at (.4,0) {};
	\draw[thick] (m) to (t3) {};
	\draw[thick] (b) to (m);
	\node[fill=none,scale=.9] at (.95,1.5) {$N-a+1$};
	\draw[thick] (m) to (t1);
	\draw[thick] (m) to (t2);
	\node[fill=none,scale=.9] at (1.2,.55) {$N-a$};
	\draw[thick] (m) to (t4);
\end{tikzpicture}
\hspace{.75in}
\begin{tikzpicture}
[scale=.7,auto=left,every node/.style={circle,fill=black,inner sep=1.15pt}]
	\node[fill=none] (b) at (0,2) {};
	\node[draw] (m) at (0,1) {};
	\node[fill=none] (t1) at (-1,0) {};
	\node[fill=none] (t2) at (1,0) {};
	\node[fill=none] (t3) at (-.4,0) {};
	\node[fill=none] (t4) at (.4,0) {};
	\draw[thick] (m) to (t3) {};
	\draw[thick] (b) to (m);
	\node[fill=none,scale=.9] at (.2,1.5) {$a$};
	\draw[thick] (m) to (t1);
	\draw[thick] (m) to (t2);
	\node[fill=none,scale=.9] at (1.1,.6) {$a-1$};
	\draw[thick] (m) to (t4);
\end{tikzpicture}
\hspace{0.1in}
\raisebox{21pt}{\scalebox{2}{$\curvearrowright$}}
\hspace{0.0in}
\begin{tikzpicture}
[scale=.7,auto=left,every node/.style={circle,fill=black,inner sep=1.15pt}]
	\node[fill=none] (b) at (0,0) {};
	\node[draw] (m) at (0,1) {};
	\node[fill=none] (t1) at (-1,2) {};
	\node[fill=none] (t2) at (1,2) {};
	\node[fill=none] (t3) at (0.4,2) {};
    \node[fill=none] (t4) at (-0.4,2) {};
	\draw[thick] (m) to (t3);
	\draw[thick] (b) to (m);
	\node[fill=none,scale=.9] at (-0.65,0.5) {$N-a$};
	\draw[thick] (m) to (t1);
	\node[fill=none,scale=.9] at (-1.5,1.4) {$N-a+1$};
	\draw[thick] (m) to (t4);
	\draw[thick] (m) to (t2);
\end{tikzpicture}
\caption{A demonstration of how the edge labels of leftmost coproduct outputs (left) and product outputs (right) behave under 180-degree rotation of the underlying prograph.}
\label{fig: schutzenberger proof cases}
\end{figure}
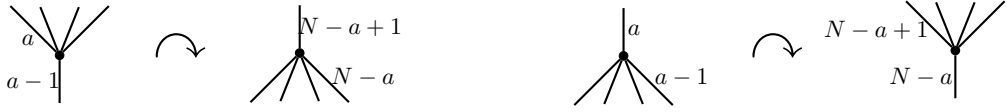

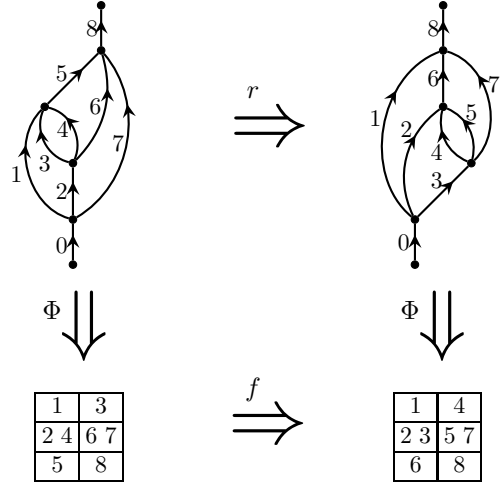
\begin{figure}[ht!]
\centering
\begin{subfigure}[b]{0.15\textwidth}
\centering
\begin{tikzpicture}
[scale=.75,auto=left,every node/.style={circle,fill=black,inner sep=1.0pt}]
    \node[draw] (a) at (0,.2) {};
    \node[draw] (b) at (0,1) {};
    \node[draw] (c) at (0,2) {};
    \node[draw] (d) at (-.5,3) {};
    \node[draw] (e) at (.5,4) {};
    \node[draw] (f) at (.5,4.8) {};
    \node[fill=none,scale=.9] (0) at (-.2,.55) {0};
    \node[fill=none,scale=.9] (1) at (-1,1.8) {1};
    \node[fill=none,scale=.9] (2) at (-.2,1.5) {2};
    \node[fill=none,scale=.9] (3) at (-.5,2.0) {3};
    \node[fill=none,scale=.9] (4) at (-.18,2.6) {4};
    \node[fill=none,scale=.9] (5) at (-.2,3.6) {5};
    \node[fill=none,scale=.9] (6) at (.4,3) {6};
    \node[fill=none,scale=.9] (7) at (.8,2.35) {7};
    \node[fill=none,scale=.9] (8) at (.34,4.4) {8};
    \draw[thick,directed] (a) to (b);
    \draw[thick,directed,bend left=60] (b) to (d);
    \draw[thick,directed,bend right=50] (b) to (e);
    \draw[thick,directed,bend left=50] (c) to (d);
    \draw[thick,directed,bend right=30] (c) to (e);
    \draw[thick,directed] (b) to (c);
    \draw[thick,directed,bend right=50] (c) to (d);
    \draw[thick,directed] (d) to (e);
    \draw[thick,directed] (e) to (f);
\end{tikzpicture}

\vspace{.1in}

\raisebox{10pt}{$\Phi$} \kern-4pt \scalebox{3}{$\Downarrow$}

\vspace{.15in}

\setlength{\tabcolsep}{2.5pt}
\scalebox{.9}{
\begin{tabular}{|>{$}c<{$}|>{$}c<{$}|}
\hline
1 & 3 \\ \hline
2 \kern+3pt 4 & 6 \kern+3pt 7 \\ \hline
5 & 8 \\ \hline
\end{tabular}}

\end{subfigure}
\begin{subfigure}[b]{0.1\textwidth}
\centering
\scalebox{3}{$\Rightarrow$} \kern-27pt \raisebox{18pt}{$r$}

\vspace{1.2in}

\scalebox{3}{$\Rightarrow$} \kern-27pt \raisebox{18pt}{$f$}
\end{subfigure}
\begin{subfigure}[b]{0.18\textwidth}
\centering
\begin{tikzpicture}
[scale=.75,auto=left,every node/.style={circle,fill=black,inner sep=1.0pt}]
    \node[draw] (a) at (0,.2) {};
    \node[draw] (b) at (0,1) {};
    \node[draw] (c) at (1,2) {};
    \node[draw] (d) at (.5,3) {};
    \node[draw] (e) at (.5,4) {};
    \node[draw] (f) at (.5,4.8) {};
    \node[fill=none,scale=.9] (0) at (-.2,.6) {0};
    \node[fill=none,scale=.9] (1) at (-.7,2.8) {1};
    \node[fill=none,scale=.9] (2) at (-.15,2.6) {2};
    \node[fill=none,scale=.9] (3) at (.39,1.7) {3};
    \node[fill=none,scale=.9] (4) at (.39,2.2) {4};
    \node[fill=none,scale=.9] (5) at (1,2.9) {5};
    \node[fill=none,scale=.9] (6) at (.33,3.5) {6};
    \node[fill=none,scale=.9] (7) at (1.4,3.4) {7};
    \node[fill=none,scale=.9] (8) at (.33,4.4) {8};
    \draw[thick,directed] (a) to (b);
    \draw[thick,directed] (b) to (c);
    \draw[thick,directed,bend left=40] (b) to (d);
    \draw[thick,directed,bend left=60] (b) to (e);
    \draw[thick,directed,bend left=40] (c) to (d);
    \draw[thick,directed,bend right=40] (c) to (d);
    \draw[thick,directed,bend right=60] (c) to (e);
    \draw[thick,directed] (d) to (e);
    \draw[thick,directed] (e) to (f);
\end{tikzpicture}

\vspace{.1in}

\raisebox{10pt}{$\Phi$} \kern-4pt \scalebox{3}{$\Downarrow$}

\vspace{.15in}

\setlength{\tabcolsep}{2.5pt}
\scalebox{.9}{
\begin{tabular}{|>{$}c<{$}|>{$}c<{$}|}
\hline
1 & 4 \\ \hline
2 \kern+3pt 3 & 5 \kern+3pt 7 \\ \hline
6 & 8 \\ \hline
\end{tabular}}
\end{subfigure}
\caption{An example of the relationship between rotation $r$ of $k$-ary product-coproduct prographs and the generalized Sch\"utzenberger involution $f$ on standard set-valued Young tableaux.}
\label{fig: schutzenberger versus rotation}
\end{figure}

As Proposition \ref{thm: rotated depth-left first search} applies to all $x$-fold $k$-ary prographs, the result of Theorem \ref{thm: schutzenberger} may be directly extended to non-closed prographs if one defines a suitable generalization of the Sch\"utzenberger involution.  If $\lambda = (n,n,n-a)$ and $\mu = (b,0,0)$, let $\lambda'=(n,n,n-b)$ and $\mu'=(a,0,0)$.  Then there exists a map $F:\svt(\lambda/\mu,\rho) \rightarrow \svt(\lambda',\mu',\rho)$ that is defined via $180$-degree rotation of $T \in \svt(\lambda/\mu,\rho)$ and a reversal $i \mapsto 3n-a-b+1-i$ of entries in the resulting tableau.  This map clearly specializes to the Sch\"utzenberger involution when $a=b=0$, and a superficial modification of the technique from Theorem \ref{thm: schutzenberger} yields $\psi \circ r = F \circ \psi$.  Notice how $F$ flips the number of ``missing boxes" in the top and bottom rows of a skew set-valued tableau, similarly to how $r$ flips the number of ``missing" products and coproducts needed to justify the associated $x$-fold prograph.

\section{Future Directions}
\label{sec: future directions}

\subsection{Non-Closed $k$-ary Prographs $\pc_x^k(n,m)$ for which $x \not\equiv 1 \mod(k-1)$}
\label{subsec: x not 1 mod(k-1)}

Subsection \ref{subsec: non-closed prographs} entirely restricted its attention to sets $\pc_x^k(n,m)$ of $x$-fold $k$-ary prographs for which $x \equiv 1 \mod(k-1)$.  Developing an analogue to Theorem \ref{thm: prograph vs tableaux bijection, non-rectangular} in the case of $x \not\equiv 1 \mod(k-1)$ is significantly more involved, as such prographs require a modification of the justification operator whose effect on the associated set-valued tableaux is more difficult to interpret.  Although we stop short of proving an explicit bijection, we pause to outline how the techniques of Subsection \ref{subsec: non-closed prographs} may be generalized to the case of general $\pc_x^k(n,m)$.

So let $x \equiv a \mod(k-1)$, where $2 \leq a \leq k-1$, and consider the set $\pc_x^k(n,m)$.  There exists an injection $J : \pc_x^k(n,m) \rightarrow \pc^k(n+ \frac{x+k-a-1}{k-1})$ in which $k-a$ free strands are added on the left side of $G \in \pc_x^k(n,m)$, producing a prograph $\widetilde{G} \in \pc_{x+k-a}^k(n,m)$ in which the number of inputs is $1 \kern-3pt\mod(k-1)$, and then the original justification operator $j$ is applied to $\widetilde{G}$.  For $G \in \pc_x^k(n,m)$, we call the image $J(G) \in \pc^k(n + \frac{x+k-a-1}{k-1})$ the \textbf{left-weighted justification} of $G$.  See Figure \ref{fig: justified prographs, non-rectangular} for an example of left-weighted justification.

\begin{figure}[ht!]
\centering
\begin{tikzpicture}
[scale=.47,auto=left,every node/.style={circle,fill=black,inner sep=1.0pt}]
    \node[draw] (a1) at (-.4,0) {};
    \node[draw] (a2) at (.4,0) {};
    \node[fill=none] (b1) at (-.4,2.6) {};
    \node[fill=none] (b2) at (.4,2.6) {};
	\node[fill=none] (c0) at (-1.4,3.9) {};
    \node[fill=none] (c1) at (-.7,3.9) {};
    \node[fill=none] (c2) at (0,3.9) {};
    \node[fill=none] (c3) at (.7,3.9) {};
    \node[fill=none] (c4) at (1.4,3.9) {};
   	\node[draw] (d0) at (-1.4,7.1) {};  
    \node[draw] (d1) at (-.7,7.1) {};
    \node[draw] (d2) at (0,7.1) {};
    \node[draw] (d3) at (0.7,7.1) {};
    \node[draw] (d4) at (1.4,7.1) {};
	\draw[thick] (-1.7,2.5) rectangle (1.7,4);
    \draw[thick,directed] (a1) to (b1) {};
    \draw[thick,directed] (a2) to (b2) {};
    \draw[thick,directed] (c0) to (d0) {};    
    \draw[thick,directed] (c1) to (d1) {};
    \draw[thick,directed] (c2) to (d2) {};
    \draw[thick,directed] (c3) to (d3) {};
    \draw[thick,directed] (c4) to (d4) {};
    \node[fill=none] at (0,3.25) {$G$};
\end{tikzpicture}
\hspace{.1in}
\raisebox{38pt}{\scalebox{3}{$\Rightarrow$}}
\hspace{.1in}
\begin{tikzpicture}
[scale=.47,auto=left,every node/.style={circle,fill=black,inner sep=1.0pt}]
    \node[draw] (a1) at (-.4,0) {};
    \node[draw] (a2) at (.4,0) {};
    \node[fill=none] (b1) at (-.4,2.6) {};
    \node[fill=none] (b2) at (.4,2.6) {};
	\node[fill=none] (c0) at (-1.4,3.9) {};
    \node[fill=none] (c1) at (-.7,3.9) {};
    \node[fill=none] (c2) at (0,3.9) {};
    \node[fill=none] (c3) at (.7,3.9) {};
    \node[fill=none] (c4) at (1.4,3.9) {};
   	\node[draw] (d0) at (-1.4,7.1) {};  
    \node[draw] (d1) at (-.7,7.1) {};
    \node[draw] (d2) at (0,7.1) {};
    \node[draw] (d3) at (0.7,7.1) {};
    \node[draw] (d4) at (1.4,7.1) {};
    \node[draw] (bottom1) at (-3.1,0) {};
    \node[draw] (bottom2) at (-2.4,0) {};
    \node[draw] (top1) at (-3.1,7.1) {};
    \node[draw] (top2) at (-2.4,7.1) {};
	\draw[thick] (-1.7,2.5) rectangle (1.7,4);
    \draw[thick,directed] (a1) to (b1) {};
    \draw[thick,directed] (a2) to (b2) {};
    \draw[thick,directed] (c0) to (d0) {};    
    \draw[thick,directed] (c1) to (d1) {};
    \draw[thick,directed] (c2) to (d2) {};
    \draw[thick,directed] (c3) to (d3) {};
    \draw[thick,directed] (c4) to (d4) {};
    \draw[thick,directed] (bottom1) to (top1) {};
    \draw[thick,directed] (bottom2) to (top2) {};
    \node[fill=none] at (0,3.25) {$G$};
\end{tikzpicture}
\hspace{.1in} \raisebox{38pt}{\scalebox{3}{$\Rightarrow$} \kern-25pt{\raisebox{18pt}{$j$}}} \hspace{.25in}
\begin{tikzpicture}
[scale=.47,auto=left,every node/.style={circle,fill=black,inner sep=1.0pt}]
    \node[draw] (bottom0) at (-1.9,0) {};
    \node[draw] (bottom1) at (-1.9,1) {};
    \node[draw] (top2) at (0.35,5.2) {};
    \node[draw] (top1) at (-1.9,6.1) {};
    \node[draw] (top0) at (-1.9,7.1) {};
    \node[fill=none] (b1) at (-.4,2.6) {};
    \node[fill=none] (b2) at (.4,2.6) {};
	\node[fill=none] (c0) at (-1.4,3.9) {};
    \node[fill=none] (c1) at (-.7,3.9) {};
    \node[fill=none] (c2) at (0,3.9) {};
    \node[fill=none] (c3) at (.7,3.9) {};
    \node[fill=none] (c4) at (1.4,3.9) {};
   	\node[fill=none] (d0) at (-1.4,7.1) {};  
    \node[fill=none] (d1) at (-.7,7.1) {};
    \node[fill=none] (d2) at (0,7.1) {};
    \node[fill=none] (d3) at (0.7,7.1) {};
    \node[fill=none] (d4) at (1.4,7.1) {};
	\draw[thick] (-1.7,2.5) rectangle (1.7,4);
    \draw[thick,directed] (bottom0) to (bottom1) {};
    \draw[thick,directed,bend right=15] (bottom1) to (b1) {};
    \draw[thick,directed,bend right=30] (bottom1) to (b2) {};
    \draw[thick,directed,bend left=20] (c1) to (top2) {};
    \draw[thick,directed,bend left=10] (c2) to (top2) {};
    \draw[thick,directed,bend right=10] (c3) to (top2) {};
    \draw[thick,directed,bend right=20] (c4) to (top2) {};
	\draw[thick,directed,bend right=10] (c0) to (top1) {};
	\draw[thick,directed,bend right=5] (top2) to (top1) {};
    \draw[thick,directed,bend left=10] (bottom1) to (top1) {};
    \draw[thick,directed,bend left=40] (bottom1) to (top1) {};
    \draw[thick,directed] (top1) to (top0) {};
    \node[fill=none] at (0,3.25) {$G$};
\end{tikzpicture}
\caption{A non-closed prograph $G \in \pc_2^4(n,n-1)$ and its left-weighted justification $J(G) \in \pc^4(n + \frac{2+4-2-1}{4-1})$.}
\label{fig: justified prographs, non-rectangular}
\end{figure}
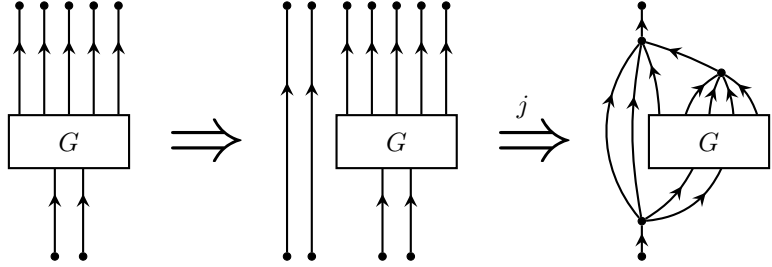

Following the techniques of Theorem \ref{thm: prograph vs tableaux bijection, non-rectangular}, left-weighted justification suggests that $\pc_x^k(n,m)$ may be placed in bijection with some subset of $\svt(\lambda/\mu,\rho)$ for $\lambda=(n+\frac{x+k-a-1}{k-1},n+\frac{x+k-a-1}{k-1},m)$ and $\mu=(\frac{x+k-a-1}{k-1},0,0)$.  The difficulty is in describing what subset of $\svt(\lambda/\mu,\rho)$ corresponds to left-justified prographs in which the $k-a$ leftmost children of the initial coproduct terminate at the final product node.

Conjecture \ref{thm: x not 1 mod(k-1) conjecture} describes the subset of $\svt(\lambda/\mu,\rho)$ that should lie in bijection with $\pc_x^k(n,m)$.  The first condition below prevents the $k-a$ leftmost children of the initial coproduct from terminating at a coproduct node.  The second condition prevents those same edges from serving as an input to a product that isn't the final product of the prograph.

\begin{conjecture}
\label{thm: x not 1 mod(k-1) conjecture}
Assume $x \equiv a \mod(k-1)$, where $2 \leq a \leq k-1$.  Then consider $\pc_x^k(n,m)$ and $\svt(\lambda/\mu,\rho)$ with $\lambda=(n+\frac{x+k-a-1}{k-1},n+\frac{x+k-a-1}{k-1},m)$ and $\mu=(\frac{x+k-a-1}{k-1},0,0)$.  For arbitrary $T \in \svt(\lambda/\mu,\rho)$, let $b_1 < b_2 < \hdots$ denote the middle-row entries of $T$ and let $c_1 < c_2 < \hdots$ denote the bottom-row entries of $T$.  Then $\pc_x^k(n,m)$ is in bijection with the subset of tableaux from $\svt(\lambda/\mu,\rho)$ satisfying
\begin{enumerate}
\item $b_i = i$ for all $1 \leq i \leq k-a$, and
\item $c_i > b_{(k-1)i + 2 - (k-a)}$ for all $1 \leq i \leq m-1$
\end{enumerate}
\end{conjecture}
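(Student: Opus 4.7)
The plan is to extend the proof of Theorem~\ref{thm: prograph vs tableaux bijection, non-rectangular} by substituting the left-weighted justification $J$ for the ordinary justification $j$. Concretely, I would define a candidate map $\widetilde{\psi} = \widetilde{\chi} \circ \Phi \circ J : \pc_x^k(n, m) \to \svt(\lambda/\mu, \rho)$, where $\widetilde{\chi}$ deletes the first $c = \frac{x+k-a-1}{k-1}$ top-row cells and the last $c' = \frac{y+k-a-1}{k-1}$ bottom-row cells of $\Phi(J(G)) \in \svt(n'^3, \rho)$ (with $n' = n + c$) and reindexes so no positive integers are skipped. The overall strategy is to show that $\widetilde{\psi}$ is a well-defined injection whose image is precisely the tableaux of $\svt(\lambda/\mu, \rho)$ satisfying conditions (1) and (2).

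First I would track the depth-left first search on $J(G)$ in the spirit of Corollary~\ref{thm: prograph vs tableaux bijection, non-rectangular corollary}. Labels $1, \ldots, c$ necessarily populate the leftmost outputs of the $c$ justifying coproducts chained at the bottom of $J(G)$, and label $c$ is the leftmost output of the topmost justifying coproduct, which equals the leftmost auxiliary strand. The search then forces labels $c+1, \ldots, c+(k-a)$ to be applied consecutively to the next $k-a$ outputs (numbered $2$ through $k-a+1$) of that same top justifying coproduct, since each of the intervening auxiliary edges proceeds straight to the top chain product without activating any node of $G$; among these, the first $k-a-1$ are the remaining auxiliary strands and the $(k-a)$-th is the leftmost original input of $G$. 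All $k-a$ are non-leftmost coproduct children, so after $\widetilde{\chi}$'s reindexing, $b_i = i$ for $1 \leq i \leq k-a$, which is condition (1). For condition (2), I would recycle the column-standardness count $\alpha_2 \geq (k-1) j$ from the proof of Theorem~\ref{thm: prograph vs tableaux bijection, rectangular}. The first $m$ products labelled by the depth-left first search on $J(G)$ are precisely the original products of $G$, and the $k - a$ auxiliary strands inflate $\alpha_2$ by $k - a$ without ever enabling a product of $G$; substituting this offset into the Theorem~\ref{thm: prograph vs tableaux bijection, rectangular} bound and applying $\widetilde{\chi}$'s reindexing shift of $-c$ yields exactly $c_i > b_{(k-1)i + 2 - (k-a)}$.

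For the reverse direction, given any $T$ satisfying conditions (1) and (2), I would reverse $\widetilde{\chi}$ by shifting all entries of $T$ up by $c$, prepending $1, 2, \ldots, c$ into the first $c$ top cells, and appending $kn' + m + 1, \ldots, (k+1)n'$ into the last $c'$ bottom cells; verifying row- and column-standardness of the resulting closed tableau $T' \in \svt(n'^3, \rho)$ uses conditions (1) and (2) to check that the prepended head and appended tail fit around the existing entries of $T$. Applying $\Phi^{-1}$ produces a closed prograph $G' \in \pc^k(n')$, and the decisive claim is that $G'$ lies in the image of $J$: its $c$ lowest coproducts form a justifying chain, its $c'$ highest products form another chain, and the $k - a$ leftmost outputs of the topmost chain coproduct proceed directly to the topmost chain product. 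Once this structural claim is in hand, an unjustification operator analogous to $h$ from the proof of Theorem~\ref{thm: prograph vs tableaux bijection, non-rectangular} recovers a prograph in $\pc_x^k(n, m)$, and bijectivity follows from that of $\Phi$.

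The main obstacle will be that final structural claim. Conditions (1) and (2) are local statements about tableau entries, while the conclusion demands a global skeleton inside $G' = \Phi^{-1}(T')$. I expect to argue inductively on the depth-left first search labels of $G'$: condition (1) should prevent any intermediate coproduct or product from intercepting one of the $k - a$ auxiliary outputs of the topmost justifying coproduct, since any such interception would introduce an extra leftmost coproduct child or product output among the first $c + (k - a)$ labels and contradict the reinserted top-row head $1, \ldots, c$; condition (2) should play the analogous role for the top chain via its product-firing bound. Transforming conditions (1) and (2) into structural assertions about $\Phi^{-1}(T')$, rather than merely numerical ones, is where the proof is most delicate.
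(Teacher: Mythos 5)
This statement is an open conjecture in the paper --- the authors explicitly ``stop short of proving an explicit bijection'' --- so there is no proof of record to match; the question is whether your plan would actually close the gap, and it would not. Your tracking of the depth-left first search on $J(G)$ is fine (labels $c+1,\hdots,c+k-a$ do land on the non-leftmost children of the topmost justifying coproduct, giving condition (1) for image tableaux), but the two load-bearing steps fail. First, you assert without computation that adjusting the $\alpha_2 \geq (k-1)j$ count by the $k-a$ auxiliary strands ``yields exactly $c_i > b_{(k-1)i+2-(k-a)}$.'' Carrying the count out, the auxiliary strands are labelled early and remain free but are unavailable as inputs to any product of $G$ (they are reserved for the final added product, which cannot have fired yet), so one needs $\alpha_2 - (k-1)(i-1) + 1 \geq k + (k-a)$, i.e.\ $c_i > b_{(k-1)i+(k-a)}$, and this is forced for \emph{all} $1 \leq i \leq m$, since after justification the $m^{th}$ product of $G$ is no longer the final product. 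This agrees with the conjectured condition only when $a = k-1$; for $k-a \geq 2$ the image satisfies a strictly stronger condition than the one stated. Second, the entire sufficiency direction --- that conditions (1)--(2) on $T$ force $\Phi^{-1}(T')$ to carry the left-weighted-justified skeleton --- is precisely the difficulty the paper identifies, and you defer it (``I expect to argue inductively\dots''), so nothing is actually proved there.

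Worse, sufficiency cannot hold for the conditions as literally stated, so your reverse-direction plan is doomed without first amending the conjecture. Take $k=4$, $x=2$ (so $a=2$, $k-a=2$), $n=m=1$. Then $\lambda/\mu = (2,2,1)/(1,0,0)$, $\rho=(1,3,1)$, and condition (2) is vacuous because $m-1=0$. A direct check shows $\vert \pc_2^4(1,1) \vert = 4$ (the coproduct sits on one of the two inputs, and the product then takes one of the two admissible quadruples of adjacent strands), whereas nine tableaux of $\svt(\lambda/\mu,\rho)$ satisfy condition (1): for instance the tableau with top entry $3$, middle cells $\lbrace 1,2,4\rbrace$ and $\lbrace 6,7,8\rbrace$, and bottom entry $5$ is standard and has $b_1=1$, $b_2=2$, yet it is not the image of any prograph --- indeed it violates $c_1 > b_{(k-1)\cdot 1 + (k-a)} = b_5$, the inequality your own counting should have produced, and by cardinality no bijection with the stated subset can exist. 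The four image tableaux are exactly the ones satisfying condition (1) together with $c_i > b_{(k-1)i+(k-a)}$ for $1 \leq i \leq m$. So a correct execution of your outline must first repair condition (2) (index $(k-1)i+(k-a)$ and range $1 \leq i \leq m$), and then still supply the structural induction you postponed, which remains the real content of the problem.
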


\subsection{Additional Combinatorial Interpretations for $\svt(n^3,\rho)$ with $\rho = (1,k-1,1)$}
\label{subsec: additional interpretations}

It is natural to suppose that all combinatorial interpretations of the three-dimensional Catalan numbers admit one-parameter generalizations that lie in bijection with $\svt(n^3,\rho)$ for $\rho = (1,k-1,1)$.  Below we briefly conjecture as to how several more of those interpretations may be $k$-generalized.  See sequence A005789 of OEIS \cite{OEIS} for a full list of candidates.  Beyond the interpretations discussed below, we are especially interested in how the pattern-avoiding permutations of Lewis \cite{Lewis} may be generalized using standard set-valued Young tableaux.

\begin{enumerate}
\item The three-dimensional Catalan number $C_{3,n}$ is known to count the number of walks in the first quadrant of $\Z^2$ that start and end at $(0,0)$ and use $3n$ total steps from $\lbrace (0,1),(1,-1), (-1,0) \rbrace$.  These walks are known to lie in bijection with $S(n^3)$ via a map that associates $(0,1)$ steps with entries in the top row of the corresponding tableau, $(1,-1)$ steps with entries in the middle row of that tableau, and $(-1,0)$ steps with entries in the bottom row of that tableau.  We conjecture that this map may be generalized to a bijection between $\svt(n^3,\rho)$ with $\rho=(1,k-1,1)$ and walks in the first quadrant of $\Z^2$ that start and end at $(0,0)$ and which use $(k+1)n$ total steps from $\lbrace (0,k-1), (1,-1), (-k+1,0) \rbrace$.  In this bijection, $(0,k-1)$ steps should correspond to entries in the top row of the associated set-valued tableau, $(1,-1)$ should correspond to entries in the middle row of that tableau, and $(-k+1,0)$ entries should correspond to entries in the bottom row of that tableau.
\item $C_{3,n}$ is also known to count three-dimensional integer lattice paths from $(0,0,0)$ to $(n,n,n)$ that use steps from $\lbrace (1,0,0), (0,1,0), (0,0,1) \rbrace$ and that satisfy $x \geq y \geq z$ at every lattice point $(x,y,z)$ along the path.  These lattice paths are known to lie in bijection with $S(n^3)$ via a map that associates $(1,0,0)$ steps with entries in the top row of the corresponding tableau, $(0,1,0)$ steps with entries in the middle row of that tableau, and $(0,0,1)$ steps with entries in the bottom row of that tableau.  It should be straightforward to generalize this map to a bijection between $\svt(n^3,\rho)$ with $\rho(1,k-1,1)$ and integer lattice paths from $(0,0,0)$ to $((k-1)n,n,(k-1)n)$ that use steps from $\lbrace (1,0,0), (0,1,0), (1,0,0) \rbrace$ and which satisfy $(k-1)x \geq y \geq (k-1)z$ at every point $(x,y,z)$.  This bijection would similarly associate $(1,0,0)$ steps to top-row entries, $(0,1,0)$ to middle-row entries, and $(0,0,1)$ to bottom-row entries.
\end{enumerate}

For a somewhat different application of set-valued tableaux with $\rho = (1,k-1,1)$, we refer the reader to the work of Eu \cite{Eu}.  Eu places all standard Young tableaux with at most three rows and any shape $\lambda \vdash N$ in bijection with Motzkin paths of length $n$.  By Motzkin paths of length $n$ we mean integer lattice paths from $(0,0)$ to $(n,0)$ that use steps from $\lbrace (1,1),(1,-1),(1,0) \rbrace$ and never fall below the $x$-axis.

Direct computations for small $n$ reveal that a similar result may hold for standard set-valued Young tableaux with at most three rows, precisely $n(k-1)$ entries, and densities (determined by the number of rows) of either $\rho_1 = (1)$, $\rho_2 = (1,k-1)$, or $\rho = (1,k-1,1)$.  In particular, such tableaux appear to lie in bijection with what we refer to as $(k-1)$-sloped Motzkin paths of length $n$: lattice paths from $(0,0)$ to $(n,0)$ that use steps from $\lbrace (k-1,1),(1,-1),(1,0) \rbrace$ and which never fall below the $x$-axis.  The only caveat here is that one cannot include tableaux with ``partially filled" cells: every cell must have the full complement of entries determined by $\rho_i$.\footnote{$k$-sloped Motzkin paths should not be confused with the pre-existing notion of $k$-Motzkin paths, which correspond to $2$-sloped Motzkin paths in which every horizontal steps carries one of $k$ colors.  See Barrucci, Del Lungo, Pergola and Pinazni \cite{BDPP} for a treatment of $k$-Motzkin paths}

See Figure \ref{fig: motzkin paths} for a comparison of $3$-sloped Motzkin paths of length $n=4$ and set-valued tableaux with density from $\lbrace (1), (1,2), (1,2,1) \rbrace$ and precisely $4$ entries.  For justification of the specific matching exhibited in Figure \ref{fig: motzkin paths}, we direct the reader to the algorithm presented by Eu \cite{Eu}.

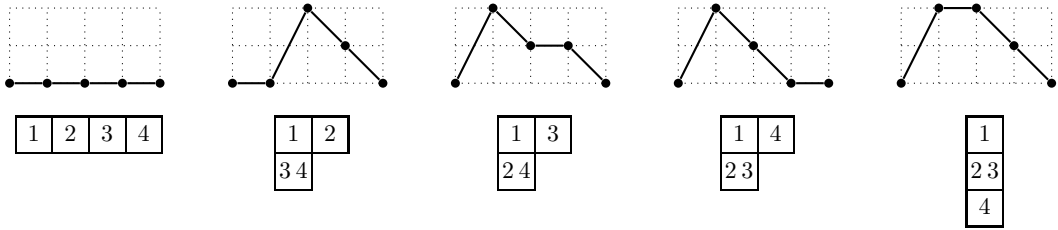
\begin{figure}[ht!]
\centering
\ytableausetup{boxsize=1.5em}

\setlength{\tabcolsep}{12pt}
\begin{tabular}{c c c c c}
\begin{tikzpicture}
	[scale=.5,auto=left,every node/.style={circle, fill=black,inner sep=1.2pt}]
	\draw[dotted] (0,0) to (4,0);
	\draw[dotted] (0,1) to (4,1);
	\draw[dotted] (0,2) to (4,2);
	\draw[dotted] (0,0) to (0,2);
	\draw[dotted] (1,0) to (1,2);
	\draw[dotted] (2,0) to (2,2);
	\draw[dotted] (3,0) to (3,2);
	\draw[dotted] (4,0) to (4,2);
	\node (0*) at (0,0) {};
	\node (1*) at (1,0) {};
	\node (2*) at (2,0) {};
	\node (3*) at (3,0) {};
	\node (4*) at (4,0) {};
	\draw[thick] (0*) to (1*);
	\draw[thick] (1*) to (2*);
	\draw[thick] (2*) to (3*);
	\draw[thick] (3*) to (4*);
\end{tikzpicture}
&
\begin{tikzpicture}
	[scale=.5,auto=left,every node/.style={circle, fill=black,inner sep=1.2pt}]
	\draw[dotted] (0,0) to (4,0);
	\draw[dotted] (0,1) to (4,1);
	\draw[dotted] (0,2) to (4,2);
	\draw[dotted] (0,0) to (0,2);
	\draw[dotted] (1,0) to (1,2);
	\draw[dotted] (2,0) to (2,2);
	\draw[dotted] (3,0) to (3,2);
	\draw[dotted] (4,0) to (4,2);
	\node (0*) at (0,0) {};
	\node (1*) at (1,0) {};
	\node (2*) at (2,2) {};
	\node (3*) at (3,1) {};
	\node (4*) at (4,0) {};
	\draw[thick] (0*) to (1*);
	\draw[thick] (1*) to (2*);
	\draw[thick] (2*) to (3*);
	\draw[thick] (3*) to (4*);
\end{tikzpicture}
&
\begin{tikzpicture}
	[scale=.5,auto=left,every node/.style={circle, fill=black,inner sep=1.2pt}]
	\draw[dotted] (0,0) to (4,0);
	\draw[dotted] (0,1) to (4,1);
	\draw[dotted] (0,2) to (4,2);
	\draw[dotted] (0,0) to (0,2);
	\draw[dotted] (1,0) to (1,2);
	\draw[dotted] (2,0) to (2,2);
	\draw[dotted] (3,0) to (3,2);
	\draw[dotted] (4,0) to (4,2);
	\node (0*) at (0,0) {};
	\node (1*) at (1,2) {};
	\node (2*) at (2,1) {};
	\node (3*) at (3,1) {};
	\node (4*) at (4,0) {};
	\draw[thick] (0*) to (1*);
	\draw[thick] (1*) to (2*);
	\draw[thick] (2*) to (3*);
	\draw[thick] (3*) to (4*);
\end{tikzpicture}
&
\begin{tikzpicture}
	[scale=.5,auto=left,every node/.style={circle, fill=black,inner sep=1.2pt}]
	\draw[dotted] (0,0) to (4,0);
	\draw[dotted] (0,1) to (4,1);
	\draw[dotted] (0,2) to (4,2);
	\draw[dotted] (0,0) to (0,2);
	\draw[dotted] (1,0) to (1,2);
	\draw[dotted] (2,0) to (2,2);
	\draw[dotted] (3,0) to (3,2);
	\draw[dotted] (4,0) to (4,2);
	\node (0*) at (0,0) {};
	\node (1*) at (1,2) {};
	\node (2*) at (2,1) {};
	\node (3*) at (3,0) {};
	\node (4*) at (4,0) {};
	\draw[thick] (0*) to (1*);
	\draw[thick] (1*) to (2*);
	\draw[thick] (2*) to (3*);
	\draw[thick] (3*) to (4*);
\end{tikzpicture}
&
\begin{tikzpicture}
	[scale=.5,auto=left,every node/.style={circle, fill=black,inner sep=1.2pt}]
	\draw[dotted] (0,0) to (4,0);
	\draw[dotted] (0,1) to (4,1);
	\draw[dotted] (0,2) to (4,2);
	\draw[dotted] (0,0) to (0,2);
	\draw[dotted] (1,0) to (1,2);
	\draw[dotted] (2,0) to (2,2);
	\draw[dotted] (3,0) to (3,2);
	\draw[dotted] (4,0) to (4,2);
	\node (0*) at (0,0) {};
	\node (1*) at (1,2) {};
	\node (2*) at (2,2) {};
	\node (3*) at (3,1) {};
	\node (4*) at (4,0) {};
	\draw[thick] (0*) to (1*);
	\draw[thick] (1*) to (2*);
	\draw[thick] (2*) to (3*);
	\draw[thick] (3*) to (4*);
\end{tikzpicture}

\vspace{.1in}
\\
\scalebox{.9}{
\begin{ytableau}
1 & 2 & 3 & 4
\end{ytableau}}
&
\scalebox{.9}{
\begin{ytableau}
1 & 2 \\
3 \kern+2pt 4 
\end{ytableau}}
&
\scalebox{.9}{
\begin{ytableau}
1 & 3 \\
2 \kern+2pt 4 
\end{ytableau}}
&
\scalebox{.9}{
\begin{ytableau}
1 & 4 \\
2 \kern+2pt 3 
\end{ytableau}}
&
\scalebox{.9}{
\begin{ytableau}
1 \\
2 \kern+2pt 3 \\
4 
\end{ytableau}}
\end{tabular}

\caption{$3$-sloped Motzkin paths of length $4$ and standard set-valued Young tableaux with $4$ entries across at most three-rows and densities of either $\rho_1 = (1)$, $\rho_2 = (1,2)$, or $\rho_3 = (1,2,1)$.}
\label{fig: motzkin paths}
\end{figure}

\subsection{$\svt(\lambda,\rho)$ for Distinct Three- and Four-Row Densities}
\label{subsec: other sets}

We close this paper by briefly exploring several additional densities for standard set-valued Young tableaux of shapes $\lambda = n^3$ and $\lambda = n^4$.  The cardinalities of the resulting sets $\svt(\lambda,\rho)$ correspond to one-parameter generalizations of the three- and four-dimensional Catalan numbers that are distinct from the three-dimensional $k$-Catalan numbers $C_{3,n}^k$ of previous sections.  It is our hope that combinatorial interpretations as interesting as those for $C_{3,n}^k$ will eventually be found for each of these generalizations.

First consider the case of $\lambda = n^3$ and $\widetilde{\rho} = (k-1,1,1)$, where $k \geq 1$.  We informally refer to the resulting integers $\widetilde{C}_{3,n}^k = \vert \svt(n^3,\widetilde{\rho}) \vert$ as the non-involutory three-dimensional $k$-Catalan numbers.  This title is motivated by the fact that the set-valued Sch\"utzenberger involution is no longer an automorphism of $\svt(n^3,\widetilde{\rho})$ but a bijection onto the distinct set $\svt(n^3,\widetilde{\rho}')$ with $\widetilde{\rho}'=(1,1,k-1)$.  Observe from Tables \ref{tab: 1,k-1,1} and \ref{tab: k-1,1,1} of Appendix \ref{sec: appendix} that $\widetilde{C}_{3,n}^k \leq C_{3,n}^k$ for all choices of $n,k$ where both values are known.

Applying the methods of Section \ref{sec: enumeration} to $\svt(\lambda,\widetilde{\rho})$ yields the closed formulas of Proposition \ref{thm: k-1,1,1 values} and the general recurrences of Proposition \ref{thm: k-1,1,1 recurrences}.  See Table \ref{tab: k-1,1,1} of Appendix \ref{sec: appendix} for all known values of $\widetilde{C}_{3,n}^k = \vert \svt(n^3,\widetilde{\rho}) \vert$.

Pause to note that the recurrences of Proposition \ref{thm: k-1,1,1 recurrences} are significantly harder to apply than those for $\rho = (1,k-1,1)$ that appear in Proposition \ref{thm: general n recurrences}, as the recurrences of Proposition \ref{thm: k-1,1,1 recurrences} involve enumerations of (non-set-valued) standard skew Young tableaux.  This is a difficulty that appears to extend to all three- (and four-) row densities other than $\rho = (1,k-1,1)$.

\begin{proposition}
\label{thm: k-1,1,1 values}
Let $\widetilde{\rho} = (k-1,1,1)$.  For any $k \geq 1$,

$$\widetilde{C}^k_{3,2} = \vert \svt(2^3,\widetilde{\rho}) \vert = \frac{1}{2}k^2 + \frac{3}{2}k$$

$$\widetilde{C}^k_{3,3} = \vert \svt(3^3,\widetilde{\rho}) \vert = \frac{2}{3}k^4 + 3 k^3 + \frac{7}{3}k^2 - k$$

$$\widetilde{C}^k_{3,4} = \vert \svt(4^3,\widetilde{\rho}) \vert = \frac{25}{18}k^6 + \frac{61}{8}k^5 + \frac{175}{18}k^4 - \frac{35}{24}k^3 - \frac{37}{9}k^2 + \frac{5}{6}k$$
\end{proposition}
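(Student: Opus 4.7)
The plan is to mirror the recursive corner-deletion strategy used in the proofs of Propositions \ref{thm: n=2} and \ref{thm: n=3}. Starting from $\lambda = n^3$, the largest entry of any $T \in \svt(n^3,\widetilde{\rho})$ must lie in the unique corner at $(3,n)$, giving the immediate reduction $|\svt(n^3,\widetilde{\rho})| = |\svt((n,n,n-1),\widetilde{\rho})|$. From there I would recursively identify the corner cells of the current shape, split into subsets based on which corner holds the largest remaining entry, and in each case delete that cell to reduce to a strictly smaller shape, accounting for any lost ordering constraints via multinomial coefficients.

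For $n=2$ the recursion terminates almost immediately. After the opening reduction to shape $(2,2,1)$, the two corners $(2,2)$ and $(3,1)$ each hold a single entry, and splitting on which contains the largest remaining entry $2k+1$ yields
\[
|\svt((2,2,1),\widetilde{\rho})| \;=\; |\svt((2,1,1),\widetilde{\rho})| \;+\; |\svt((2,2),(k-1,1))|.
\]
The hook-like shape $(2,1,1)$ is enumerated directly: the constraint that every entry of $(1,1)$ must be smaller than every entry of both $(1,2)$ and $(2,1)$ forces $(1,1) = \{1,\ldots,k-1\}$, leaving $k+1$ entries from which one selects an ordered pair for $(2,1)<(3,1)$, contributing $\binom{k+1}{2}$. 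The other summand equals $C_2^k = k$ by Heubach, Li, and Mansour \cite{HLM}. Summing gives $\tfrac{1}{2}k^2 + \tfrac{3}{2}k$, as claimed.

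For $n=3$ and $n=4$ the same framework applies but the recursion branches substantially and eventually forces the removal of top-row corners containing $k-1$ entries. In those steps the removed entries must be interleaved with entries elsewhere in the tableau, generating multinomial weights $\binom{k-2+i+j}{k-2,i,j}$ analogous to those in Proposition \ref{thm: general n recurrences}. Most of the terminal sub-shapes can then be evaluated using cardinalities already in hand (hook shapes, rectangular two-row shapes via $C_n^k$, and the $n=2$ case itself); a few further sub-shapes reduce to counts of non-set-valued skew standard Young tableaux, which I would compute via the Frame--Robinson--Thrall hook-length formula \cite{FRT}. Collecting the resulting contributions and simplifying as polynomials in $k$ yields the stated closed forms.

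The principal obstacle is bookkeeping, in line with the remark preceding the proposition: unlike density $(1,k-1,1)$, where the multi-entry middle row is sandwiched and its interactions with neighbors are local, here the multi-entry row sits on top and its $k-1$ entries can interleave with many cells below, proliferating cases. For this reason I expect the $n=4$ simplification (and to a lesser extent $n=3$) to be best carried out with a computer algebra system, after which the polynomials should be cross-checked against Table \ref{tab: k-1,1,1} at several small values of $k$.
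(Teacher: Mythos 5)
Your plan is essentially the paper's own route: the paper obtains these three polynomials by the same recursive cell-removal calculus of Section \ref{sec: enumeration}, specialized to $\widetilde{\rho}=(k-1,1,1)$ and packaged as the recurrences of Proposition \ref{thm: k-1,1,1 recurrences}, which are then applied (with computer assistance) for $n=3,4$; your $n=2$ computation is correct and agrees with it, since $\binom{k+1}{2}+C_2^k=\tfrac12 k^2+\tfrac32 k$. Two points in your sketch need sharpening before the $n=3,4$ cases go through. First, when a top-row corner cell (carrying $k-1$ entries) is deleted from a shape $(a,b,c)$ with $a>b$, the weight is not a multinomial of the type in Proposition \ref{thm: general n recurrences}: if $i$ (resp.\ $j$) middle-row (resp.\ bottom-row) cells hold entries below the minimum of the deleted cell, the contribution is $\binom{(b-i)+(c-j)+k-2}{k-2}\,\vert S((b,c)/(i,j))\vert\,\vert\svt((a-1,i,j),\widetilde{\rho})\vert$ --- a binomial interleaving factor times a skew standard Young tableau count --- because the leftover middle- and bottom-row entries are constrained against each other column-by-column and therefore do not interleave freely with one another; this is exactly the first case of Proposition \ref{thm: k-1,1,1 recurrences}, and replacing it by a free-interleaving multinomial would overcount for $n\geq 3$. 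Second, the Frame--Robinson--Thrall hook-length formula applies only to straight shapes, so the factors $\vert S((b,c)/(i,j))\vert$ must instead come from the determinantal (Aitken) formula or direct enumeration, which is routine for the small skew shapes arising when $n\leq 4$. With those corrections your procedure coincides with the paper's and, carried out by hand or with a computer algebra system and checked against Table \ref{tab: k-1,1,1}, yields the three stated closed forms.
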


\begin{proposition}
\label{thm: k-1,1,1 recurrences}
Fix $k \geq 1$.  For $\widetilde{\rho} = (k-1,1,1)$ and any three-row shape $\lambda = (a,b,c)$ with $a \leq b \leq c$,

$$|\svt((a,b,c),\widetilde{\rho}) | =
\begin{cases}
\displaystyle{\sum_{\substack{0 \leq j \leq i \leq b,\\[1pt]j \leq c}} \binom{b-i+c-j+k-2}{k-2} \kern+2pt \vert S((b,c)/(i,j)) \vert \cdot \vert \svt((a-1,i,j),\widetilde{\rho}) \vert}, & \text{if $a > b$;}\\[22pt]
\displaystyle{\sum_{1 \leq i \leq c} \vert \svt((a,b-1,i),\widetilde{\rho}) \vert}, & \text{if $a = b > c$;}\\[22pt]
\displaystyle{\vert \svt((a,b,c-1),\widetilde{\rho}) \vert}, & \text{if $a=b=c$.}
\end{cases}$$
\end{proposition}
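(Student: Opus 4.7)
The plan is to mirror the corner-removal approach of Proposition \ref{thm: general n recurrences}, but applied to the density $\widetilde{\rho}=(k-1,1,1)$ rather than $(1,k-1,1)$. For any $T \in \svt((a,b,c),\widetilde{\rho})$ I will choose a distinguished inner corner cell---topmost when a choice exists---and partition $\svt((a,b,c),\widetilde{\rho})$ by how the smallest entry of that corner sits in the total order on $\{1,\dots,N\}$ with $N = (k-1)a + b + c$. The three cases of the statement correspond exactly to the three possible topmost-inner-corner positions in a three-row Young diagram: $(1,a)$ when $a>b$, $(2,b)$ when $a=b>c$, and $(3,c)$ when $a=b=c$.

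The two degenerate cases are essentially immediate. For $a=b=c$ the only inner corner is $(3,c)$, and its single entry must be the global maximum $N$; removing it gives the trivial bijection with $\svt((a,b,c-1),\widetilde{\rho})$. For $a=b>c$, the corner $(2,b)$ again contains a single entry $m$ (not necessarily the maximum); letting $i$ denote the number of row-$3$ entries smaller than $m$, row-standardness forces those entries into the prefix $(3,1),\dots,(3,i)$, so the cells whose labels are at most $m$ assemble into a sub-tableau of $\svt((a,b-1,i),\widetilde{\rho})$, while the labels exceeding $m$ are uniquely determined in increasing order along $(3,i+1),\dots,(3,c)$. Summing over admissible $i$ yields the recurrence.

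The substantive case is $a>b$, where the corner $(1,a)$ contains $k-1$ entries $x_1 < \cdots < x_{k-1}$. I define $i$ (resp.\ $j$) as the number of row-$2$ (resp.\ row-$3$) entries strictly less than $x_1$; row-standardness pins these entries into the prefixes $(2,1),\dots,(2,i)$ and $(3,1),\dots,(3,j)$, while the column inequalities $(2,s)<(3,s)$ force $j \leq i$, yielding exactly the parameter range $0 \leq j \leq i \leq b$, $j \leq c$. The cells whose labels lie below $x_1$ then assemble into $T_0 \in \svt((a-1,i,j),\widetilde{\rho})$ on $\{1,\dots,M\}$ with $M=(k-1)(a-1)+i+j$, and this forces $x_1 = M+1$. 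The key structural observation---and the main obstacle to justify carefully---is that $(1,a)$ shares neither a row nor, since $a > b \geq c$, a column with any cell of the skew shape $(b,c)/(i,j)$, so the only coupling between the remaining entries of $(1,a)$ and the skew-shape entries is via the total order on the $N - M - 1 = (k-2) + (b-i) + (c-j)$ leftover labels. Granted this independence, the number of completions factors as $\binom{(k-2)+(b-i)+(c-j)}{k-2}$, for choosing which leftover labels occupy the $k-2$ remaining slots of $(1,a)$, times $|S((b,c)/(i,j))|$, for arranging the rest as a standard skew tableau (standard rather than set-valued because $\widetilde{\rho}$ has density one in rows $2$ and $3$). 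Summing over valid $(i,j)$ produces the stated recurrence.
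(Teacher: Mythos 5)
Your overall strategy---deleting the unique removable corner cell in each of the three cases and stratifying by how many row-2 and row-3 entries lie below the smallest entry of that cell---is precisely the corner-removal calculus the paper appeals to (it states these recurrences as consequences of the methods of Section \ref{sec: enumeration} without writing out a proof). Your handling of the case $a>b$ is correct, including the key independence observation that the leftover labels split freely between the $k-2$ remaining entries of $(1,a)$ and a standard skew tableau of shape $(b,c)/(i,j)$, giving the factor $\binom{b-i+c-j+k-2}{k-2}\,\vert S((b,c)/(i,j))\vert$; the case $a=b=c$ is also fine, as is your silent correction of the hypothesis to $a\ge b\ge c$.

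In the case $a=b>c$, however, your closing sentence conceals a genuine mismatch. With $i$ defined as the number of third-row entries smaller than the entry $m$ at $(2,b)$, the admissible values are $0\le i\le c$, not $1\le i\le c$: nothing forces $(3,1)<m$, since $(3,1)$ and $(2,b)$ share no row or column. For example, with $k=2$ and $\lambda=(2,2,1)$ the standard tableau with rows $\lbrace 1,2\rbrace,\lbrace 3,4\rbrace,\lbrace 5\rbrace$ has $m=4<5$, so $i=0$. Thus your argument actually proves $\vert\svt((a,b,c),\widetilde{\rho})\vert=\sum_{0\le i\le c}\vert\svt((a,b-1,i),\widetilde{\rho})\vert$, which is not the displayed formula; and the displayed formula is false as printed (for $(2,2,1)$, $k=2$ it yields $\vert S((2,1,1))\vert=3$, while the true count is $5=\vert S((2,1))\vert+\vert S((2,1,1))\vert$). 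The $i=0$ term is also what reproduces the paper's own value $\widetilde{C}^k_{3,2}=\tfrac12 k^2+\tfrac32 k$ via $(2,2,2)\rightarrow(2,2,1)\rightarrow\vert\svt((2,1,0),\widetilde{\rho})\vert+\vert\svt((2,1,1),\widetilde{\rho})\vert=k+\tfrac{k^2+k}{2}$. So you must either justify $i\ge 1$ (impossible) or state explicitly that the lower limit of the printed sum should be $i=0$, i.e., your proof needs to correct the statement rather than silently identify two different sums. A minor related slip: you say the cells whose labels are ``at most $m$'' assemble into $\svt((a,b-1,i),\widetilde{\rho})$; you mean the labels strictly less than $m$, with $m$ then forced to be the next integer.
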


In the case of $\lambda = n^4$, we recognize the densities $\xi_i = (1,k-1,k-1,1)$ and $\xi_2 = (k-1,1,1,1)$ as prime candidates to obtain what should be referred to as the (involutory) four-dimensional $k$-Catalan numbers $C_{4,n}^k = \vert \svt(4^n,\xi_1) \vert$ and the non-involutory four-dimensional $k$-Catalan numbers $\widetilde{C}_{4,n}^k = \vert \svt(4^n,\xi_2) \vert$.  As the addition of a fourth row makes the techniques of Section \ref{sec: enumeration} significantly harder to apply, we simply direct the reader to Tables \ref{tab: 1,k-1,k-1,1} and Table \ref{tab: k-1,1,1,1} of Appendix \ref{sec: appendix} for all known values of $C_{4,n}^k = \vert \svt(4^n,\xi_1) \vert$ and $\widetilde{C}_{4,n}^k = \vert \svt(4^n,\xi_2) \vert$.

\appendix

\newpage

\section{Tables of Values}
\label{sec: appendix}
Values were obtained via a combination of proven results (Section \ref{sec: enumeration}, Subsection \ref{subsec: other sets}) and direct enumeration in Java.  Java coding was performed by Benjamin Levandowski of Valparaiso University and is available upon request.

\begin{table}[ht!]
\centering
\caption{Known values of $C^k_{3,n} = \vert \svt(n^3,\rho) \vert$ for $\rho=(1,k-1,1)$}
\label{tab: 1,k-1,1}
\small
\begin{tabular}{|>{$}c<{$}|>{$}c<{$} >{$}c<{$} >{$}c<{$} >{$}c<{$} >{$}c<{$} >{$}c<{$}|}
\hline
k \backslash n & 1 & 2 & 3 & 4 & 5 & 6 \\ \hline
1 & 1 & 2 & 5 & 14 & 42 & 132 \\
2 & 1 & 5 & 42 & 462 & 6006 & 87516 \\
3 & 1 & 10 & 190 & 4295 & 153415 & 5396601 \\
4 & 1 & 17 & 581 & 27461 & 1566018 & 100950800 \\
5 & 1 & 26 & 1401 & 105026 & 9511451 & \\
6 & 1 & 37 & 2890 & 315014 & 41500117 & \\
7 & 1 & 50 & 5342 & 797917 & 144067106 & \\ \hline
\end{tabular}
\end{table}

\begin{table}[ht!]
\centering
\caption{Known values of $\widetilde{C}^k_{3,n} = \vert \svt(n^3,\widetilde{\rho}) \vert$ for $\widetilde{\rho}=(k-1,1,1)$}
\label{tab: k-1,1,1}
\small
\begin{tabular}{|>{$}c<{$}|>{$}c<{$} >{$}c<{$} >{$}c<{$} >{$}c<{$} >{$}c<{$} >{$}c<{$}|}
\hline
k \backslash n & 1 & 2 & 3 & 4 & 5 & 6 \\ \hline
1 & 1 & 2 & 5 & 14 & 42 & 132 \\
2 & 1 & 5 & 42 & 462 & 6006 & 87516 \\
3 & 1 & 9 & 153 & 3579 & 101630 & 3288871 \\
4 & 1 & 14 & 396 & 15830 & 779063 & 44072801 \\
5 & 1 & 20 & 845 & 51325 & 3872370 & \\
6 & 1 & 27 & 1590 & 136234 & 14589623 & \\
7 & 1 & 35 & 2737 & 314202 & & \\ \hline
\end{tabular}
\end{table}

\begin{table}[ht!]
\centering
\caption{Known values of $\vert \svt(n^4,\xi_1) \vert$ for $\xi_1=(1,k-1,k-1,1)$}
\label{tab: 1,k-1,k-1,1}
\small
\begin{tabular}{|>{$}c<{$}|>{$}c<{$} >{$}c<{$} >{$}c<{$} >{$}c<{$} >{$}c<{$} >{$}c<{$}|}
\hline
k \backslash n & 1 & 2 & 3 & 4 & 5 & 6 \\ \hline
1 & 1 & 2 & 5 & 14 & 42 & 132 \\
2 & 1 & 14 & 462 & 24024 & 1662804 & 140229804 \\
3 & 1 & 84 & 24521 & 13074832 & & \\
4 & 1 & 460 & 960875 & 3959335892 & & \\
5 & 1 & 2380 & 31378194 & & & \\
6 & 1 & 11814 & & & & \\
7 & 1 & 57288 & & & & \\ \hline
\end{tabular}
\end{table}

\begin{table}[ht!]
\centering
\caption{Known values of $\vert \svt(n^4,\xi_2) \vert$ for $\xi_2=(k-1,1,1,1)$}
\label{tab: k-1,1,1,1}
\small
\begin{tabular}{|>{$}c<{$}|>{$}c<{$} >{$}c<{$} >{$}c<{$} >{$}c<{$} >{$}c<{$} >{$}c<{$}|}
\hline
k \backslash n & 1 & 2 & 3 & 4 & 5 & 6 \\ \hline
1 & 1 & 5 & 42 & 462 & 6006 & 87516 \\
2 & 1 & 14 & 462 & 24024 & 1662804 & 140229804 \\
3 & 1 & 28 & 2158 & 281571 & 50972547 & \\
4 & 1 & 48 & 6990 & 1798860 & 658138000 & \\
5 & 1 & 75 & 18275 & 8103935 & & \\
6 & 1 & 110 & 41382 & 28950168 & & \\
7 & 1 & 154 & 84427 & & & \\ \hline
\end{tabular}
\end{table}

\end{document}